\setlist[enumerate]{itemsep=5pt, topsep=5pt,after=\vspace{3pt}}
\setlist[itemize]{itemsep=5pt, topsep=5pt,after=\vspace{3pt}}
\newtheorem{property}{Numerically Verified Properties}
\theoremstyle{definition}
\newtheorem{remark}[theorem]{Remark}
\newtheorem{assump}[property]{Assumptions}
\newtheorem{conjecture}{Conjecture}
\renewcommand{\l}{\left}
\renewcommand{\r}{\right}
\newcommand{\mbf}{\mathbf}
\newcommand{\bz}{{\bf z}}
\newcommand{\E}{\mathbb E\,}
\newcommand{\Z}{\mathbb Z}
\newcommand{\ol}{\overline}
\newcommand{\bml}{{\bm\lambda}}
\newcommand{\calZ}{\mathcal Z}
\newcommand{\R}{\mathbb R}
\renewcommand{\L}{\mathcal L}
\newcommand{\noin}{\noindent}
\newcommand{\e}{\,\mathrm{exp}}
\newcommand{\Var}{\mathrm{Var}}
\newcommand{\Cov}{\mathrm{Cov}}
\newcommand{\Corr}{\mathrm{Corr}}
\newcommand{\loc}{{\idxsetx}}
\newcommand{\Law}{{\mathrm{Law}}}
\newcommand{\eV}{\eqref{V}}
\newcommand{\eE}{\eqref{E}}
\newcommand{\eEp}{\eqref{Ep}}
\newcommand{\eEf}{\eqref{Ef}}
\newcommand{\eEfp}{\eqref{Efp}}
\newcommand{\beq}{\begin{equation}}
\newcommand{\eeq}{\end{equation}}
\newcommand{\beqn}{\begin{equation*}}
\newcommand{\eeqn}{\end{equation*}}
\def\beqs#1\eeqs{%
    \begin{equation}\begin{split}%
    #1%
    \end{split}\end{equation}%
}
\def\beqsn#1\eeqsn{%
    \begin{equation*}\begin{split}%
    #1%
    \end{split}\end{equation*}%
}
\newcommand{\unit}{{[0,1]}}
\newcommand{\cts}{C(\unit)}
\newcommand{\idxsetx}{{N(x\pm\epsilon)}}
\newcommand{\argidxsetx}[1]{{#1\in \idxsetx}}
\newcommand{\muN}{\mu^N}%mu measure on \Z^N
\newcommand{\rhoN}{\rho^N}%rho measure on \Z^N
\newcommand{\muinfty}{\mu^\infty_K}
\newcommand{\bardlamloc}{\ol{\bm\omega}_\idxsetx}%bar of dlamloc
\newcommand{\gibbs}[2]{\rho_{#1}[#2]}%gibbs measures $\rho_K[\lambda]$
\newcommand{\mugibbs}[3]{\mu_{#1}[#2, #3]}%gibbs induced measure on W=Z-Z' when $Z\sim\gibbs K \lambda$ and $Z'\sim\gibbs K {\lambda'}$ are independent
\newcommand{\Nepslim}{N\to\infty,\,\epsilon\to0}
\newcommand{\limNeps}{\lim_{\epsilon\to0}\lim_{N\to\infty}}
\renewcommand{\v}{v}
\newcommand{\h}{h}
\newcommand{\w}{w}
\newcommand{\vN}{\mathbf v_N}
\newcommand{\wN}{\mathbf w_N}
\newcommand{\zN}{\mathbf z_N}
\newcommand{\hN}{\mathbf h_N}
\newcommand{ \vNtild}{\tilde{\mathbf v}_N}
\newcommand{\hNtild}{\tilde{\mathbf h}_N}
\newcommand{\zNtild}{\tilde{\mathbf z}_N}
\newcommand{ \wNtild}{\tilde{\mathbf w}_N}
\newcommand{\LN}{\L_N}
\title{The Local Equilibrium State of a Crystal Surface Jump Process in the Rough Scaling Regime\thanks{\funding{This material is based upon work supported by the U.S. Department of Energy, Office of Science, Office of Advanced Scientific Computing Research, Department of Energy Computational Science Graduate Fellowship under Award Number DE-FG02-97ER25308}}}
\author{Anya Katsevich\thanks{Department of Mathematics, Courant Institute of Mathematical Sciences, New York University. \email{katsevich@cims.nyu.edu}}}
\begin{document}

\normalem
\maketitle
% REQUIRED
\begin{abstract}
We investigate the local equilibrium (LE) distribution of a crystal surface jump process as it approaches its hydrodynamic (continuum) limit in a nonstandard scaling regime introduced by Marzuola and Weare. The atypical scaling leads to a local equilibrium state whose structure is novel, to the best of our knowledge. The distinguishing characteristic of the new, \emph{rough} LE state is that the ensemble average of single lattice site observables do not vary smoothly across lattice sites. We investigate numerically and analytically how the rough LE state affects the convergence mechanism via three key limits, and show that by comparison, more standard, ``smooth" LE states satisfy stronger versions of these limits. 
\end{abstract}

\begin{keywords}
Local equilibrium, rough scaling regime, crystal surface jump process
\end{keywords}

% REQUIRED
\begin{AMS}
82D25, 82C22, 82C24, 60K35, 65C40, 60J22, 60J28
\end{AMS}
%82D25 stat mech studies of crystals 82C22 Interacting particle systems in time-dependent stat mech 82C24 Interface problems; diffusion-limited aggregation in time-dependent stat mech 60K35 Interacting random processes; stat mech type models; percolation theory 65C40	Num. analysis or methods applied to Markov chains 60J22 Computational methods in Markov chains 60J28 Applications of cts-time Markov processes on discrete state spaces

\section{Introduction} 

Consider an evolving interacting particle system ${\bf w}_N(t)=(w_{1,N}(t),\dots, w_{N,N}(t))$ on a lattice, with $w_{j,N}(t)$ particles at lattice site $j$.  The system evolves through particles randomly jumping from site to site with certain jump rates, which are functions of the local particle configuration. Now imagine these interactions occur on the unit interval, with lattice site $j$ corresponding to the interval $[j/N, (j+1)/N)$. If we also speed up time by an appropriate power of $N$ and take $N\to\infty$, we can hope to get a \emph{hydrodynamic limit}: a function $\w(t,x)$ with a nontrivial macroscopic dynamics, governed by a PDE, emerging from the microscopic dynamics as $N\to\infty$. The hydrodynamic limit problem is to prove that a microscopic process ${\bf w}_N(t)$ has a macroscopic limit $w(t,x)$ under a certain scaling of space and time, and to determine the PDE governing this macroscopic limit. 
 
The two leading proof methods which have been successfully adapted to a variety of rigorous hydrodynamic limit proofs, are the entropy production approach~\cite{varadhanI} and the relative entropy approach~\cite{Yau1991} . For a review of these approaches and their application to proving hydodynamic limits of jump processes (our interest here), see~\cite{kipnisbook}. Both methods quantify in some sense the proximity of the distribution of the microscopic process to a certain local equilibrium (LE) state. Generally speaking, this is the key ingredient in the analysis: showing that following an initial burn-in time, the distribution of the microscopic process adheres closely to a certain LE state when $N\gg1$. The LE state is the fundamental object which enables macroscopic motion to emerge out of the large number of microscopic interactions.

Here, we study the hydrodynamic limit of an interacting particle system (IPS) modeling crystal surface relaxation in a nonstandard scaling regime. We will show that the IPS has a wholly novel kind of LE state, which compels us to rethink the convergence mechanism of this system to its macroscopic limit. Our goal is to describe this new kind of LE state and to explain the implications it has for convergence to the macroscopic limit. To achieve this goal, we will study the convergence problem through the lens of three limits. The first two limits are necessary for the convergence of $\wN$ to $w$ in a sense that is physically intuitive and related to the kind of convergence shown in rigorous hydrodynamic limit proofs. The third limit is a key ingredient in deriving the PDE governing the limit $w$. Understanding how the limits follow from properties of the LE state will give us valuable intuition.

For concreteness, let us informally describe these limits, which we call~\eV,~\eE, and~\eEf, condensing them into two. Let $w_i = w_{i,N}(N^\alpha t)$ for an appropriate time scaling $N^\alpha$. The limits~\eqref{V} and~\eqref{E} express that a mesoscopic average of the $w_i$, over $i$ such that $i/N\in B_\epsilon(x)=(x-\epsilon,x+\epsilon)$, converges to $\w(t,x)$, for a continuous function $w:\unit\to\R$.
\beqn 
\text{(V, E):}\quad \limNeps \frac{1}{2N\epsilon}\sum_{\frac iN\in B_\epsilon(x)} w_i = \w(t,x).
\eeqn
The limit~\eqref{Ef} states that there is a function $\hat r$, independent of $t$ and $x$, such that
%Let $\bar v^{N,\epsilon}_x$ be the mesoscopic average on the left in~\eqref{intro-1}. Using the microscopic dynamics, we can write the evolution of $\bar v^{N,\epsilon}_x$ as follows:
%$$d\bar v^{N,\epsilon}_x = \mathcal D_\epsilon\bigg(\frac{1}{2N\epsilon}\sum_{\argidxsetx i} r(v_i)\bigg)dt + o_{N,\epsilon}(1).$$ Here, $\mathcal D_\epsilon$ is some finite difference operator and $r$ is a rate function. We can think of the expression in parentheses as an average rate to jump outside of the mesoscopic window $i\in NB_\epsilon(x)$.  We now need to close this equation by writing the expression in parentheses as a function of $\bar v^{N,\epsilon}_x$. In other words, we must find a function $\hat r$ such that
\beqn%\label{intro-2}
 \text{(Ef)} \quad\frac{1}{2N\epsilon}\sum_{\frac iN\in B_\epsilon(x)} r(w_i)\stackrel{N,\epsilon}{\approx} \hat r\bigg(\frac{1}{2N\epsilon}\sum_{\frac iN\in B_\epsilon(x)}w_i\bigg),
\eeqn
i.e. the difference between the left- and righthand sides goes to zero as $N\to\infty$ and then $\epsilon\to0$. The terms $r(w_i)$ are the jump rates at site $i$, and we can think of the expression on the left as an average jump rate in the region $|i-Nx|\leq N\epsilon$. To motivate studying this third limit~\eqref{Ef}, we show informally that given~\eqref{V} and~\eqref{E}, the PDE ``nearly" follows from the additional limit~\eqref{Ef}.

If the distribution of $\wN$ is given exactly by a prototypical LE state (such as a slowly varying local Gibbs distribution), then the expectations $\E[w_i]$ vary smoothly over lattice sites $i$, and the marginals $\Law(w_i)$ belong to a family of measures which can be parameterized by their means. Thanks to these and a few other properties,~\eV,~\eE, and~\eEf~are easily satisfied. In fact,~\eV~and stronger limits~\eEp~and \eEfp~hold for the prototypical LE. Although for finite $N$, $\Law(\wN)$ is never exactly a prototypical LE state, we show numerically for a simple, zero range process that $\Law(\wN)$ does satisfy the stronger limits. We should note that investigation of standard LE states is a secondary focus; the numerical results are shown primarily for the purpose of contrasting the standard LE to that of our crystal surface process. 

The essential difference is that in our case the $\Law(w_i)$ do \emph{not} belong to a mean-parameterized measure family, and $\E w_i$ do \emph{not} vary smoothly with $i$, nor do most observables $\E f(w_i)$. It would be unsurprising if the random $w_i$ varied roughly (a term we will define formally), but the rough variation of expectations is surprising and novel. The expectations $\E w_i$ are rough even if the initial condition $\wN(0)$ is chosen so that $\E w_i(0) = w_0(i/N)$ for a smooth function $w_0(x)$. 

Our first main contribution is \textbf{to show numerically that due to the rough behavior, the stronger limits are not satisfied, but despite the rough behavior,~\eV,~\eE, and~\eEf~are still satisfied.} We call this new kind of LE a ``rough" LE state. Our second main contribution is theoretical. Using the numerically verified~\eE, and the additional numerically verified fact that $\Law(\wN)$ is induced by a local Gibbs measure, \textbf{we rigorously prove that mesoscopic averages of the distributions $\Law(w_i)$ are measures which do belong to a certain mean-parameterized family, unlike the $\Law(w_i)$ themselves.} This gives valuable insight about why~\eEf~holds for a rough LE state. 

We now give some background on the crystal surface process. Let ${\bf h}_N(t)=(h_{1,N}(t), \dots, h_{N,N}(t))$, where $h_{i,N}$ is the (discrete) height of a column, or number of particles, at site $i$. The process evolves via jumps between neighboring lattice sites which tend to lower the surface energy on average. These jumps occur with so-called ``Arrhenius" transition rates. For a review of the physics of crystal surfaces, including the Arrhenius rates, see~\cite{zangwill1988physics}. The process $\wN(t)$ studied here tracks the ``curvature" of this crystal surface height profile, or more precisely, the second order finite differences $w_{i,N} = h_{i+1,N}-2h_{i,N} + h_{i-1,N}$. This curvature process is expected to have a nontrivial hydrodynamic limit provided ${\bf h}_N$ is scaled to have order $O(N^2)$ heights (see Definition~\ref{def:init},~\ref{def:hydro} for precise statements). Marzuola and Weare introduced this scaling in~\cite{mw-krug}, naming it the ``rough" scaling regime for reasons independent of the rough LE discovered here. The authors use probabilistic but heuristic arguments to show that the hydrodynamic limit of $N^{-2}{\bf h}_N$ is a fourth order diffusion with exponential nonlinearity. Linearizing this exponential gives the PDE limit of the height process in the more standard, \emph{smooth} scaling regime, first derived by Krug, Dobbs, and Majaniemi using physical arguments~\cite{krug1995adatom}. Recently, the rough scaling limit of an Arrhenius rate process with additional deposition and evaporation has also been derived~\cite{gao2020_arrPDE}, yielding a nonlinear fourth order PDE with a second order correction.

So far, none of these hydrodynamic limits have been proven rigorously, due in part to technical obstacles posed by the discrete heights. On the other hand, there are several continuous height variants of the crystal surface model for which rigorous hydrodynamic limits have been carried out. In~\cite{funaki1997motion}, Funaki and Spohn prove the second order hydrodynamic limit of the Ginzburg-Landau $\nabla\phi$ interface model without conservation of mass. Using our notation, this means $\sum_ih_{i,N}(t)$ can vary. In~\cite{Nishikawa}, Nishikawa proves the fourth order limit of the analogous dynamics with $\sum_ih_{i,N}(t)$ conserved. In~\cite{savu}, Savu derives the hydrodynamic limit of a ``nongradient" form of the continuous height model. 

%This microscopic system models crystal surface relaxation. The crystal is represented by an integer height profile $\mbf h^N=(h_1,\dots, h_N)$, where $h_i$ is the height at site $i$ of a periodic lattice. The absolute height is irrelevant; rather, the dynamics plays out on the ``surface'' of the height profile, which is best represented by the slope profile $\mbf z^N = (z_1, \dots, z_N)$, where $z_i = h_{i+1}-h_i$. The dynamics evolves through particle jumps between neighboring lattice sites, which occur at specified transition, or jump rates. In this paper, we will focus on a process with so-called ``Arrhenius'' jump rates in the rough scaling regime. However, the unusual LE structure is not limited to the Arrhenius jump rate process. Correspondingly, before diving into the specifics of the Arrhenius model, we present this phenomenon, and the setting in which it arises, in a general fashion. In the conclusion, we mention a second process with Metropolis-type jump rates, that has the same kind of LE structure. This is the subject of an upcoming paper~\cite{metrop_pde}.
 
The dynamics of the $\wN$ curvature process can be formulated independently of the $\hN$ height process, so that studying the hydrodynamic limit of $\wN$ independently of $\hN$ is legitimate. In some sense in fact, the $\hN$ limit is the one which depends on the $\wN$ limit. %Namely, we will explain how $\wN$'s rough LE state leaves its fingerprints on the PDE governing the limit of $N^{-2}\hN$. 
As we argue in Section~\ref{subsec:arr-discush}, the rough LE state of $\wN$ explains an interesting difference noted in~\cite{mw-krug} between the rough and smooth PDE limits of the crystal surface height process: the differential operator of the smooth scaling PDE involves a function which ``sees'' the discreteness of the microscopic system, while the rough scaling differential operator involves an analogous function which does not. \emph{These and other insights significantly expand the current understanding of the convergence mechanism for the crystal surface process under rough scaling.} In particular, we correct some of the claims in~\cite{mw-krug} used to derive the PDE limit of the height process.

Finally, we note that this new, ``rough" LE is not limited to the curvature process of the Arrhenius rate crystal surface dynamics in the rough scaling regime. In the conclusion, we mention a second process with Metropolis-type jump rates, that has the same kind of LE structure. This is the subject of an upcoming paper~\cite{metrop_pde}.

\subsection{Organization} In Section~\ref{sec:model}, we introduce the crystal surface model, define the notions of scaling regime and hydrodynamic limit, and review existing results on the crystal PDE limits. In Section~\ref{sec:reduc}, we explain our focus on three key limits. We then review the structure of standard LE states in Section~\ref{sec:smooth-LE}, and give numerical evidence for the hypothesis that they satisfy the stronger limits~\eEp~and~\eEfp. Section~\ref{sec:rough-LE} contains our numerical case study of $\wN$'s rough LE state. In Section~\ref{sec:theory}, we first state and numerically verify the explicit form of $\Law(w_i)$. We then use this form, in addition to the numerically confirmed~\eE, to rigorously prove that~\eEf~holds thanks to mesoscopic averaging of $\Law(w_i)$. We make some concluding remarks in Section~\ref{sec:conclude}. The proofs and details of numerical simulations can be found in the appendix.

For maximal clarity, let us summarize the type of result (informal, numerical, or rigorous) presented in each section. Section~\ref{sec:model} is a review which includes informal claims. The derivation of the PDE in Section~\ref{sec:reduc} is also non-rigorous. It is intended only to motivate our study of~\eEf~in addition to~\eV~and~\eE. All claims made in Sections~\ref{sec:smooth-LE} and~\ref{sec:rough-LE} are confirmed numerically. All statements in Sections~\ref{subsec:omega-tilde-w} and~\ref{subsec:marg-av} are proven rigorously, using numerically verified assumptions stated explicitly in Section~\ref{subsec:arr-prop}. Section~\ref{subsec:arr-discush} is again informal. 

\subsection{Notation}
Given a sequence of vectors $\mbf w_N\in\Z^N$, $N=1,2,\dots$, we will suppress the dependence of the entries of $\mbf w_N$ on $N$, writing $\mbf w_N = (w_1,\dots, w_N)$. For a probability mass function (pmf) $\mu$ over the integers, we write $\mu(f)$ to denote $\sum_nf(n)\mu(n)$, with the exception of $f(n)=n$. Instead we denote the first moment of $\mu$ by $m_1(\mu)$; that is, $m_1(\mu) =\sum_n n\mu(n)$. We will denote the parameter $\lambda$ of a pmf in a parameterized family using square brackets:  e.g. $\mu[\lambda]\in \{\mu[\lambda]\mid\lambda\in\R\}$. The probability of $n$ under $\mu[\lambda]$ is then $\mu[\lambda](n)$. We let $\unit$ denote the unit torus, and write $i\in\idxsetx$ to denote an integer $i$ such that $|i-Nx|<N\epsilon$. Also, $\cts$ is the space of continuous functions on the unit torus. Further notation will be introduced as needed. 

\section*{Acknowledgments} I would like to thank Jonathan Weare for his guidance, patience, and insight throughout the research and writing process. I would also like to thank Jeremy Marzuola, who introduced me to the crystal surface problem and with whom I have had many useful discussions throughout the years. Finally, I am grateful for the computing resources that NYU High Performance Computing made available to me, and for the support of the DOE CSGF.

\section{Microscopic Model and Hydrodynamic Limit}\label{sec:model} In~\ref{subsec:dynamics}, we introduce the crystal surface height, slope, and curvature process, describe the generators of these processes, and the invariant measure of the slope process. In~\ref{sec:hydro}, we define the notions of scaling regime and hydrodynamic limit, and review existing results on the PDE limits of the smooth and rough scaled crystal surface process.

\subsection{Microscopic Model}\label{subsec:dynamics} Consider a crystal surface as an arrangement of particles on a periodic lattice, as shown schematically in Figure~\ref{fig:schematic}. We represent the crystal surface height profile by a Markov jump process $(\hNtild(t))_{t\geq0} = (\tilde h_1(t),\dots, \tilde h_N(t))_{t\geq0}$ on $\Z^N$. If $\tilde h_i\geq 0$ then $\tilde h_i$ represents the number of particles stacked above level zero at lattice site $i$, and if $\tilde h_i<0$ then $|\tilde h_i|$ represents the number of particles ``missing" below level zero at lattice site $i$, shown as black dotted circles in the figure. Since the lattice is periodic, we will use $\bmod$ $N$ indexing throughout. 

The surface evolves through particle jumps between neighboring columns. Suppose a particle at the top of column $i$ jumps to a neighboring column $j=i\pm 1$, as depicted in the figure (the red dotted circles denote the possible locations of the particle post-jump). If the particle configuration pre-jump is $h$, then the particle configuration post-jump is $h^{i,j}$ defined as
\beq
(h^{i,j})_k = \begin{cases}
h_i-1,\quad &k=i,\\
h_j+1,\quad &k=j,\\
h_k,\quad&\text{otherwise.}
\end{cases}
\eeq

%Particles at the tops of the columns can jump to the top of one of the neighboring columns. If the crystal surface had configuration $h$, then after a jump from site $i$ to site $j=i\pm1$, it has configuration $T^{i,j}h$, defined by $(T^{i,j}h)_i = h_i-1$, $(T^{i,j}h)_j = h_j+1$, and $(T^{i,j}h)_k=h_k$, $k\neq i,j$. 

\begin{wrapfigure}{r}{0.5\textwidth}
%\vspace{-0.3cm}
  \begin{center}
    \includegraphics[width=0.5\textwidth]{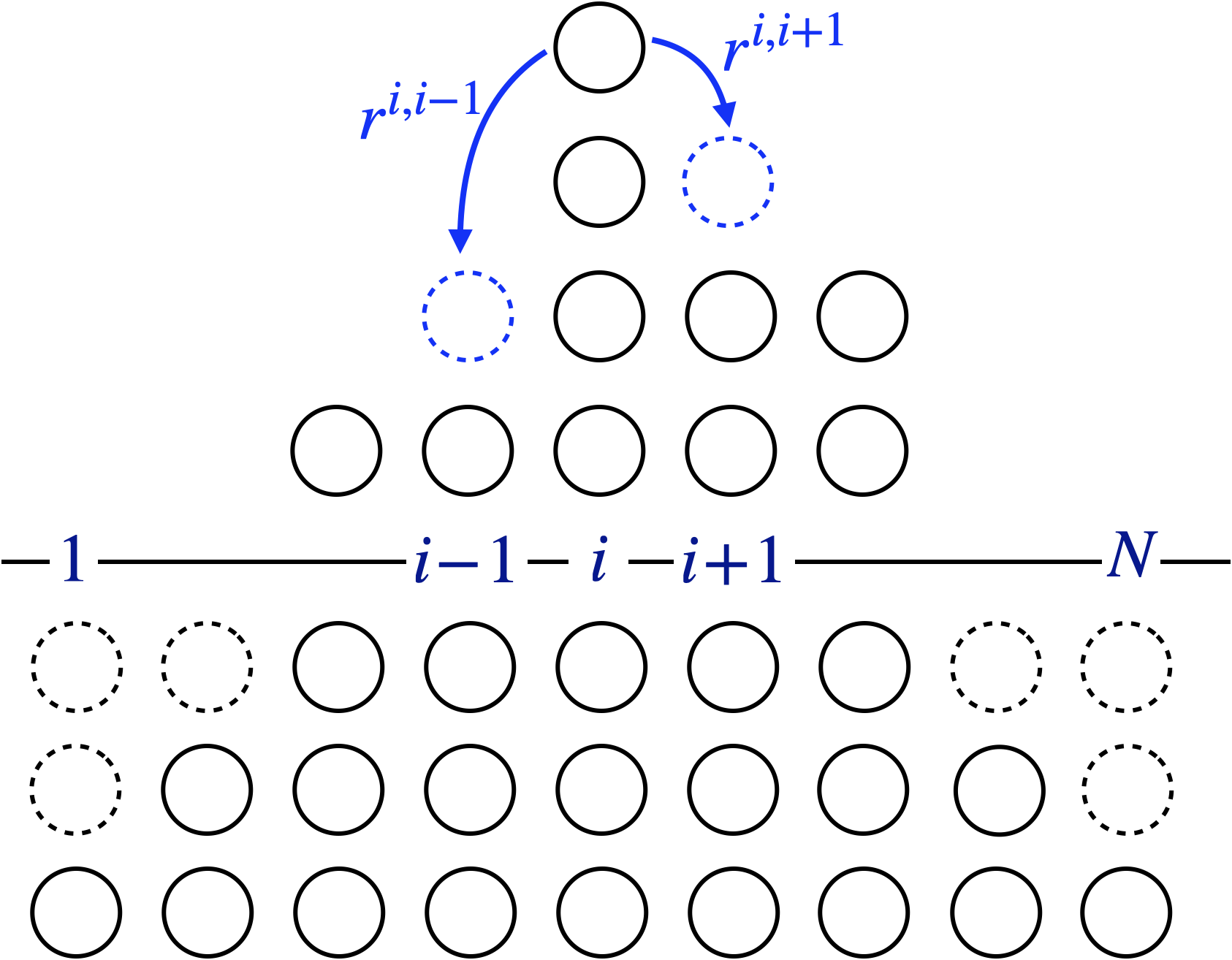}
  \end{center}
 \caption{Schematic of crystal surface height process $\hNtild$.}
 \label{fig:schematic}
 \vspace{-0.4cm}
\end{wrapfigure}

These transitions $h\mapsto h^{i,j}$ are the only transitions which occur in the dynamics. The slope process is given by $\zNtild(t) = (\tilde z_1(t),\dots, \tilde z_N(t))$, where $\tilde z_i = \tilde h_{i+1}-\tilde h_i$. In this paper, we study the ``curvature" process $\wNtild(t) = (\tilde w_1(t),\dots, \tilde w_N(t)),$ where $$\tilde w_i = \tilde z_i-\tilde z_{i-1}= \tilde h_{i+1}-2\tilde h_i+\tilde h_{i-1}.$$ 

If $\mbf h$ jumps to $\mbf h^{i,j}$, then we write that $\mbf z$ jumps to $\mbf z^{i,j}$, and $\mbf w$ jumps to $\mbf w^{i,j}$. Here, $\mbf z^{i,j}$ ($\mbf w^{i,j}$) denotes the slope profile (curvature profile) induced by $\mbf h^{i,j}$, where $\mbf z$ is the slope of $\mbf h$ ($\mbf w$ is the ``curvature" of $\mbf h$). It is straightforward to see that $\mbf z^{i,j}$ and $\mbf w^{i,j}$ depend only on $\mbf z$ and $\mbf w$, respectively. For example, 
\beq\label{w-jump}(w^{i,i+1})_k = 
\begin{cases}
w_{k}-1,\quad &k=i-1,\\
w_k+3,\quad &k=i,\\
w_k-3,\quad &k=i+1,\\
w_k+1, \quad &k=i+2,\\
w_k,\quad &\text{otherwise,}
\end{cases}
\quad (w^{i+1,i})_k = 
\begin{cases}
w_{k}+1,\quad &k=i-1,\\
w_k-3,\quad &k=i,\\
w_k+3,\quad &k=i+1,\\
w_k-1, \quad &k=i+2,\\
w_k,\quad&\text{otherwise.}
\end{cases}
\eeq
Now, to each height profile we associate an energy, or Hamiltonian, that depends only on the corresponding slope profile,
$$H(\mbf h) = H(\mbf z) = \sum_{i=1}^Nz_i^2.$$ The quadratic potential is a modeling choice that makes the subsequent analysis easier. Another common choice is the absolute-value potential. Let us now define the Arrhenius transition rates. The rate of a jump from $i$ to $i+1$ is equal to the rate of a jump from $i$ to $i-1$, and is given by
\beq\label{arr}r_i(\mbf h) =\e\l(-K\l[H(J_i\mbf h) - H(\mbf h)\r]\r),\eeq where $K$ is a system parameter which can be thought of as inverse temperature, and $J_i\mbf h$ is obtained from $\mbf h$ by decreasing the height at site $i$ by 1. The physical intuition underlying these rates is that in order for a particle at site $i$ to jump to a neighboring column $j=i\pm 1$, it first breaks the bonds holding it in place, and then jumps left or right with equal probability. The amount of energy needed to break the bonds is the energy change induced by removing the particle, i.e. $H(J_i\mbf h) - H(\mbf h)$. Note that $\mbf h\mapsto J_i\mbf h$ is not itself an allowable transition under the dynamics. Define \beq\label{r-of-w} r(w)=e^{-2K-2Kw}.\eeq  Working out the transition rates explicitly by expanding the Hamiltonians, we get $$r_i(\mbf h) = e^{-2K-2K(z_i-z_{i-1})} = r(w_i).$$ 
%If the initial condition $\wNtild(0)$ of $\wNtild(t)$ happens to be the curvature of some height profile $\hNtild(0)$, then we can recover both $\hNtild(t)$ and $\zNtild(t)$ from $\wNtild(t)$. Indeed, by taking cumulative sums of $\tilde w_i$ we recover $\tilde z_i$ up to a constant, whose value is determined by the condition $\sum_{i=1}^N\tilde z_i=0$, since $\hNtild$ is periodic. Taking cumulative sums of $\tilde z_i$, we recover $\tilde h_i$ up to a constant, whose value is determined by the condition $\sum_{i=1}^N\tilde h_i(t) = \sum_{i=1}^N\tilde h_i(0)$.

%\subsection{Generator and Evolution of Measures}\label{subsec:genz}
Since the jumps $\mbf h\mapsto \mbf h^{i,i\pm1}$ occur with rates $r_i(\mbf h)=r(w_i)$,  depending only on the curvatures $w_i$, it follows that $\zNtild(t)$ and $\wNtild(t)$ are both also Markov jump processes which can be defined independently of $\hNtild(t)$. For example the dynamics of $\wNtild(t)$ is given by the Markov jump process with jumps $\mbf w\mapsto \mbf w^{i,i\pm1}$ occurring at rates $r(w_i)$, $i=1,\dots, N$. More formally, the dynamics of $\wNtild$ is given by the generator
%Abusing notation, we let $\LN$ denote the generator of each of the $\hNtild$, $\zNtild$, and $\wNtild$ processes, respectively. The generators all take the same form, varying only in their definitions of the transition operator $(\cdot)^{i,j}$. For the $\wNtild$ process, we have
\beqs\label{w-gen}(\LN f)(\mbf w) &=\sum_{i=1}^Nr(w_i)\l(\l[f(\mbf w^{i,i+1})-f(\mbf w)\r]+\l[f(\mbf w^{i,i-1})-f(\mbf w)\r]\r).\eeqs For example, let $\pi_j:(w_1,\dots, w_N)\mapsto w_j$ denote the projection onto the $j$th coordinate. Recalling the transition operators $\mbf w\mapsto \mbf w^{i,j}$ in~\eqref{w-jump}, we compute that
\beq\label{L-pi-w}
(\LN\pi_j)(\mbf w) = r(w_{j-2})-4r(w_{j-1}) +6r(w_j)- 4r(w_{j+1}) + r(w_{j+2}).
\eeq

The generator determines the evolution of an initial measure. For example, if $\wNtild(0) \sim \muN_0$ then 
$$\wNtild(t) \sim\muN_0 e^{t\LN},$$ where multiplication on the right denotes application of the adjoint operator. The process $\zNtild$ has the special property --- not shared by $\hNtild$ and $\wNtild$ --- that it is invariant with respect to a product measure, specifically the Gibbs measure
\beq\label{Phi}\Phi_N(\mbf z) =\prod_{i=1}^N\frac{\e\l(-Kz_i^2\r)}{\calZ} = \frac{e^{-KH(\mbf z)}}{\calZ^N},\quad \calZ = \sum_{n\in \Z}\e\l(-Kn^2\r).\eeq The invariance of $\Phi_N$ under the $\zNtild$ dynamics is a consequence of detailed balance:
$$r_{i}(\mbf z)\Phi_N(\mbf z) = r_{i+1}(\mbf z^{i,i+1})\Phi_N(\mbf z^{i,i+1})\quad\forall i=1,\dots,N,\,\forall \mbf z\in\Z^N.$$ That the Arrhenius rates satisfy detailed balance is straightforward to see from their original formulation~\eqref{arr} in terms of Hamiltonians. Also, one can show that the transitions $\mbf z\mapsto \mbf z^{i,j}$ do not change the values of $S_0(\mbf z)=\sum_kz_k$ and $S_1(\mbf z)=\sum_kkz_k$, so the ergodic measures are the restrictions of $\Phi_N$ to $(S_0,S_1)$ level sets.

\subsection{Hydrodynamic Limit}\label{sec:hydro}
Let us take a step back to discuss the connected notions of scaling regime and hydrodynamic limit for a sequence of microscopic processes $ \vNtild$ defined on $\Z^N$. The letter $v$ will always denote a generic process. 

The idea of a hydrodynamic limit is that, if we zoom out from a microscopic process, a macroscopic dynamics will emerge. How to zoom out depends on several characteristic scales of the microscopic process. For stochastic lattice gases such as the crystal surface model, one of these scales is spatial: the lattice width $N$. A second scale to consider is temporal: as $N$ increases, it takes longer for local changes to have a global effect on the stochastic lattice gas. We therefore associate to macroscopic times $\Delta t$ the microscopic time $N^\alpha\Delta t$, for some $\alpha>0$. A third characteristic scale, relevant to unbounded microscopic processes such as the crystal surface, is the amplitude of the process itself. For example, consider the slope process $\zNtild$ associated to a crystal surface height process. The ``smoothness" of the crystal can be measured by the magnitude $N^\beta$ of these slopes. Rougher crystals have larger $\beta$. The transition rates will apply a greater force (so to speak) to smoothen rougher surfaces. 

To obtain a hydrodynamic limit, we must first rescale the process $ \vN$. Let $$\vN(t) = \vNtild(N^\alpha t).$$ The spatial rescaling occurs by interpreting $\vN(t)$ as a random step function on the unit interval, or more formally, a random measure:
\beq\label{empirical}v_N(t,dx) = \frac1N\sum_{i=1}^N v_{i}(t)\delta\l(x-\frac iN\r).\eeq Finally, we encode the amplitude scaling into the $ \vN$ initial condition, and into the definition of hydrodynamic limit. 
%In this paper, we are primarily interested in processes with no amplitude scaling ($\beta=0$). We will therefore l
%To obtain a nontrivial hydrodynamic limit for $\vN$ we need to ensure $ \vN$ is initialized according to an appropriate distribution.%\comment{To Do: deal with the fact you copied this near-verbatim from Claude+Claudio.}
\begin{definition}\label{def:init}
We say the sequence of measures $\muN_0$ on $\Z^N$ is associated with a profile $\v_0(x)$ under amplitude scaling $N^\beta$ if
\beq\label{v-init}\muN_0\l((v_1,\dots, v_N)\; : \; \l|\frac1N\sum_{i=1}^N\phi\l(\frac iN\r)\l[N^{-\beta}v_i\r] -\int_\unit\phi(x)\v_0(x)dx\r|>\delta\r)\to0\eeq as $N\to\infty$ for all $\phi\in \cts$ and $\delta>0$. 
\end{definition}
%Before presenting it, recall the following notation:
%We can now think of the $ \vN$, $N=1,2,\dots$ as random processes over the same space for all $N$; namely, the space $\mathcal M$ of signed measures on the periodic unit interval. 
%for a function $\phi\in\bump$ and a signed measure $\mu\in\mathcal M$ we let $\la \phi,\mu\ra := \int_0^1\phi d\mu$. For an $L^1$ function $\psi$, we will write $\la \phi,\psi\ra$ to denote $\la \phi,\psi dx\ra$. For our rescaled process $ \vN$, we have
 %$$\la\phi, \tilde v^N(t)\ra = \frac1N\sum_{i=1}^N\phi\l(\frac iN\r) v_i(t).$$
As an example, the product measure $\muN_0$ with marginals $$v_i\;\sim\;\lfloor N^\beta v_0(i/N)\rfloor + \mathrm{Bernoulli}(1-\{N^\beta v_0(i/N)\})$$ satisfies~\eqref{v-init}. Here, $\{q\}=q-\lfloor q\rfloor $ denotes the fractional part of $q$.
%$$\muN_0(v_i=\lfloor\mathscr{v}_{i,N}\rfloor) = \{\mathscr{v}_{i,N}\}, \quad \muN_0(v_i=\lceil\mathscr{v}_{i,N}\rceil) = 1-\{\mathscr{v}_{i,N}\},$$
\begin{definition}[Hydrodynamic Limit]\label{def:hydro} Let $ \vNtild$ be a sequence of random processes on $\Z^N$. %Assume $ \vNtild(0)\sim\muN_0$, $N=1,2,\dots$ where $\muN_0$ is associated to a smooth profile $\v_0$ under amplitude scaling $N^\beta$. 
We say the sequence $\vN(t)= \vNtild(N^\alpha t)$ has a \emph{hydrodynamic limit} $\v:[0,T]\times\unit\to\R$ under amplitude scaling $N^{\beta}$ if for each $t\in[0,T]$, $\phi\in \cts$, and $\delta>0$ we have
\beq\label{hydro-lim}\mathbb P\l(\l|\frac1N\sum_{i=1}^N\phi\l(\frac iN\r)\l[N^{-\beta}v_i(t)\r] -\int_\unit\phi(u)\v(t,u)du\r|>\delta\r)\to0,\quad N\to\infty.\eeq \end{definition} In~\eqref{hydro-lim}, the probability distribution on $\vN(t)$ is given by
$$\mathbb P(\vN(t) \in A) = \mathbb P( \vNtild(N^\alpha t) \in A) = \muN_{t}(A),$$ where $\muN_t := \muN_0\e(N^\alpha t\LN).$
%\beq\label{v-init}
%N^{-\beta}\la\phi, v^N(0)\ra\to \la\phi, \v_0\ra\quad\text{in probability as }N\to\infty,\;\forall\phi\in\cts\eeq 
Given $\alpha,\beta$, and a sequence of processes $\vNtild(t)$ with generators $\LN$, the main goals are (1) to prove that $\vN(t)=\vNtild(N^\alpha t)$ has a hydrodynamic limit $\v$ under amplitude scaling $N^\beta$, and (2) to determine the PDE governing the evolution of $\v$. Typically, these goals are intertwined and achieved together in rigorous proofs. Of course, we only expect there to be a nontrivial PDE limit if $\vNtild$ is initialized appropriately, i.e. if $\vNtild(0)\sim\muN_0$ is associated to a smooth $\v_0$ under amplitude scaling $N^\beta$. The function $\v_0$ is then the initial condition of the PDE.

Let us now return to the crystal surface models. In~\cite{krug1995adatom} and~\cite{mw-krug}, the authors study the Arrhenius rate discrete height process $\hNtild$ in the \emph{smooth} scaling regime (the name was coined in the latter paper). In this regime, one takes the hydrodynamic limit of $\hN(t) = \hNtild(N^4t)$ under amplitude scaling $N^1$. Note that the amplitude scaling has a nonlinear effect on the dynamics, i.e. $(\LN\pi_i)(Nh) \neq N(\LN\pi_i)(h)$ since the rates are exponential in $w_i = h_{i+1}-2h_i + h_{i-1}$. These papers show using nonrigorous arguments that in the smooth scaling regime, the hydrodynamic limit $\h$ solves 
\beq\label{smooth-h-PDE}\partial_t\h = -\partial_{xxx}\lambda_D(\partial_x\h).\eeq Here, $\lambda_D(u)=u+\lambda_o(u)$, where $\lambda_o$ is a smooth periodic perturbation with period one (the function $\lambda_D$ is denoted $\sigma_D$ in~\cite{mw-krug}). We will discuss this function and the smooth scaling PDE in Section~\ref{sec:theory}. Marzuola and Weare also propose a non-standard, ``rough" scaling regime in~\cite{mw-krug}, which is our interest here. In the rough scaling regime, one takes the hydrodynamic limit of $\hN(t) = \hNtild(N^4t)$ under amplitude scaling $N^2$. Through a heuristic argument supported by numerical simulations~\cite{mw-krug}, the authors conclude that the rough scaling limit $h$ of $\hN$ solves
\beq\label{PDE}
\begin{cases}
\h_t = \partial_{xx}\,e^{-2K\h_{xx}},\quad &t>0,\\
\h(0,x) =\h_0(x),\quad &x\in\unit.
\end{cases}
\eeq Existence, uniqueness, and regularity of solutions to this PDE and variations of it have been studied e.g. in~\cite{liu_arrPDE, gao2019_arrPDE, liu2018global, granero2018global, ambrose2019radius}. Liu and Xu~\cite{liu_arrPDE}, and Gao, Liu, and Lu~\cite{gao2019_arrPDE} show that singularities can form in $h_{xx}(t,\cdot)$ even when $h_0$ is smooth. However, Ambrose proves that if $\h_0$ is analytic, and if $\|\h_0''\|_{\mathbb A}$ is sufficiently small, then there exists a strong, analytic solution $h(t,x)$ to~\eqref{PDE} with growing radius of analyticity~\cite{ambrose2019radius}. Here, $\|u\|_{\mathbb A} = \sum_{k\in\Z}|\hat u(k)|$, where $\hat u(k)$ is the $k$th Fourier coefficient of $\hat u$. %This is the unique analytic solution by an argument in~\cite{liu_arrPDE}.%\comment{For references on PDE results to cite here, see page 4 of https://iopscience.iop.org/article/10.1088/1361-6544/ab853d/pdf}%For the meaning of weak in this context, and the regularity of weak solutions to this PDE, see~\cite{cite}.

In this paper, we study the hydrodynamic limit of $\wN(t)=\wNtild(N^4t)$ under amplitude scaling $N^0$. As discussed, this process can be defined independently of $\hNtild(t)$. This scaling corresponds to the rough scaling regime for $\hNtild$. 
\begin{remark}\label{remark:hz-from-w}Note that if $\hN(t) = \hNtild(N^4t)$ has a hydrodynamic limit under amplitude scaling $N^2$, this does not imply that $\wN(t)$ has a hydrodynamic limit under amplitude scaling $N^0=1$. Meanwhile, the converse is likely true under some technical assumptions. This is due to the fact that if $\phi\in C(\unit)$ then $\phi''$ need not belong to $C(\unit)$ (or even exist), while there is always a periodic $\Phi\in C(\unit)$ such that $\Phi''=\phi$. In general, we expect the hydrodynamic limit of an antiderivative to follow from the hydrodynamic limit of the derivative, not vice versa.\end{remark}
%the hydrodynamic limit of a process $\wNtild(t)$ given by the second order finite difference of some $\hNtild(t)$, 
%Nevertheless, even in the case where $\wNtild(t)$ \emph{is} given by the second order finite difference of some $\hNtild(t)$, the hydrodynamic limit of $\wN(t)=\wNtild(N^4t)$ under amplitude scaling $N^0$ does not follow from the hydrodynamic limit of $\hN(t) = \hNtild(N^4t)$ under amplitude scaling $N^2$. Indeed, fix a continuous $\phi$ and consider the sum $\frac1N\sum_i\phi(i/N)[N^0w_i(t)]$. We would like to substitute $w_i(t)$ by $(h_{i+1}-2h_i + h_{i-1})(t)$ and move the two finite differences onto $\phi$ to take advantage of $\hN$'s hydrodynamic convergence under amplitude scaling $N^2$. But we cannot do this because $\phi$ is only continuous. 
The above remark shows we cannot rigorously deduce the limit of $\wN$ from the limit of $\hN$. Nevertheless, based on our knowledge of the hydrodynamic limit of $\hN$, we anticipate that the hydrodynamic limit of $\wN(t) =  \wNtild(N^4t)$ is the weak solution to
\beq\label{PDE-w}
\begin{cases}
\w_t = \partial_{xxxx}\,e^{-2K\w},\quad &t>0,\\
\w(0,x) =\w_0(x),\quad &x\in \unit
\end{cases}
\eeq 
with periodic boundary. For future reference, we define
\beq \hat r(w) = e^{-2Kw},\eeq the nonlinear function in the PDE~\eqref{PDE-w}. 

In our numerical simulations, we will always take $w_0 = h_0''$ for some analytic $h_0$ with $\|w_0\|_{\mathbb A}$ small, so that we can use the analyticity result of~\cite{ambrose2019radius} to conclude that there is an analytic, strong solution $h(t,x)$ to~\eqref{PDE} and hence $\partial_{xx}h(t,x)$ is an analytic strong solution to~\eqref{PDE-w}. Although other non-analytic weak solutions to~\eqref{PDE-w} could exist, we find numerically that $\wN(t)$ always converges to a smooth function $w(t,\cdot)$. 

%One can then show that any two analytic strong solutions of~\eqref{PDE-w} must be identical by constructing corresponding solutions to~\eqref{PDE} and using the uniqueness result of~\cite{liu_arrPDE}.   %Recall that given a process $\wN$, there is a unique corresponding $\zN$ for which $\sum_i z_i=0$, and from here there is a unique $\hN$ up to constant shifts. This process $\hN$ has a limit $\h$ which, by regularity results for the PDE~\eqref{PDE}, will have two spatial derivatives~\cite{cite}. This gives some evidence that a limit of $\wN$ exists.

The reason to study $\wN$  --- specifically, the \emph{probability distribution} of $\wN(t)$ once ``local equilibrium" has been reached ---  is that doing so will uncover very interesting new phenomena overlooked in a study of $\hN$ alone. The most striking phenomenon is the roughness of the $(\E w_i)_{i=1}^N$ profile mentioned in the introduction. Note that this roughness in the microscopic system is \emph{not} related to any potential singularity formation in solutions to the PDE~\eqref{PDE-w}, since we take initial data to ensure the PDE has a strong analytic solution.
%These new phenomena, such as the rough variation of $\E w_i$ noted in the introduction, will give rise to a new convergence mechanism. When $\wN$ does correspond to the curvature of a height process $\hN$, this convergence mechanism  leaves its fingerprint on the PDE governing the hydrodynamic limit $h$ of $\hN$. 

%. We argue in Section~\ref{subsec:arr-discush} that in fact, the local equilibrium state of $\wN$ l

\section{Three Key Limits within Hydrodynamic Convergence}\label{sec:reduc}%read top of page 46 in Claude+Claudio carefully about using an indicator rather than a continuous function.} 
In this section, we introduce the three key limits we focus on in this paper, which expose important properties of the process. The first two limits express that $\wN$ converges to a macroscopic $w$. The third limit helps determine the PDE governing $\w$. To motivate this third limit, we will sketch the PDE derivation assuming the first two limits hold. It is important to emphasize that this PDE derivation is informal, and primarily serves as inspiration to study the third limit in addition to the first two. 

Before proceeding, we introduce some notation. For $\phi\in L^\infty(\unit)$ and $w_N(t, dx)$ given by the random measure intepretation of $\wN(t)$ (recall~\eqref{empirical}), we define
\beq\label{conv}
(\phi\ast w_N(t))(x) = \int_\unit \phi(x-y)w_N(t,dy).
\eeq Also, for a vector $\mbf v=(v_1,\dots, v_N)$ and a function $f:\R\to\R$, define
\beqsn
\ol{\bf v}_{\idxsetx}&=\frac{1}{2N\epsilon}\sum_{\argidxsetx i} v_i, \\
\bar {f}({\bf v}_{\idxsetx}) &= \frac{1}{2N\epsilon}\sum_{\argidxsetx i}f(v_i).\eeqsn

Now, instead of the usual convergence~\eqref{hydro-lim} given in the hydrodynamic limit definition, we will consider a slightly more physically intuitive convergence. Namely, we will study whether the following holds:
\beq\label{convergence}\limNeps\E\bigg|\frac{1}{2N\epsilon}\sum_{i\in\idxsetx} w_i(t) - \w(t,x)\bigg|^2=0,\quad\forall t>0,x\in\unit.\eeq 
~\eqref{convergence} is somewhat stronger than~\eqref{hydro-lim}, but not significantly. Suppose~\eqref{hydro-lim} holds, and consider taking $\phi$ to be $\phi(u)=\hat\phi_\epsilon(x-u)$, where $\hat\phi_\epsilon$ is an even bump function centered at $0$ with support in $(-\epsilon,\epsilon)$. By~\eqref{hydro-lim}, we have that $( \wN(t)\ast\hat\phi_\epsilon)(x)$ converges as $N\to\infty$ to $(\w(t)\ast\hat\phi_\epsilon)(x)$ in probability for each $x\in\unit$. If $\w$ is continuous in $x$, then it is not hard to see that $( \wN(t)\ast\hat\phi_\epsilon)(x)$ converges as $N\to\infty$ and then $\epsilon\to0$ to $\w(t,x)$ (this means the probability that the two differ by more than $\delta$ goes to zero in the double limit). In~\eqref{convergence}, we ask for a slightly stronger convergence than this, replacing the smooth function $\hat\phi_\epsilon(u)$ by the indicator $\phi_\epsilon(u)=\frac{1}{2\epsilon}\mathbbm{1}_{(-\epsilon,\epsilon)}(u)$ and convergence in probability by convergence in $L^2$. 

%We also write $c_{N}\stackrel{N}{\approx}d_{N}$ to mean that $\lim_{N\to\infty}|c_{N}-d_{N}|=0$, and $c_{N,\epsilon}\stackrel{N,\epsilon}{\approx}d_{N,\epsilon}$ to mean that $\limNeps|c_{N,\epsilon}-d_{N,\epsilon}|=0$. 
The advantage of studying the convergence~\eqref{convergence} is two-fold. First, it allows us to focus on properties of the process in local, \emph{mesoscopic} regions of the lattice. ``Mesoscopic" regions are intervals $(x-\epsilon,x+\epsilon)$ which have macroscopic length $2\epsilon\ll1$, but which, for any fixed $\epsilon$, contain a growing number of lattice sites $2N\epsilon$, $N\to\infty$. Second, convergence in $L^2$ can be conveniently separated into convergence of expectations and vanishing variance. Namely, the convergence~\eqref{convergence} is equivalent to 
\begin{subequations}\label{one}
\begin{align}
&\limNeps\mathrm{Var}\l(\ol{\bf w}_{\idxsetx}(t) \r)= 0,\label{one-Var}\\
&\limNeps\E\ol{\bf w}_{\idxsetx}(t) = \w(t,x).\label{one-Exp}
\end{align}
\end{subequations}
These are our first two limits of interest. The equation ~\eqref{one-Var} is satisfied if e.g. the variances $\Var(w_i$) remain bounded while the correlations $\Corr( w_i,  w_j)$ decrease with $N$, so that by averaging increasingly many $ w_i$ ($2N\epsilon$ of them, with $N\to\infty$), the variance of the average goes to zero. The equation~\eqref{one-Exp} expresses that the mesoscopic spatial average of expectations $\E[ w_i]$ converges to the limiting profile $\w(t,x)$. 

Now, assume there exists a limit $\w(t,x)$ such that~\eqref{one-Exp} holds. We will show that~\eqref{one-Exp}, combined with one more limit, is ``nearly" sufficient to deduce that $\w$ is a weak solution to~\eqref{PDE-w}. We define ``weak solution" to mean that for all smooth compactly supported functions $\psi$ and for all $t>0$ we have
\beq\label{weak}\int_\unit\psi(x)[\w(t,x)dx -\w_0(x)]dx = \int_0^t\int_\unit\psi^{(4)}(x)\hat r(\w(s,x))dsdx.\eeq The term ``nearly" is a catch-all for the non-rigorous statements made below (again, the following derivation is only for the purpose of motivating the third limit). Using~\eqref{one-Exp} and the fact that $\ol{\bf w}_{\idxsetx}(t)=(\phi_\epsilon\ast w_N(t))(x)$, we have
\beqs\int_\unit\psi(x)\w(t,x)dx &= \limNeps \int_\unit\psi(x)\E\l[(\phi_\epsilon\ast w_N(t))(x)\r]dx \\&= \limNeps \int_\unit(\psi\ast\phi_\epsilon)(x)\E[ w_N(t, dx)],\eeqs so that
\beqs\label{intermediate}
\int_\unit\psi(x)[\w(t,x)dx &-\w_0(x)]dx \\&= \limNeps \int_\unit(\psi\ast\phi_\epsilon)(x)\E[ w_N(t, dx)- w_N(0,dx)]\\
&= \frac1N\sum_{i=1}^N(\psi\ast\phi_\epsilon)(i/N)\E[ w_i(t)- w_i(0)].\eeqs
Now, by definition of the generator $\LN$ and the fact that $\tilde w_i = \pi_i(\wNtild)$, we have
\beqs
\E[ w_i(t)-\E  w_i(0)] = \E\l[\tilde w_i(N^4t) - \tilde w_i(0)\r] 
%&=\E\l[N^4\int_0^t(\LN \pi_i)(\wN(N^4s))ds + M(N^4t)\r] \\
= \int_0^tN^4\E[(\LN \pi_i)(\wN(s))]ds.\eeqs 
We replaced $\wNtild(N^4s)$ with $\wN(s)$ in the last integral. Recall from~\eqref{L-pi-w} that $\LN\pi_i$ is a fourth order finite difference of the rates. Therefore, we can write~\eqref{intermediate} as
\beqs\label{pre-PDE}
\int_\unit\psi(x)[\w(t,&x)dx -\w_0(x)]dx \\= \lim_{\epsilon\to0}&\lim_{N\to\infty} \frac1N\sum_{i=1}^N(\psi\ast\phi_\epsilon)(i/N)\times\\&\int_0^tN^4\E\l[(r_{i-2}-4r_{i-1}+6r_i -4r_{i+1}+r_{i+2})( \wN(s))\r]ds\\
= \lim_{\epsilon\to0}&\lim_{N\to\infty}\int_0^t\frac1N\sum_{i=1}^N(\psi^{(4)}\ast\phi_\epsilon)(i/N)\E\l[r_i(\wN(s))\r]ds\\
=\int_0^t&\int_\unit\psi^{(4)}(x)\limNeps \E\l[\bar r({\bf w}_{\idxsetx}(s))\r] dsdx
\eeqs
%\beqs\label{pre-PDE}
%\int_\unit\psi(x)&[\w(t,x)dx -\w_0(x)]dx \\= &\limNeps N^4\times\\&\int_\unit(\psi\ast\phi_\epsilon)(x)\int_0^t\E\l[(r_{i-2}-4r_{i-1}+6r_i -4r_{i+1}+r_{i+2})( \wN(s))\r]dsdx\\
%%= &\limNeps \int_\unit\int_0^t(\psi\ast\phi_\epsilon)^{(4)}(x)\E[r( w_i(s))]dsdx\\
%=&\int_\unit\int_0^t\psi^{(4)}(x)\limNeps \l[\frac{1}{2N\epsilon}\sum_{\argidxsetx i}\E r( w_i(s))\r]dsdx
%\eeqs
To get the last line, define $\eta_s=\sum_{i=1}^N\E[r_i(\wN(s))]\delta(x-i/N)$, and note that the integral in the second to last line can be written as $\int_0^t\int_0^1 (\psi^{(4)}\ast\phi_\epsilon)(x)\eta_s(dx)ds$. Now, move $\phi_\epsilon$ onto $\eta_s$ and note that $(\phi_\epsilon\ast\eta_s)(x) = \E\l[\bar r({\bf w}_{\idxsetx}(s))\r].$ 

Comparing~\eqref{pre-PDE} with the weak formulation~\eqref{weak} of the PDE, we see that it remains to show that
\beq\label{two}\limNeps \l|\E\bar r({\bf w}_{\idxsetx}(s))- \hat r\l(\w(s,x)\r)\r|=0\eeq for all $s,x$. This limit should hold uniformly in $s$ and $x$, but we are primarily interested in local properties of $\wN$, so we will not study whether uniform convergence over time and space holds. The limit~\eqref{two} is the key reason a PDE emerges. It allows us to close the macroscopic equation by replacing a mesoscopic average of nonlinear observables of the microscopic $ w_i$ with a function of the limiting value $\w$ in that mesoscopic neighborhood. The fact that the PDE derivation reduces to~\eqref{two} crucially depended on $\E{\bf w}_{\idxsetx}(s)$ converging to $\w(t,x)$. As such, it is more appropriate to rewrite~\eqref{two} as follows, which is equivalent to~\eqref{two} thanks to the continuity of $\hat r(w)=e^{-2Kw}$:
\beq\label{two-true}\limNeps \bigg|\E\bar r({\bf w}_{\idxsetx}(s))- \hat r\l(\E{\bf w}_{\idxsetx}(s)\r)\bigg|=0\eeq This is the third limit of interest. 
\begin{remark} A feature of the $\wN$ dynamics which vastly simplifies the above arguments is the fact that the generator applied to $\pi_i$ involves four finite differences, which can all be moved onto the smooth test function to cancel the $N^4$ factor coming from the time scaling. In this respect, the $\wN$ process is similar to IPS satisfying the so-called gradient condition. This term is primarily used in the context of IPS leading to second order diffusions. However, the simplifying feature is the same as in our case: the generator applied to $\pi_i$ has exactly $\alpha$ finite differences, where $N^\alpha$ is the characteristic time scale. See~\cite{varadhan1996nongradient} for a review of this concept, and e.g.~\cite{wick1989hydrodynamic, kipnis1994hydrodynamical, varadhanII} for examples of nongradient convergence proofs.  \end{remark}

The limit~\eqref{two-true} is similar to the Replacement Lemma (RL), a key intermediate result in many rigorous hydrodynamic limit proofs. See for example Theorem 3.1 in~\cite{hardcore}, Lemma 1.10 in Chapter 5 of~\cite{kipnisbook}, and Lemma 3.5 and preceding arguments in Part II of~\cite{spohnbook}. The RL differs somewhat from~\eqref{two-true} in several respects. First, the RL is a statement about convergence in $L^1$ rather than convergence in expectation. The second more significant difference is that in the RL, the measure with respect to which $L^1$ convergence is shown is a spatially and temporally, \emph{globally averaged} measure. For the hydrodynamic limit proof it is sufficient to study this globally averaged measure, which has some very convenient properties.

However, one can obtain much more insight about the closure of the macroscopic equation by studying local properties of a process $\vN$; specifically, the marginal distributions $\Law(\{ v_{i}(s)\}_{\argidxsetx i}).$ In the next section, we will review what is known about the structure of these distributions for standard IPS. The structure will shed light on the three limits~\eqref{one-Exp},~\eqref{one-Var}, and~\eqref{two}, which we will now rewrite slightly for a general process $ \vN$. Both here and later on in the text, dependence on $t$ is implied whenever it is not explicitly indicated.
%\begin{subequations}
\begin{align*}
\label{V}&\lim_{\epsilon\to0}\lim_{N\to\infty}\mathrm{Var}\l(\ol{\bf v}_{\idxsetx }\r)= 0\quad\forall x\in\unit, t>0\tag{V}\\ \\
\label{E}&\text{There exists a continuous $v:\unit\to\R$ such that }\tag{E}\\
&\lim_{\epsilon\to0}\lim_{N\to\infty}\E\ol{\bf v}_{\idxsetx }= \v(t,x), \quad \forall x\in\unit, t>0\\ \\
&\label{Ef}\text{For all ``suitable" }f,\text{ there exists }\hat f\text{ such that  }\tag{Ef}\\ &\E\bar {f}({\bf v}_{\idxsetx })\stackrel{N,\epsilon}{\approx}\hat f\l(\E\ol{\bf v}_{\idxsetx }\r), \quad \forall x\in\unit, t>0.
\end{align*}
%\end{subequations} 
~\eqref{Ef} should be understood to mean that the difference between the two sides of the approximate equality goes to zero in the $N\to\infty$, $\epsilon\to0$ double limit. (We find the approximate equality notation simpler to interpret; this is purely an aesthetic decision). The word ``suitable" is a stand-in for a certain class of functions to be identified later. For this class of functions $f$, the corresponding $\hat f$ will be guaranteed to be continuous, so that given~\eqref{E}, the limit~\eqref{Ef} is equivalent to the limit $\E\bar {f}({\bf v}_{\idxsetx })\to\hat f(\v(t,x))$. 
%\exists R\text{ s.t.  }\max_{\substack{i,j\in NB_\epsilon(x)\\|i-j|>R}}\Corr( v_i(t),  v_j(t))&\stackrel{N,\epsilon}{\to}0, \quad\max_{i\in NB_\epsilon(x)}\Var( v_i(t))

\section{Local Equilibrium}\label{sec:smooth-LE} This section will set the stage for our presentation of the fascinating and anomalous properties of $\wN$'s local equilibrium, by drawing a comparison to the LE state of more standard processes $\vN(t)\in\Z^N$ for which hydrodynamic limits have been shown to exist.  We will review what is known in the literature about these processes' LE states, and supplement the review with a few illuminating numerical simulations. The numerical simulations give stronger results than does the existing theory, showing that in addition to~\eqref{V},  stronger versions of~\eqref{E} and~\eqref{Ef} are satisfied. But first, what \emph{is} an LE state? 
\subsection{Informal Definition and Prototypical Form of LE State}\label{subsec:LE-def} 
\begin{definition}[LE State, Informal]\label{def:LE}A \emph{\textbf{local equilibrium state}} is a sequence of random vectors $\vN$ (equivalently, measures $\muN=\Law(\vN)$) on $\Z^N$, $N=1,2,\dots$ with the following property: there is a continuous function $v:\unit\to\R$, such that for each $x$ the marginal distributions $\Law(\{v_i\}_{i\in\idxsetx})$ are asymptotically ``fully determined" (through some parameterization) by $v(x)$. %Informally, an LE state is a probability distribution $\muN_{v(\cdot)}$ on $\Z^N$ with the following special property: if $(v_1, \dots, v_N)\sim\muN_{v(\cdot)}$, then in each mesoscopic region $i\in \idxsetx$, the marginal distribution $\Law(\{v_i\}_{i\in\idxsetx})$ can be ``approximately" parameterized by a single number $v(x)$, which varies continuously with $x\in[0,1]$. 
A \emph{\textbf{global equilibrium}} state is a local equilibrium state in which $v(x)\equiv v_0$ is constant, so that the entire distribution of $(v_1,\dots, v_N)$ is parameterized by the single number $v_0$. \end{definition}

To connect these notions with time, typically, interacting particle systems $\vN(t)$ assume an LE state instantaneously on the macroscopic time scale (i.e. $\Law(\vN(t))$, $N=1,2,\dots$ is an LE state for each $t>t_N$, where $t_N$ to $0$ as $N\to\infty$), and the function $v=v(t,x)$ varies smoothly with $t$, i.e. slowly on the microscopic time scale. Meanwhile, the invariant, long-time distribution of the process is given by a global equilibrium state, parameterized by a single constant $v_0$. 
%We say a process $\vN(t)$ \emph{locally equilibrates} if $\Law(\vN(t))$, $N=1,2,\dots$ is an LE state for each fixed $t>t_N$, where $t_N$ is an initial burn-in time. 
%Local equilibration is the crucial mechanism in hydrodynamic convergence. It allows us to characterize mesoscopic states with single parameters, and this enables a macroscopic dynamics to emerge out of the microscopic processes $\vN$. 

%Typically, local equilibration occurs instantaneously with respect to the macroscopic time scale. That is, if $\vN(t)=\vNtild(N^\alpha t)$ is the time-rescaled process, then we expect the burn-in time $t_N$ to go to zero as $N\to\infty$. On the other hand, equilibration is not necessarily instantaneous on the microscopic scale.

As we will see, equilibration of $\vN(t)$ to either a ``rough" or ``smooth" LE state (defined below), is the key reason we expect the function $\hat f$ of~\eqref{Ef} to exist. And as the previous section shows,~\eqref{Ef} (together with~\eqref{E}) is a crucial ingredient in the emergence of a PDE governing the macroscopic dynamics.

\subsection{Smooth Local Equilibrium}\label{smooth-LE-def} The prototypical LE state takes the form
\beq\label{local-eq}\muN=\muN_{v(\cdot)} = \bigotimes_{i=1}^N\mu\l[v\l(\frac iN\r)\r],\eeq where $\otimes$ denotes the measure product, and $\mu[\cdot]$ is a family of mean-parameterized measures on $\Z$. By mean-parameterized, we mean that $v=m_1(\mu[v])$. To see that this is an LE state, note that if $(v_1,\dots, v_N)\sim\muN_{v(\cdot)}$, then the $v_i$, $i\in\idxsetx$ are independent and approximately identically distributed, with $\Law(v_i)\approx \mu[v(x)]$ provided $N\gg1$, $\epsilon\ll1$. Thus, the parameter $v(x)$, via $\mu[v(x)]$, fully determines the joint distribution of $v_i$, $i\in\idxsetx$, as in our definition. %In particular, this structure explains why we expect~\eqref{Ef} to hold. 

Suppose $\vN\sim\muN_{v(\cdot)}$, so in particular $\Law(v_i)= \mu[v(i/N)]$. Since $\mu$ is mean-parameterized, we have
\beqs\label{smooth-fundamental}\E v_i &= v(i/N),\\
\E[f(v_i)] &=  \mu[\E v_i](f) = \hat f(\E v_i),\quad\text{where}\quad\hat f(v) := \mu[v](f).\eeqs Equation~\eqref{smooth-fundamental} expresses the powerful yet simple idea that for mean-parameterized families, the expectation of any observable $f$ is a fixed function $\hat f$ of the mean.  The above calculations motivate us to define smooth and rough LE states as follows:
\begin{definition}[Smooth, Rough Local Equilibrium]
We say a process $ \vN(t)$ has a \emph{\textbf{smooth}} LE state if for each $t>0$,~\eqref{V} holds, as well as 
\begin{align*}
\label{Ep}&\text{There exists a continuous $v:\unit\to\R$ such that}\tag{E$\,'$}\\
&\lim_{N\to\infty}\E[ v_{Nx+k}]= \v(t,x),\quad\forall \,x\in\unit,\, \forall \,k=0,1,2,\dots\text{fixed}\\ 
\label{Efp}&\text{For all ``suitable" }f,\text{ there exists }\hat f\text{ such that  }\tag{Ef$\,'$}\\ &\E f( v_{Nx+k})\stackrel{N}{\approx}\hat f\l(\E v_{Nx+k}\r), \quad \forall \,x\in\unit,\,\forall\,k=0,1,2,\dots\text{ fixed.}
\end{align*}
We say $ \vN$ has a \emph{\textbf{rough}} LE state if~\eqref{V},~\eqref{E}, and~\eqref{Ef} are satisfied, but \eqref{Ep} and \eqref{Efp} are violated. 
\end{definition}
Like~\eqref{Ef}, we interpret \eqref{Efp} to mean the difference between the left- and righthand sides goes to zero with $N$. Note that \eqref{Ep} and \eqref{Efp} are pointwise versions of~\eqref{E} and~\eqref{Ef}, respectively. To describe one of the essential differences between rough and smooth LE states, we introduce the following terminology.
\begin{definition}\label{smth-rgh}
Let ${\bf v}_N = (v_1,\dots, v_N)$, $N=1,2,\dots$ be a sequence of vectors. We say ${\bf v}_N$ is \emph{\textbf{smoothly}} varying in a neighborhood of $x$ if for any finite $R\geq 0$ we have
$$\lim_{N\to\infty}\max_{|i-Nx|\leq R}|v_{i+1}-v_i| = 0.$$ Otherwise, ${\bf v}_N $ is \emph{\textbf{roughly}} varying. 
\end{definition}
It is straightforward to see that whenever \eqref{Ep} holds, the expectations $\E[ \vN(t)]$ are smoothly varying near every $x$. On the other hand,~\eqref{E} does not guarantee this. As we will see in the next section, $\E[ \wN(t)]$ (the expectation of the crystal curvature process in the rough scaling regime) is roughly varying in a very interesting way. Nevertheless, for rough LE states $\vN$,~\eqref{E} is still satisfied for a continuous function $v(t,x)$. Thus, the roughness of $\E[v_i]$ must smooth out upon mesoscopic averaging.

Our prototypical example of a smooth LE state is the measure~\eqref{local-eq}. For such a measure, let us specify the ``suitable" class of functions $f$. First, assume the family $\mu[\cdot]$ is continuous and majorized by some pmf $p$:
\begin{definition}\label{family}
We say the family $\{\mu[v]\mid v\in\R\}$ is \emph{\textbf{continuous}} if $\lambda\mapsto\mu[\lambda](n)$ is continuous for each $n\in\Z$. We say the pmf $p$ \emph{\textbf{majorizes}} the family if $p$ has a finite first moment and
\beq\label{majorize} \forall M>0\quad\exists C(M)>0\text{ s.t. }\sup_{|v|\leq M}\mu[v](n)\leq C(M)p(n),\quad\forall n\in\Z.\eeq
\end{definition}
For a continuous measure family majorized by $p$, the suitable class of functions $f$ are the $f\in L^1(p)$. Indeed, if  $f\in L^1(p)$ then the expectations $\hat f(v):=\mu[v](f)$ are finite near any fixed $v_0$. Moreover, by the Lebesgue Dominated Convergence Theorem, $\hat f$ is continuous at any fixed $v_0$. 
 
Of course, it is unrealistic to expect $\Law(\vN(t))=\muN_{\v(t,\cdot)}$ exactly in hydrodynamic convergence arguments. But if $\Law(\vN)$ is sufficiently close in some sense to $\muN_{\v(t,\cdot)}$ as $N\to\infty$, then we might still expect \eqref{Ep} and \eqref{Efp} to hold. Spohn postulates this in Conjecture 3.2 of Part II of~\cite{spohnbook}. However, rigorous results on the proximity of $\Law(\vN(t))$ to the local equilibrium state $\muN_{\v(t,\cdot)}$ of~\eqref{local-eq} are insufficient to show that \eqref{Ep} and \eqref{Efp} hold. We review what is known theoretically about a few standard processes and show numerically that \eqref{Ep} and \eqref{Efp} are in fact satisfied.
\begin{remark}
Note that~\eqref{Ef} and \eqref{Efp} only consider single site observables $f$, i.e. $f:\R\to\R$. We expect analogous statements can be formulated for general cylinder functions $f:\R^{N}\to\R$. For example, for the general statement of \eqref{Efp}, see the aforementioned Conjecture 3.2 in~\cite{spohnbook}. We choose to focus on single site observables for simplicity and because the key observable of interest for the $\wN$ process is of this form, namely, $f(\wN)=r_i(\wN)=r(w_i)$. 
\end{remark}

\subsection{Example of Smooth LE State: Zero Range Process} Let $S=\mathbb N\cup \{0\}$, and $\vNtild(t)\in S^N$ be a jump process in which $\tilde v_i(t)$ represents the number of particles at lattice site $i$ at time $t$. The dynamics of the zero range process is determined by a pmf $p$ supported on $\{-R,\dots, R\}$ and a rate function $g:\{0,1,\dots\}\to [0,\infty)$. At each lattice site $i$, a Poisson clock with rate $g(v_i)$ determines when one of the $v_i$ particles will jump away from site $i$ (with $g(0)=0$, to preclude jumps from a lattice site containing no particles). The pmf $p$ determines the probability that the particle will jump to site $i+k$, $k=-R,\dots, R$. The overall transition rate $\vN\mapsto \vN^{i,i+k}$ is therefore $r_{i,i+k}(\vN) = g(v_i)p(k)$. The process is named ``zero range" due to the fact that the jump rate from site $j$ only depends on the number of particles at site $j$ itself. 

Kipnis and Landim prove that under certain conditions on $g$, the zero range process has a hydrodynamic limit $v(t,x)$ under diffusive time scaling, i.e. $\vN(t) = \vNtild(N^2t)$, with no amplitude scaling~\cite{kipnisbook}. Their proof uses the relative entropy method introduced in~\cite{Yau1991}, in which one obtains an estimate of the relative entropy between $\Law(\vN(t))$ and an appropriate local equilibrium state $\muN$. For the zero range process, the LE state of $\vN$ takes the prototypical form~\eqref{local-eq}. The mean-parameterized measure family $\{\mu[v]\mid v\in\R_+\}$ for this process is described in Appendix~\ref{app:ZR-LE}. Specifically,~\cite{kipnisbook} shows that \beq\label{rel-ent}H(\Law(\vN(t))\mid \muN_{v(t,\cdot)}) = o(N),\quad N\to\infty,\eeq where $v(t,x)$ is the hydrodynamic limit and $H$ is the relative entropy, 
$$H(\mu\;\big\vert\;\nu) = \sum_{\mbf v\in\mathrm{supp}(\nu)}\mu({\bf v})\log(\mu({\bf v})/\nu({\bf v})).$$ However,~\eqref{rel-ent} does not imply $H\l(\Law(v_{Nx+R})\;\big\vert\; \mu\l[v\l(x\r)\r]\r)=o(1)$ for any fixed $x$, so~\eqref{rel-ent} is insufficient to prove \eqref{Ep} and \eqref{Efp}. 
\begin{remark}
An example of a result one can show using~\eqref{rel-ent}, which is somewhat similar to~\eqref{Ef}, is
$$\lim_{\ell\to\infty}\lim_{N\to\infty}\frac1N\sum_{i=1}^N\bigg|\E\bigg[\frac{1}{2\ell}\sum_{|j-i|\leq\ell}f(v_i(t))\bigg] -\hat f(v(t,x))\bigg|=0.$$
This follows from the proof of Corollary 1.3 in Chapter 6 of~\cite{kipnisbook}.\end{remark}
Using numerical simulation, one can get much stronger results. Indeed, we show that the symmetric nearest neighbor zero range process has a smooth LE state, i.e. it satisfies~\eqref{V}, \eqref{Ep}, and \eqref{Efp}. For the rate function $g$ of the zero range process, we take $g(k) = k + k^{1/4}$. The reason for this choice of $g$ is that it satisfies the condition of linear growth, assumed in the proof of~\eqref{rel-ent} given in~\cite{kipnisbook}. Moreover, we choose $g$ to be nonlinear, in order to make the existence of $\hat g$ nontrivial (when verifying \eqref{Efp} with $f=g$). For our initial macroscopic condition we take $\v_0(x) = 5\sin(2\pi x)^2$. See Appendix~\ref{app:KMC} for the details of the numerical simulations. The left panel of Figure~\ref{fig:Eprime} depicts the $i/N\mapsto \E[ v_i(t)]$ profile, and the right panel depicts $\max_i\Var(\ol{\bf v}_{i\pm N\epsilon})$, for $N=500,1000,2000$.
\begin{figure}
\centering
\vspace{-0.6cm}
\includegraphics[width=0.7\textwidth]{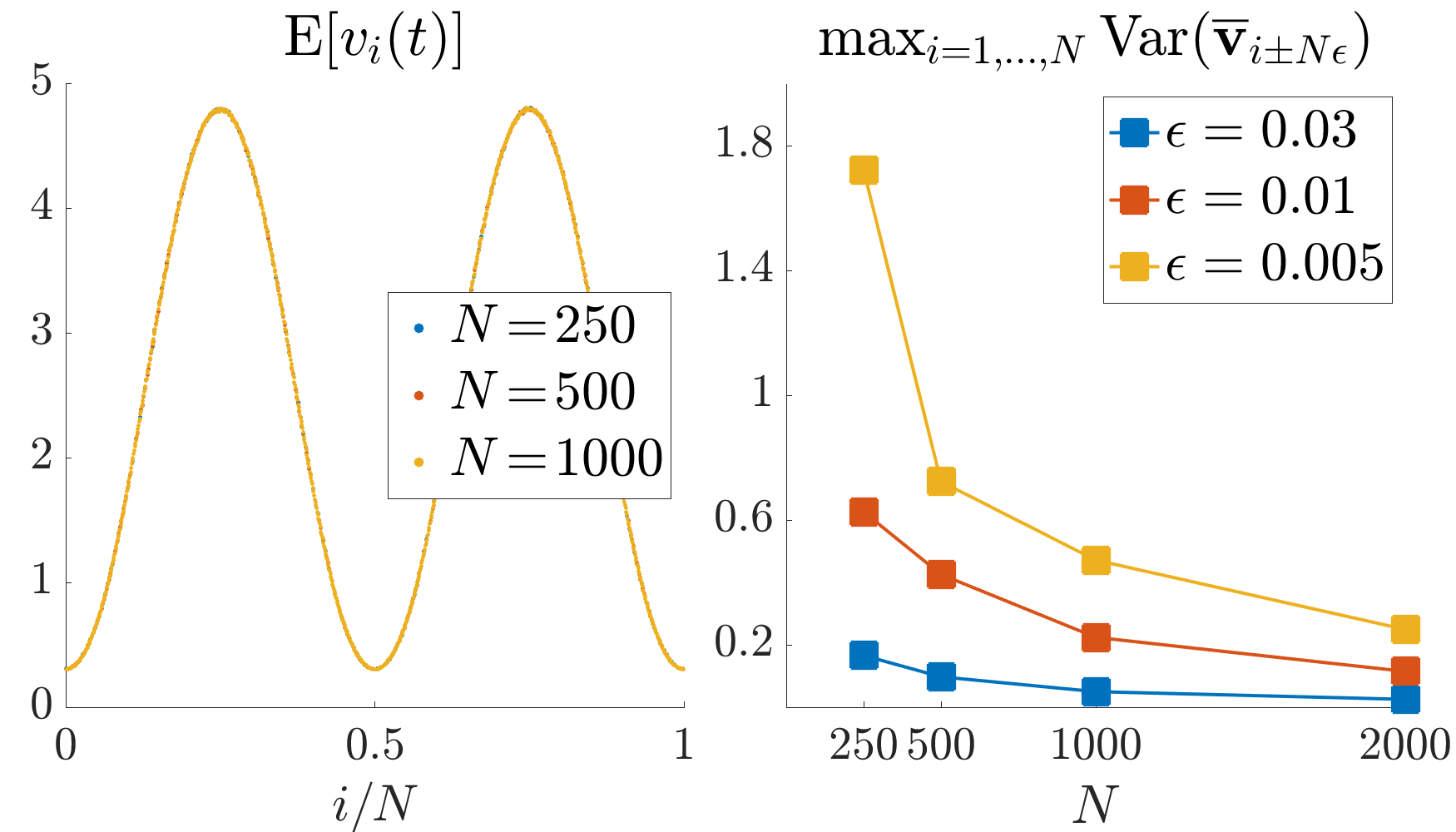}
\caption{Verification of \eqref{Ep} and~\eqref{V}. For each $N$, the expectations $\E[ v_i(t)]$ and variances $\Var(\ol{\bf v}_{i\pm N\epsilon})$ are estimated from an average of $2\times 10^5$ samples.} 
\label{fig:Eprime}
\end{figure} 
It is clear from the left panel that there is a smooth function $v(t,x)$ such that $\E[ v_{Nx+R}(t)]$ converges to $v(t,x)$ as $N\to\infty$ for any fixed $R$. This confirms \eqref{Ep}. The right panel also clearly confirms~\eqref{V}. Next, we verify \eqref{Efp} for two functions $f$: $f(v)=g(v)-v = v^{1/4}$, where $g$ is the rate function, and $f(v)=ve^{-v}$. To check that this limit holds, we plot pairs $(\E  v_i, \E f( v_i))$ to see if they line up on the curve $(v,\hat f(v))$ as $N$ increases. This is shown in Figure~\ref{fig:Efprime}. The function $\hat f$ is computed explicitly using our explicit knowledge of $\mu[\cdot]$. See Appendix~\ref{app:ZR-LE} for the details. 
\begin{figure}
\centering
\vspace{-0.6cm}
\includegraphics[width=0.8\textwidth]{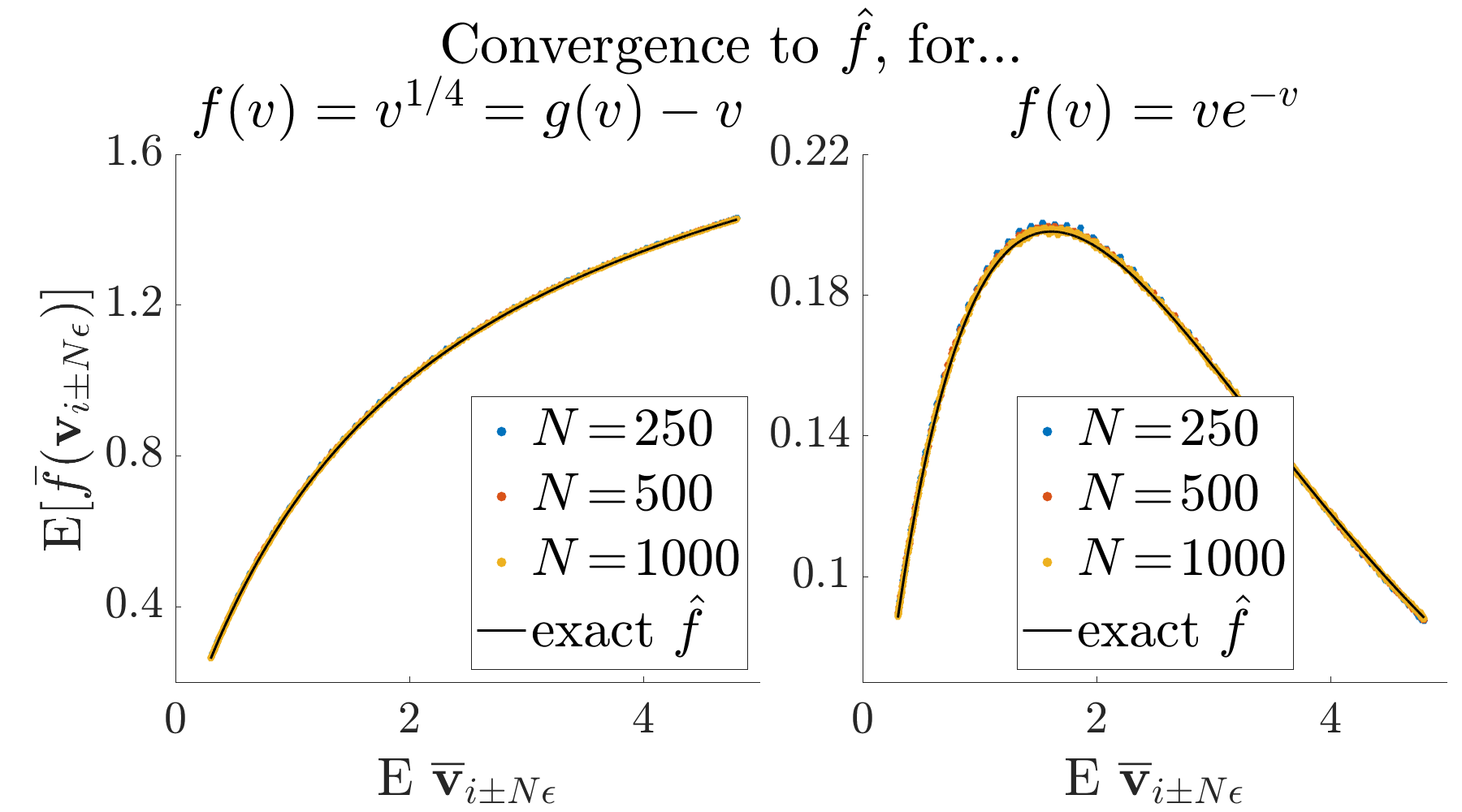}
\caption{Verification of \eqref{Efp} for two functions $f$.} 
\vspace{-0.5cm}
\label{fig:Efprime}
\end{figure} 
%Finally, Figure~\ref{fig:V-smooth} shows that $\max_i\Var( v_{i})$ remains bounded with $N$, while $\max_{i,k}\Corr( v_i,  v_{i+k})$ goes to zero with $N$. Assuming that the correlations decay with $N$ for lag $k>9$ as well (physically, it is clear correlations decay with distance), it will follow that~\eqref{V} holds as well. 
%\begin{figure}
%\centering
%\includegraphics[width=\textwidth]{}
%\caption{Verification of~\eqref{V}.} 
%\label{fig:V-smooth}
%\end{figure} 

\subsection{Hard Core Exclusion; Crystal Slope Process}
We confirm the distinguishing feature of smooth LEs, that $\E[v_i(t)]$ varies smoothly across lattice sites $i$, for two more processes. First, we simulate a hard core exclusion process satisfying the conditions of~\cite{hardcore}. Under these conditions, the authors prove that $\Law(\vN(t))\approx\muN_{v(t,\cdot)}$, where $\muN_{v(t,\cdot)}$ takes the form~\eqref{local-eq} and $\mu[v]=\mathrm{Bernoulli}(v)$. See the paper for the precise statement which, as for the zero range process, is insufficient to show that the limits \eqref{Ep} or \eqref{Efp} hold. The plot of $\E v_i$, which we omit for brevity, clearly shows the $\E v_i$ are smoothly varying. 

Second, we consider the crystal slope process in the smooth scaling regime, i.e. we take $\zN(t) = \zNtild(N^4t)$, where $z_i = h_{i+1} - h_{i}$ for a height process initialized with amplitude scaling $N^1$. For this process, there are currently no rigorous results on the form of $\Law(\zN)$. However, as argued in~\cite{mw-krug}, $\zN$ is expected to have a smoothly varying local Gibbs LE state (for more on the local Gibbs distribution, see Section~\ref{subsec:arr-prop}). The left and right panels of Figure~\ref{fig:ASEP-crystal} show $\E [z_i(t)]$ and $N(\E[z_i(t)]-\E[z_{i-1}(t)])$, respectively, as functions of $i/N$. We take $N=1000$ and $t=10^{-8}$. As expected, we see that $\E[z_i(t)]$ look smoothly varying, and interestingly, so do the finite differences. The oscillations in both plots are smooth, and not an artifact. They are a consequence of the function $\lambda_D$ appearing in the PDE (for a plot of this function, see Figure~\ref{fig:u-lambda}).

\begin{figure}
%\vspace{-0.2cm}
%\label{fig:w-wsq-r}
\centering
\includegraphics[width=0.42\textwidth]{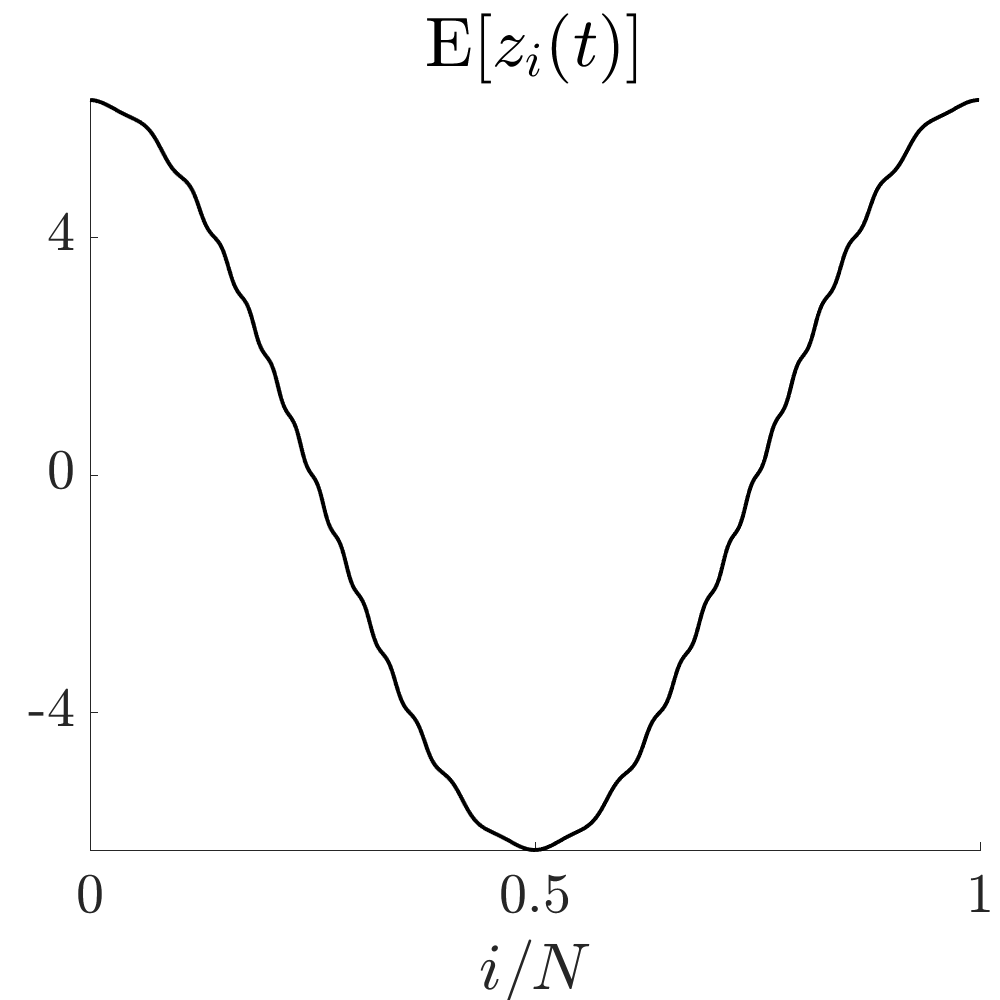}
\includegraphics[width=0.42\textwidth]{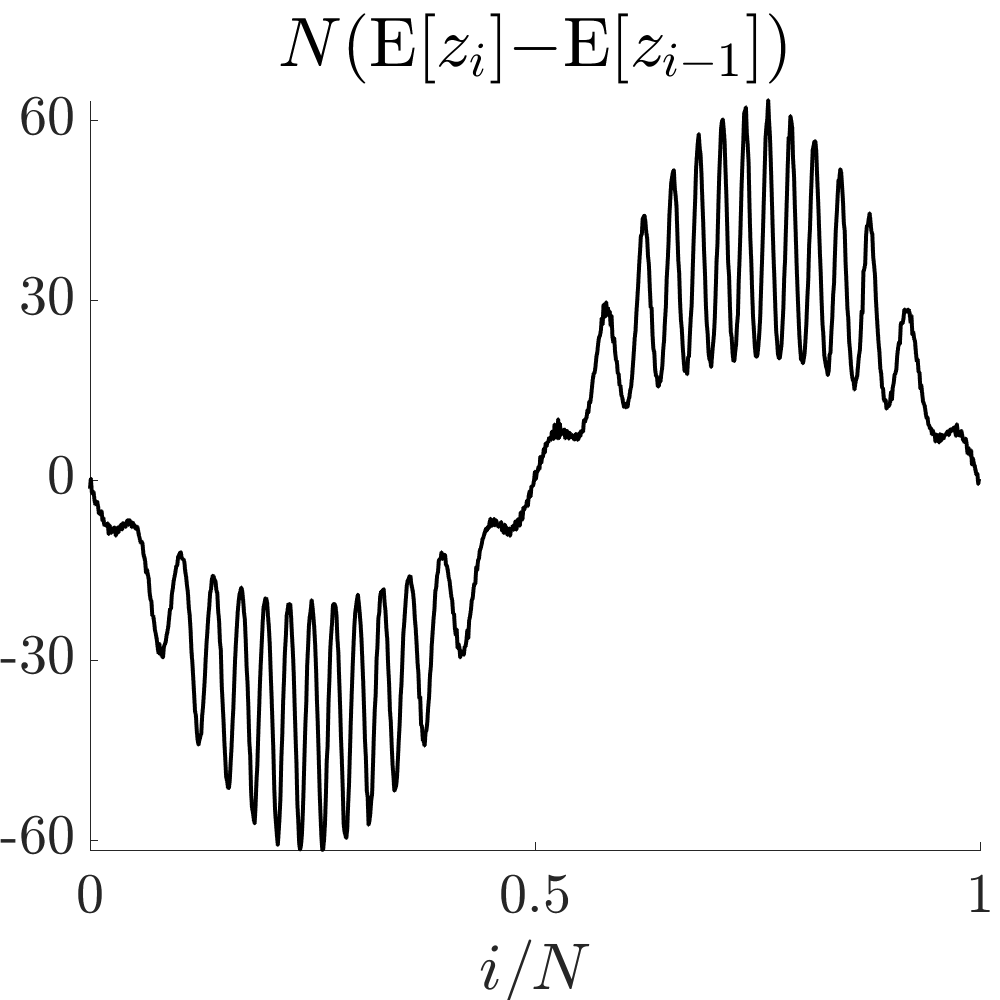}
\caption{Left: $\E[z_i(t)]$, for the time-rescaled crystal slope process in the smooth scaling regime. Right: $N(\E[z_i(t)]-\E[z_{i-1}(t)])$. Both seem to be smoothly varying. The oscillations are not an artifact; they are a result of the function $\lambda_D$ appearing in the PDE. $N=1000$ and $t=10^{-8}$. }
\label{fig:ASEP-crystal}
\end{figure}

\section{Rough Local Equilibrium: A Numerical Study}\label{sec:rough-LE} We now return to the crystal process $ \wN$. Our numerical study of the distribution of $ \wN(t)$, starting in Section~\ref{subsec:not-smooth}, will show that it is nothing like the prototypical LE state~\eqref{local-eq}. However, we show in Section~\ref{subsec:rough-LE} that it is nevertheless a \emph{rough} LE, satisfying~\eqref{E},~\eqref{Ef}, and~\eqref{V}. Section~\ref{subsec:qual} concludes with some qualitative observations. But first let us mention a few numerical considerations. 
\subsection{Numerical Considerations} Let $\E^n[f( \wN(t))]$ denote the average of \\$f(\wN^{(k)}(t))$, $k=1,\dots,n$, computed from $n$ independent runs of the process. We will also consider time averages
\beqs\label{time-av}\E_\Delta[f( \wN(t))]:=\frac1\Delta\int_{I_t}\E[f( \wN(s))]ds,\eeqs where $I_t$ is any length $\Delta$ interval containing $t$. Let $\E_\Delta^n[f(\wN(t))]$ denote sample average estimates of $\E_\Delta[f( \wN(t))]$. For details on how we compute these quantities, see Appendix~\ref{app:KMC}.

%The main idea of our numerical approach is straightforward: we fix an initial profile $\w_0(x)$ and a sequence of distributions $\muN_0$, $N=1,2,\dots$ associated to $\w_0$ as in Definition~\ref{def:init}. For a given $N$, we draw an initial $\wN(0)$ profile from $\muN_0$ and use Kinetic Monte Carlo (KMC) to evolve forward the microscopic process $\wNtild$ until a time $N^4t$. To estimate expectations $\E[f( \wN(t))]$, we simulate $n$ process $\wNtild^{(k)}$, $k=1,\dots, n$ starting from $n$ initial conditions drawn independently from $\muN_0$, and take
%\beq\label{usual-av}\E[f( \wN(t))]\approx \E^n[f( \wN(t))]:=\frac 1n\sum_{k=1}^n f(\wNtild^{(k)}(N^4t)).\eeq We will also look at time averages, estimated via
%\beqs\label{time-av}\E_\Delta[f( \wN(t))]&:=\frac1\Delta\int_t^{t+\Delta}\E[f( \wN(s))]ds\\
%&\approx \E_\Delta^n[f( \wN(t))]:=\frac 1n\sum_{k=1}^n \frac{1}{N^4\Delta}\int_{N^4t}^{N^4(t+\Delta)}f(\wNtild^{(k)}(s))ds.\eeqs The time integrals for each path $\wN^{(k)}$ can be computed exactly because $ \wN(t)$ is a step function in time. 

For precise confirmation of~\eqref{E},~\eqref{Ef}, and~\eqref{V}, --- which are statements about the instantaneous-time measure $\Law(\wN(t))$ --- we use the sample-only, instantaneous estimator $\E^n[f(\wN(t))]$. The only exception is in computing the spatially averaged rate expectation $\E[\bar r({\bf w}_\idxsetx)]$ needed to check~\eqref{Ef} for $f=r$. The rate observable has a very large variance (even upon spatial averaging) so we are forced to rely on the sample-and-time average~\eqref{time-av}, which has significantly lower variance. We justify our use of the time average to estimate the instantaneous-time rate expectation in Appendix~\ref{app:time-av}. We also use time averaging to compute expectations of other observables which help us understand additional properties of the LE state, which are not involved in our formal verification of~\eqref{E},~\eqref{Ef}, and~\eqref{V}. In this case, it is simply convenient to have very low variance estimators since they give us a clearer picture of some qualitative features of the LE state. The left panel of Figure~\ref{fig:gazon-time-av} shows that time averaging negligibly affects the $\E[ w_i(t)]$ profile, and for other observables the discrepancy is also negligible. We will not further comment on time averaging, but we do indicate when we have time-averaged by writing $\E^n_\Delta[f( \wN(t)]$ in the plot titles. 

We fix $K=3$ in all of the simulation results shown in this section. 

%for $t$ sufficiently separated from zero. The onset of the rough LE --- i.e. the time $t_0$ such that~\eqref{E},~\eqref{Ef}, and~\eqref{V} are satisfied for all $t>t_0$ and $x\in\unit$ --- is not instantaneous for finite $N$. 

\subsection{No Smooth LE}\label{subsec:not-smooth}
We begin by plotting $i/N\mapsto \E^n[ w_i(t)]$ to show it is roughly varying, which automatically implies \eqref{Ep} is not satisfied. Consider Figure~\ref{fig:gazon-time-av}. The black line depicts the expectations $\E^n[ w_i(0)]$ at initialization, which are chosen so that $\E[ w_i(0)] = \w_0(i/N)$ exactly. We take $N=6000$. Remarkably, in a time interval of macroscopic length $t=2\times 10^{-14}$, the process evolves from this smooth profile to the configuration shown by the blue dots, which are the points $(i/N,\E^n[ w_i(t)])$. We call these points a ``cloud" rather than a curve, since they are so roughly varying that they do not seem to pass the straight line test. Interestingly, the cloud seems to narrows to a point at integer values of the range. This will be discussed in Section~\ref{subsec:omega-tilde-w}.

Due to the chaotic nature of the cloud in space, one might be led to believe that it is similarly chaotic in time. However, an interesting feature of the $\E^n[ w_i(t)]$ process is that following a vanishingly small time $t_N$, the process evolves slowly. The red points in Figure~\ref{fig:gazon-time-av} depict the values of the process averaged over the macroscopic time interval $[1\times 10^{-14}, 2\times 10^{-14}]$. We see that these red points, $(i/N, \E^n_\Delta[ w_i(t)])$ are also roughly varying, and close to $\E^n[ w_i(t)]$. The fact that the chaotic cloud does not smooth out upon time averaging is a clear sign that it evolves slowly in time. 
\begin{figure}
\centering
\includegraphics[width=0.49\textwidth]{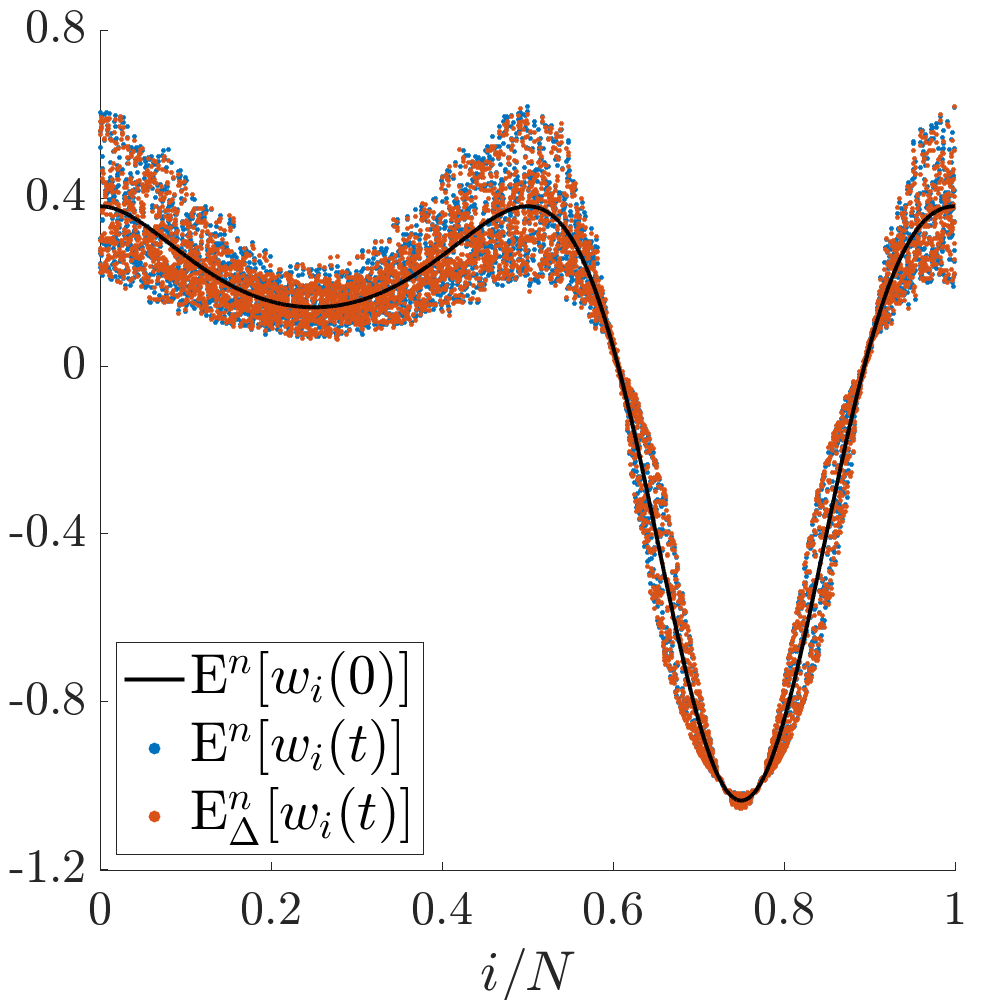}
\includegraphics[width=0.49\textwidth]{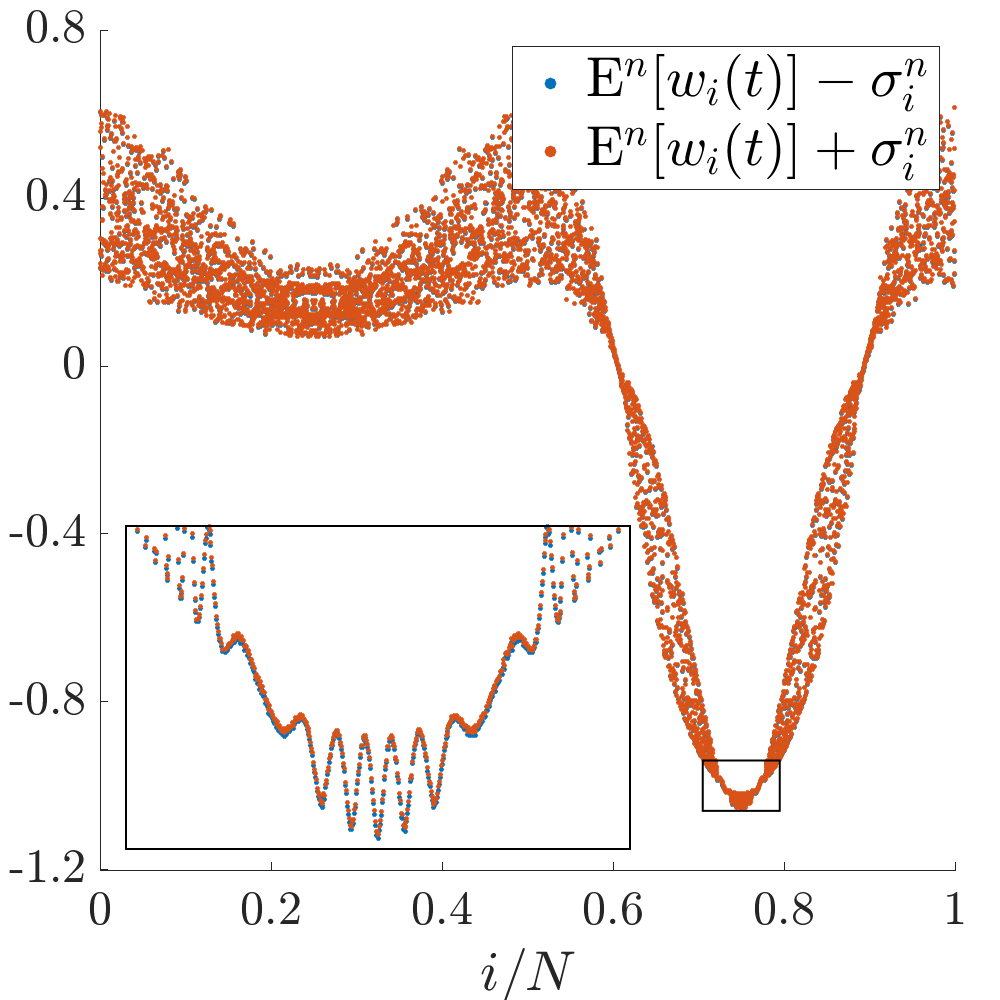}
\caption{$N=6000$. Left: the roughly varying $\E^n[ w_i(t)]$, and the time average thereof. Right: Profiles one standard deviation, denoted $\sigma^i_n$, away from the sample average $\E^n[w_i(t)]$, with $n=10^6$.} 
\label{fig:gazon-time-av}
\end{figure} 

We offer three more tests to confirm that the $i/N\mapsto \E^n[ w_i(t)]$ cloud is a real and ``permanent" phenomenon. First, we show in the right panel of Figure~\ref{fig:gazon-time-av} that the cloud is not due to randomness of the sample average approximation to $\E[ w_i(t)]$. The plot depicts a confidence interval of one standard deviation, denoted $\sigma_i^n$, around the sample mean $\E^n[w_i(t)]$, with $n=10^6$. The second and third tests are to verify that the cloud persists in time and with increasing $N$.  Figure~\ref{fig:gazon-evoln} depicts the evolution of $\E^n_\Delta[ w_i(t)]$ in time starting from two smooth initial conditions (ICs), which we call IC 1 and IC 2. They are denoted $w_0(x)=h_0''(x)$ in the plot because we evolve forward the height process to get the curvature process. Clearly, the cloud persists in time for both ICs. 
\begin{figure}
\vspace{-2cm}
\centering
\begin{subfigure}[b]{0.8\textwidth}
\includegraphics[width=\textwidth]{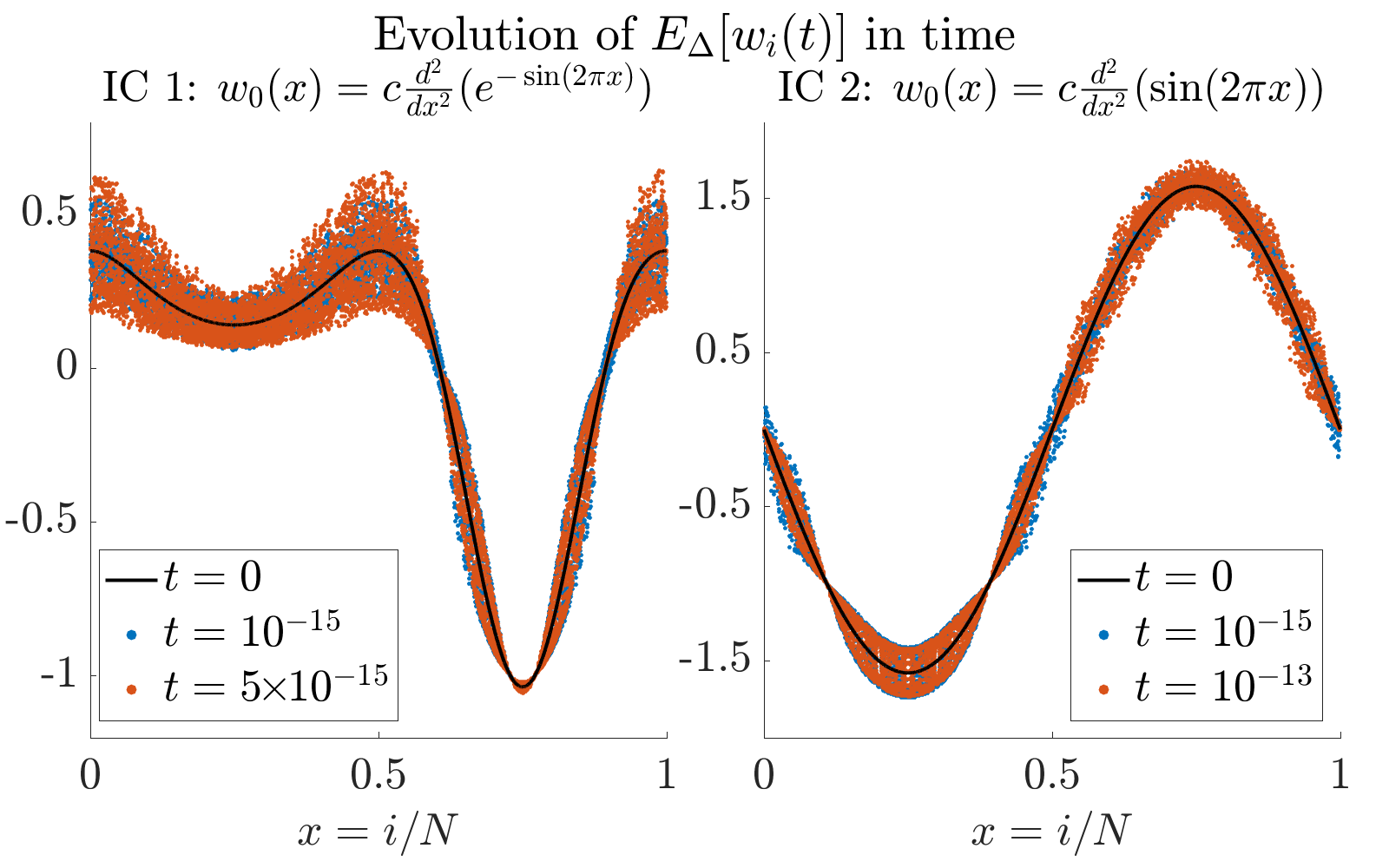}
\caption{Evolution in time of $\E^n_\Delta[ w_i(t)]$ from two initial conditions.} \label{fig:gazon-evoln}
\end{subfigure} 
\begin{subfigure}[b]{0.9\textwidth}
\centering
\includegraphics[width=\textwidth]{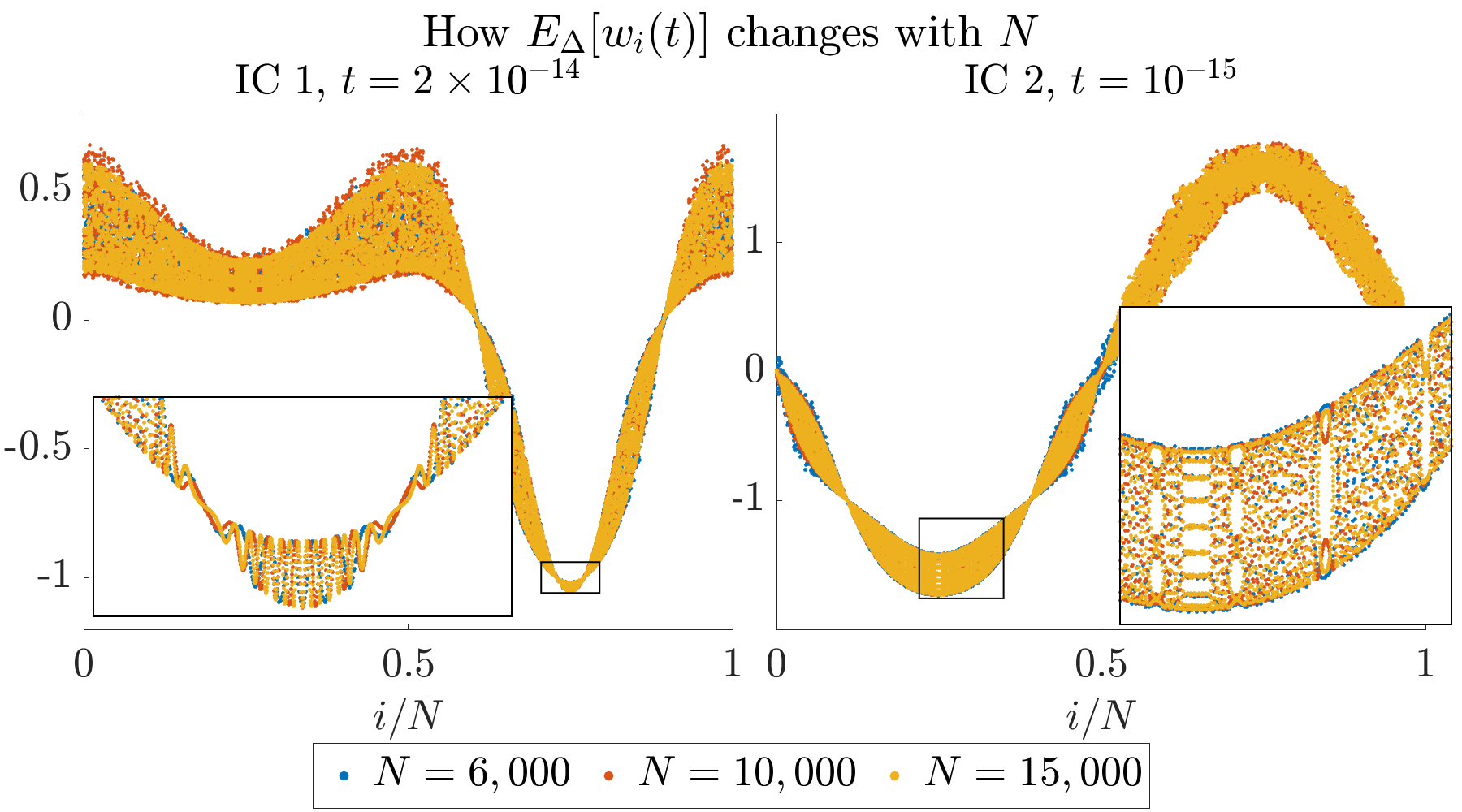}
\caption{Form of expectations $\E^n_\Delta[ w_i(t)]$ for several values of $N$. }
\label{fig:gazon-N}
\end{subfigure} 
\end{figure}
Meanwhile, Figure~\ref{fig:gazon-N} confirms that the cloud persists with increasing $N$, for both of these ICs. The figure also shows that $\E^n_\Delta[ w_i(t)]$ does not converge with $N$. This is particularly evident in the insets. 

The fact that $\E[ w_i]$ varies roughly does not rule out the possibility that the single site marginals $\mathrm{Law}( w_i(t))$ belong to a mean-parameterized measure family. However, Figure~\ref{fig:no-Efprime} shows that \eqref{Efp} is not satisfied: there is no function$\hat f$ taking $\E w_i$ to $\E f( w_i)$ in the $N\to\infty$ limit. As a result, $\mathrm{Law}( w_i(t))$ cannot belong to any mean-parameterized measure family.
\begin{figure}
\centering
\vspace{-0.4cm}
\includegraphics[width=0.7\textwidth]{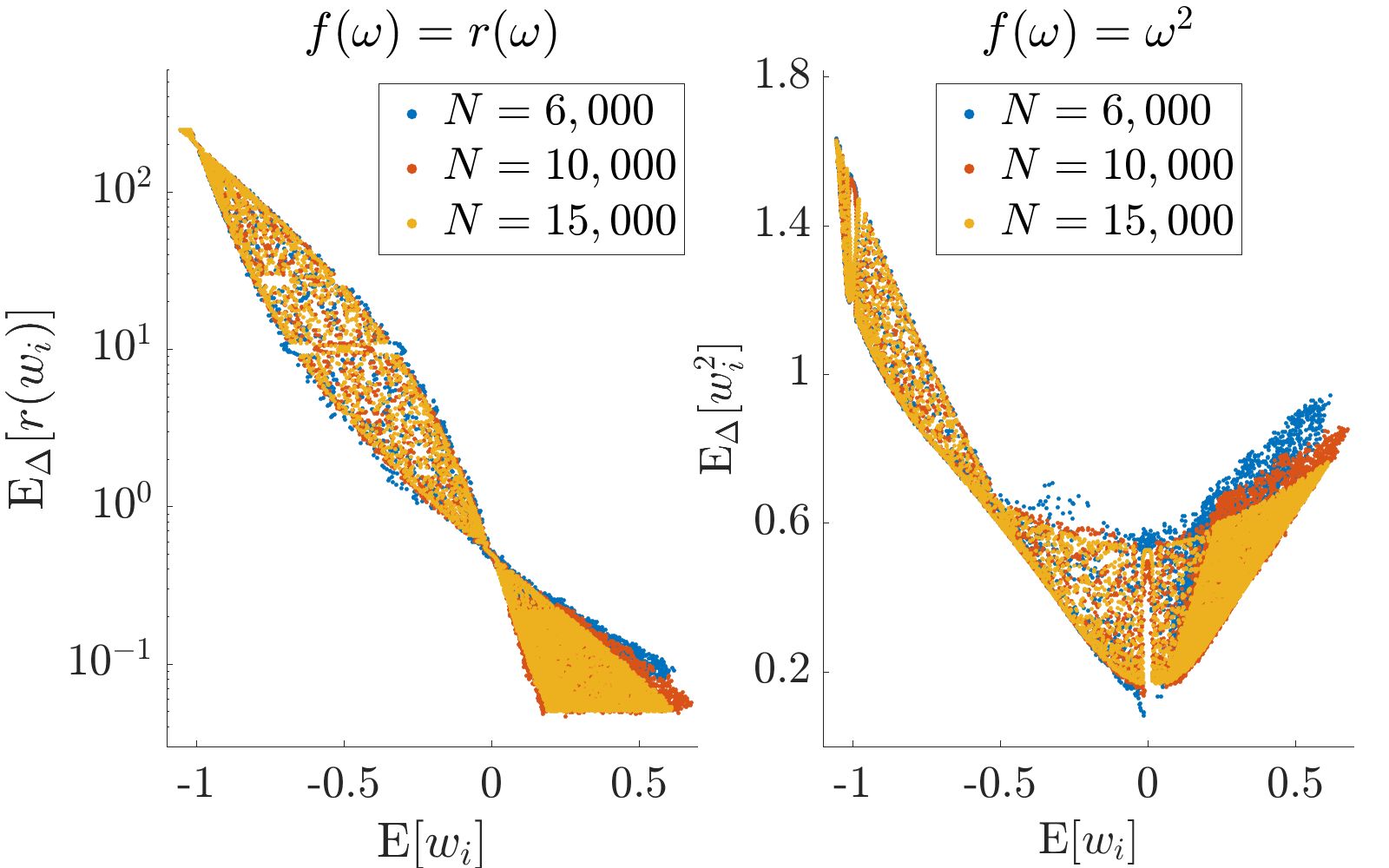}
\caption{There is no function $\hat f$ taking $\E w_i$ to $\E f( w_i)$ in the $N\to\infty$ limit. Therefore, \eqref{Efp} is not satisfied, and $\mathrm{Law}( w_i(t))$ cannot belong to any mean-parameterized measure family. Data shown from process generated by IC 1.}
\label{fig:no-Efprime}
\end{figure} 
%In this first section, we were interested in qualitative features of $\E[ w_i(t)]$. For this purpose, the time and sample average --- which yields cleaner (lower variance) estimates --- is a suitable substitute for the sample average. In our verification of~\eqref{V},~\eqref{E}, and~\eqref{Ef}, we more carefully check that our estimates of observable expectations $\E[f( \wN(t))]$ are both low bias and low variance. Averaging in time is a very effective way to obtain a lower variance estimator, but we need to check that this does not induce bias. In Appendix~\ref{}, we explain how we choose the time averaging window $\Delta$ to make the time averaged estimator of $\E[f( \wN(t))]$ both low bias and low variance, for the observables $f$ necessary to verify~\eqref{V},~\eqref{E}, and~\eqref{Ef}. In the next section, all expectations $\E[f( \wN(t))]$ at instantaneous times $t$ are estimated via averages in time over small intervals $[t-\Delta/2, t+\Delta/2]$. 

%The time average $\frac1\Delta\int_{[t,t+\Delta]}\E[ w_i(s)]$ is a slightly biased estimate for $\E[ w_i(t)]$, as seen in the $i/N<0.5$ region of the left panel of Figure~\ref{fig:gazon-time-av}. However, in this first section we were primarily interested in confirming that the cloud persists in time and with $N$, and for this purpose the time average is a suitable substitute for $\E[ w_i(t)]$. 

\subsection{Verification of Rough LE}\label{subsec:rough-LE} We now confirm the limits~\eqref{V},~\eqref{E}, and~\eqref{Ef} hold, and that the limit $\w(t,x)$ in~\eqref{E} is continuous in $x$. Figure~\ref{fig:V-rough} confirms~\eqref{V}. In fact, the figure shows that 1) the window average variances go to zero uniformly across the lattice, and 2) the second $\epsilon\to0$ limit is unnecessary. For the variance to go to zero, we simply need to average over increasingly large window sizes. Indeed, keeping $\epsilon>0$ fixed, the variance decays as the window size $2N\epsilon$ increases. 
\begin{figure}
\centering
\vspace{-0.9cm}
\includegraphics[width=0.6\textwidth]{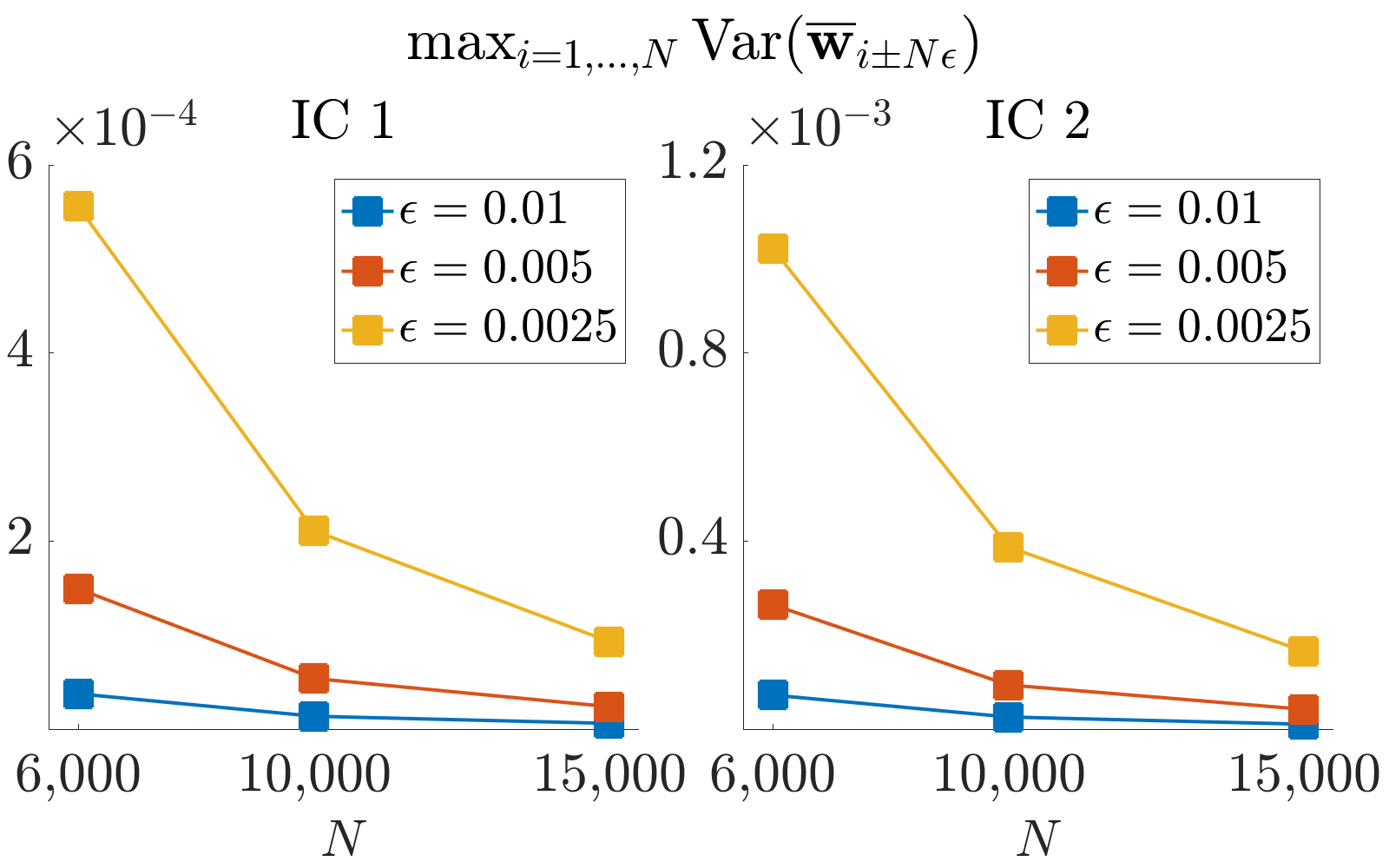}
\caption{Confirmation of~\eqref{V}. In fact, these plots suggest that the limit holds uniformly in $x$. We also see that the outer $\epsilon\to0$ limit is unnecessary: for each fixed $\epsilon>0$, the variances decay as $N$ increases.}
\label{fig:V-rough}
\vspace{-0.5cm}
\end{figure}

Next, we check~\eqref{E}. Doing so numerically is somewhat delicate, since for any finite $N$, if we take $\epsilon$ small enough the window average $\E\l[\ol{\bf w}_{\idxsetx }\r]$ will necessarily revert back to being rough and non-converging. Since we cannot take $N\to\infty$ numerically, we use the following proxy for convergence as $N\to\infty,\epsilon\to0$. For each $N$, we choose a ``good'' $\epsilon(N)$ (see details in Appendix~\ref{app:epsilon-of-N}), and then check that $\E[\ol{\bf  w}_{N(x\pm\epsilon(N))}]$ converges as $N\to\infty$. Figure~\ref{fig:E-rough} confirms that this convergence indeed holds. The continuity of the limit is clear from the plot. 
\begin{figure}
\centering
%\vspace{-0.1cm}
\includegraphics[width=0.75\textwidth]{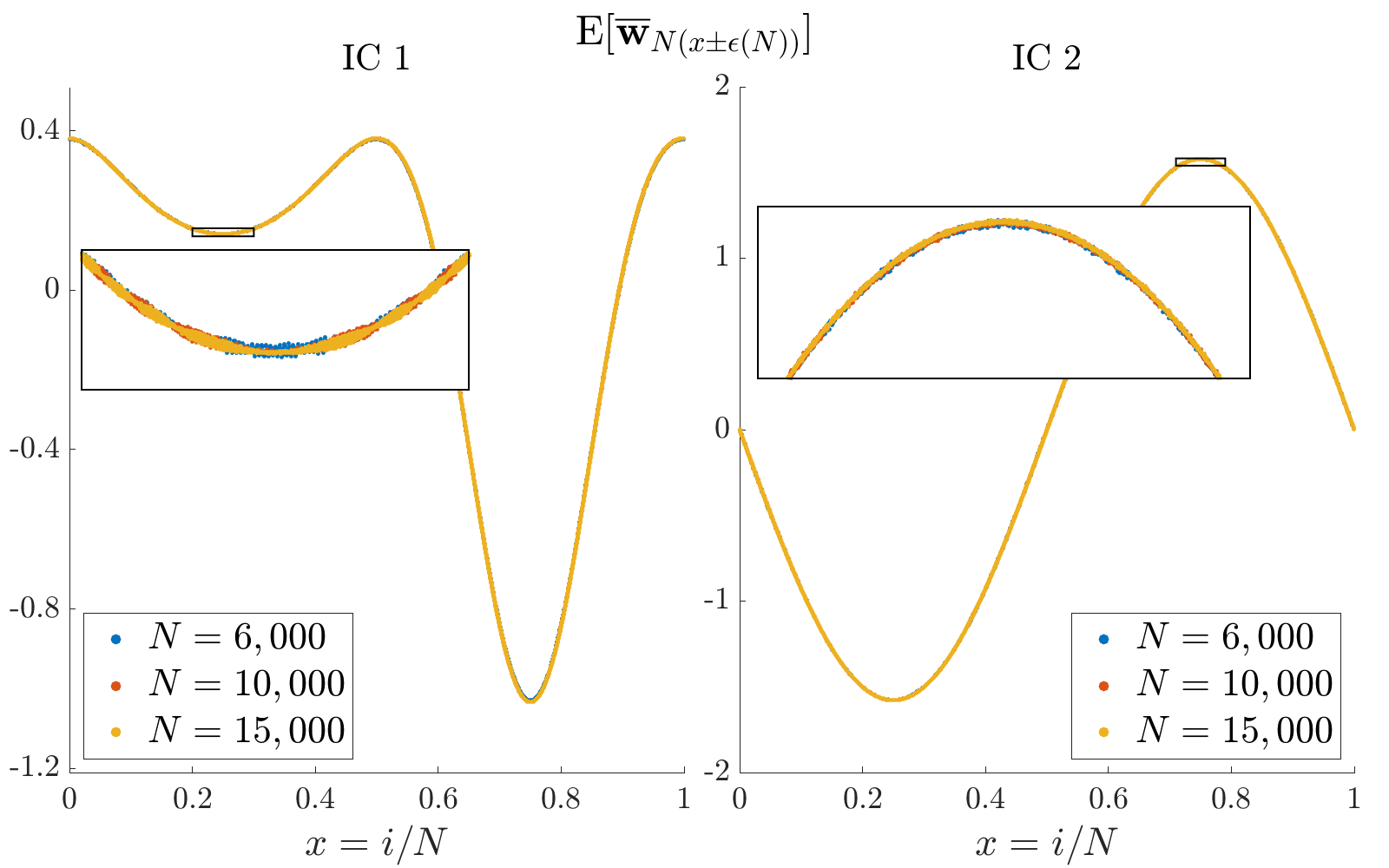}
\caption{Confirmation of~\eqref{E}. We take the limit by choosing a ``good" $\epsilon(N)$ for each $N$, and taking $N\to\infty$. For the procedure to choose $\epsilon(N)$, see Appendix~\ref{app:epsilon-of-N}.}
\label{fig:E-rough}
\vspace{-0.45cm}
\end{figure}

Finally, let us check~\eqref{Ef}. If~\eqref{Ef} holds, then for all initial conditions, for all $i=1,\dots, N$, and for all $t>0$, the pairs $(\E \ol{\bf w}_{i\pm N\epsilon},\; \E \bar f({\bf w}_{i\pm N\epsilon}))$ should approximately lie on the curve $\{(\omega, \hat f(\omega)\mid\omega\in\R\}$ for some function $\hat f$. As $N$ increases and $\epsilon$ decreases, these points should adhere to this curve more closely. Figure~\ref{fig:Ef-rough} verifies~\eqref{Ef} for $f(\omega)=r(\omega) = e^{-2K-2K\omega}$ (the rate function), and $f(\omega)=\omega^2$. As in the verification of~\eqref{E}, we choose an appropriate $\epsilon(N)$ for each $N$. To emphasize the universality of $\hat f$ across space, time, and initial conditions, we plot all the following points, for a given $N$, in the same color:
$$\l\{\bigg(\E \ol{\bf w}^{(k)}_{i\pm N\epsilon}(t_k),\; \E \bar f\big({\bf w}^{(k)}_{i\pm N\epsilon}(t_k)\big)\bigg)\;\bigg\vert\; i=1,\dots, N;\; k=1,2\r\},$$ where $w^{(1)}$ and $w^{(2)}$ denote the processes generated from ICs 1 and 2, respectively, and $t_1 =2\times 10^{-14}$, $t_2=10^{-15}$.
\begin{figure}
\centering\vspace{-1.1cm}
\includegraphics[width=\textwidth]{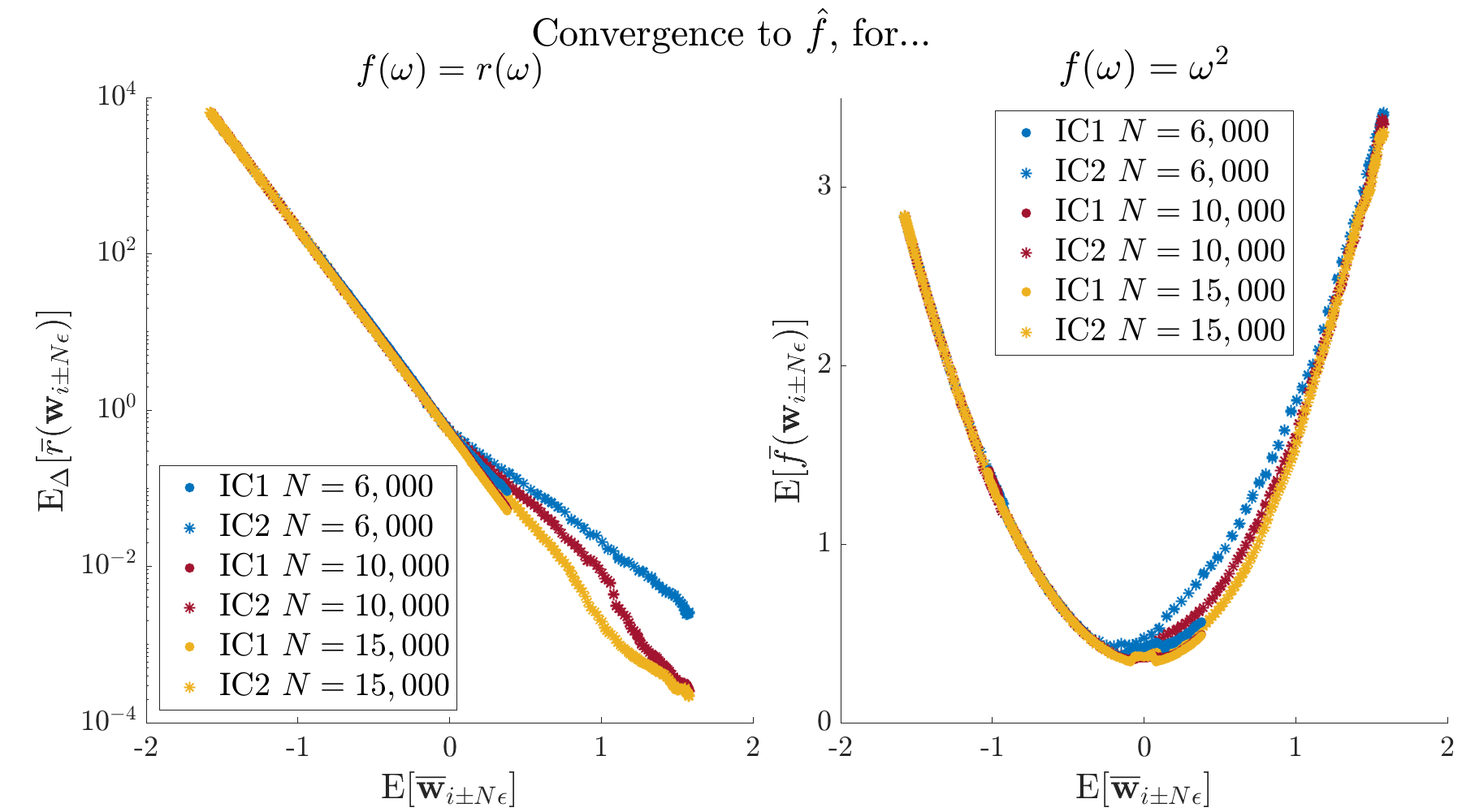}
\caption{Confirmation of~\eqref{Ef} for $f(\omega)=r(\omega)$ and $f(\omega)=\omega^2$.}
\vspace{-0.45cm}
\label{fig:Ef-rough}
\end{figure}
We see from the figure that as $N$ increases, the points corresponding to a given $N$ do in fact line up more closely on a single curve. Recall from Figure~\ref{fig:no-Efprime} that no universal function $\hat f$ sending $\E[ w_i]$ to $\E[f( w_i)]$ exists. It is therefore remarkable that upon window averaging the domain and the range there \emph{does} exist a universal function, i.e. which maps $\E \ol{\bf w}_{i\pm N\epsilon}$ to $\E \bar f({\bf w}_{i\pm N\epsilon})$. 

Note that the points to the left of zero, i.e. for which $\E\ol{\bf w}_{i\pm N\epsilon}<0$, have lined up much more neatly on a single curve than points to the right of zero, for which $\E \ol{\bf w}_{i\pm N\epsilon}>0$. This indicates that the rate of LE onset, i.e. the rate at which the finite $N$ quantities in~\eqref{V},~\eqref{E}, and~\eqref{Ef} converge to their limits for a given time $t$, is not uniform across $x$. In particular, the rate of LE onset at a point $x$ seems to depend on the value of $\E  \ol{\bf w}_{\idxsetx }(t)$ (i.e. on the value of $\w(t,x)$, to which $\E  \ol{\bf w}_{\idxsetx }(t)$ converges). There is a straightforward informal explanation for this observation. It is intuitively clear that local equilibration (i.e. mixing) requires movement of particles, and there is little movement in regions where the rates are very small. From the left panel of Figure~\ref{fig:Ef-rough}, we see that on average, the rates become exponentially smaller as $\E\ol{\bf w}_{\idxsetx}$ increases. Therefore, the larger the value of $\E \ol{\bf w}_{\idxsetx }$, the slower the local equilibration.

\subsection{Qualitative Study}\label{subsec:qual}
We conclude the section with some further qualitative observations about the $ \wN$ process's local equilibrium state, based on the form of $i/N\mapsto \E[f_\Delta( w_i)]$ profiles. Figure~\ref{fig:qual} depicts the expectations $\E_\Delta[ w_i]$, $\E_\Delta[ w_i^2]$ and $\E_\Delta[r( w_i)]$ for the processes generated from the two ICs, and $N=15,000$. 
\begin{figure}
\centering
\includegraphics[width=0.8\textwidth]{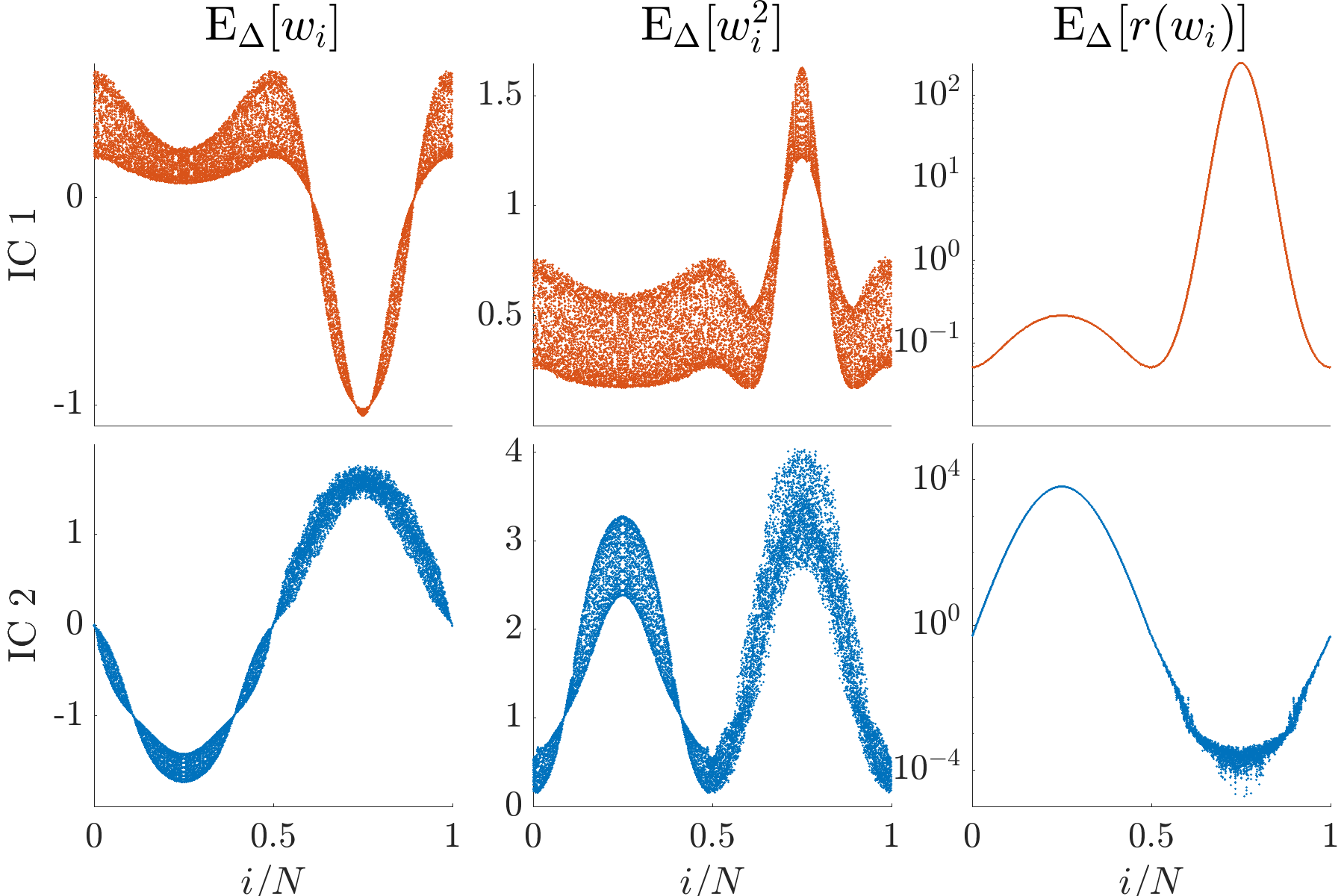}
\caption{Observable expectations}
\label{fig:qual}
\end{figure}

First, consider the bottom row, associated to IC 2. Note that for each of the three observables, there is a qualitative difference between the form of the profile in the region $(0.6, 0.9)$ (approximately) and the form elsewhere. For example, outside this region we observe that the $(i/N,\E_\Delta [w_i])$ cloud narrows to a point when $\E_\Delta[ w_i]$ takes the value 0 or -1 (an integer) But inside this region, the cloud does not narrow to a point near integer values of the range (i.e. where $\E_\Delta[ w_i]=1$). 

This has to do with the fact that the rate expectations in the region $(0.6,0.9)$ are on the order of $10^{-4}$, which is extremely small relative to the largest rate expectation on the order of $10^4$. This is shown in the bottom rightmost panel. As we discussed above, we expect LE onset to be slowest where the rates are smallest, so the qualitative form of observable expectations in $(0.6, 0.9)$ is \emph{not} characteristic of the LE state. Discounting this region for the second process, we make the following three observations about the form of these expectations. 
\begin{enumerate}
\item \textbf{The rate expectations $\E[r( w_i)]$ are smoothly varying.} This is not true of all observable expectations, since
\item \textbf{The means $\E w_i$ are roughly varying,} as are the second moments.
\item \textbf{The $(i/N,\E w_i)$ cloud narrows to a point near integer values of the range.}
\end{enumerate}
The smoothness of the rate expectations is surprising given that the first and second moments are so roughly varying. In the next section, we will give a partial explanation for this by showing that $\E[ w_i]$ are roughly varying \emph{because} $\E[r( w_i)]$ are smoothly varying. We will also explain the third observation.  

In the present section, we gleaned useful information about the LE state by probing~\eqref{V},~\eqref{E} and~\eqref{Ef} numerically, without ever referring to the explicit form of the distribution. In the next section, we will obtain a deeper understanding of the rough LE state using the explicit, limiting form of $\mathrm{Law}( w_i)$ (as confirmed by numerics, but without proof). Knowing the LE distribution explicitly is a rare privilege; for many interacting particle systems, there is no closed form expression for the LE state. 

When the distribution is unknown, studying~\eqref{V},~\eqref{E}, and~\eqref{Ef} numerically is a useful tool to understand essential properties of the LE state. In fact, numerical simulations like the ones shown here can be used to derive the PDE limit for IPS in which the LE state, and therefore also the PDE, have no explicit analytic representation. We do this in~\cite{metrop_pde} for a crystal surface dynamics with Metropolis-type rates.

\section{A Look under the Hood at the Rough LE State}\label{sec:theory} %\comment{To Do: better intro to this section. Think about whether you don't actually need $\lambda_i - \lambda_{i-1}=\omega_i$, but rather only that $\lambda_i$ are a.u.d.} 
Although the previous section shows that the crucial limits~\eqref{V},~\eqref{E}, and~\eqref{Ef} are satisfied, several questions remain unresolved. The key question we address in this section is the following: given that $\mathrm{Law}( w_i)$ do not lie in a mean-parameterized measure family, where does the function 
$$\hat f: \E  \ol{\bf w}_{i\pm N\epsilon}\mapsto \E \bar f({\bf w}_{i\pm N\epsilon})$$ come from? To answer this and other questions, we begin in Section~\ref{subsec:arr-prop} by writing down the form of $\Law(w_i)$ explicitly, and confirming this form numerically. At the end of~\ref{subsec:arr-prop}, we summarize all assumptions used in the rigorous proofs in Sections~\ref{subsec:omega-tilde-w} and~\ref{subsec:marg-av}. These include the numerically confirmed limit~\eE, and a slightly stronger version of what we can confirm numerically about $\Law(w_i)$. In Section~\ref{subsec:omega-tilde-w}, we rigorously prove further properties of the LE state using the assumptions. In particular, we address Observations 1,2,3 of Section~\ref{subsec:qual}, explaining how the intriguing shape of the $i/N\mapsto \E w_i$ profile arises, and how this shape is related to the smoothness of the rate expectations. The heart of this section is~\ref{subsec:marg-av}, in which we rigorously prove that~\eEf~is satisfied thanks to mesoscopic averaging of $\Law(w_i)$. We conclude in Section~\ref{subsec:arr-discush} by synthesizing how the rough LE arises, and how the rough LE affects the PDE limit. 

\subsection{The form of Law$(\wN)$, via Law$(\zN)$}\label{subsec:arr-prop} Although we have been studying $\wN$ independently of $\zN$ and $\hN$, we will now reintroduce the $\zN$ process. This will allow us to deduce the form of $\Law(\wN)$ from $\Law(\zN)$, which we know more about. Let $\zN(t) = (z_1(t),\dots,  z_N(t))$ be any process satisfying $z_i(t)-z_{i-1}(t)=w_i(t)$. %\beq\label{zfromw} z_i(t) - z_{i-1}(t) = w_i(t), \quad \sum_{i=1}^N z_i(t)= S_0.\eeq 
This process $\zN$ will have generator $\LN$ defined as in~\eqref{w-gen}, but with $\mbf w, \mbf w^{i,j}$ replaced by $\mbf z,\mbf z^{i,j}$, respectively. %The profile $\mbf z^{i,j}$ is the first finite difference profile of $\mbf h^{i,j}$, and will only depend on $\mbf z$. 
Recall from Section~\ref{subsec:dynamics} that the ergodic invariant measures of the $\zN$ process (i.e. the global equilibrium measures) take the form $\Phi_N(\mbf z\mid \sum_iz_i=S_0, \sum_iiz_i=S_1)$, where $\Phi_N(\bz) \propto e^{-KH(\bz)} = \e(-K\sum_{i=1}^Nz_i^2).$ By a conditional limit theorem, we should have 
\beqs\label{rho-approx-0}
\Phi_N(\mbf z\mid &\sum_iz_i=S_0, \sum_iiz_i=S_1)\\
&\approx \frac{1}{\calZ_{a_0,a_1}}\e\bigg(-K\sum_{i}z_i^2+2Ka_0\sum_iz_i + 2Ka_1\sum_iiz_i\bigg)\eeqs for $N\gg 1$, where $a_0$ and $a_1$ are functions of $S_0$ and $S_1$. Define the one-parameter family $\rho_K[\cdot]$ by \beq\label{gibbs-dist}\gibbs K \lambda (n) = \frac{e^{-K(n-\lambda)^2}}{\calZ_K(\lambda)},\quad \quad\calZ_K(\lambda)=\sum_{m=-\infty}^\infty e^{-K(m-\lambda)^2}.\eeq  By completing the square in the exponent in~\eqref{rho-approx-0}, we see that the measure on the second line takes the form $\rhoN_{\bml_N}= \otimes_{i=1}^N\rho_K[a_0+ia_1]$. Thus, in global equilibrium, the parameters $\lambda_i=a_0+ia_1$ associated to $z_i$ form a single line, determined by the two global parameters $a_0$ and $a_1$. In \emph{local} equilibrium, we expect the distribution of $\zN(t)=\zNtild(N^4t)$ to also take the form $\otimes_{i=1}^N\rho_K[\lambda_i]$, where $\lambda_i\approx a_0(x)+ia_1(x)$ are linear in each mesoscopic region $i\in\idxsetx$. We make the following conjecture which, for now, does not assume the $\lambda_i$ have any special structure.
\begin{conjecture}[Unverified]\label{conj:z} For each $t$ there exist $\bml_N(t) = (\lambda_1(t),\dots, \lambda_N(t))$ such that as $N\to\infty$, the below approximation becomes accurate with respect to some divergence (e.g. relative entropy):  
\beq\label{rho-approx}\Law(\zN(t))\approx \rhoN_{\bml_N(t)}(\bz) \propto \prod_{i=1}^N\rho_K[\lambda_i(t)](z_i),\quad N\to\infty.\eeq 
%Here, both measures $\rhoN_t$ and $\rhoN_{\bml_N(t)}$ are supported on $\Z^N$. 
\end{conjecture}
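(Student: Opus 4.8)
The plan is to attack the conjecture by the relative entropy method of Yau, exploiting two structural features already established for $\zN$: its reversibility with respect to the product measure $\Phi_N$ (detailed balance, Section~\ref{subsec:dynamics}), and the gradient structure of the dynamics reflected in the fourth-difference form~\eqref{L-pi-w}. The target measure $\rhoN_{\bml_N}$ with \emph{affine} parameters $\lambda_i = a_0 + i a_1$ is precisely the grand-canonical ensemble obtained by tilting $\Phi_N$ by the two conserved quantities $S_0 = \sum_i z_i$ and $S_1 = \sum_i i z_i$: since $\gibbs K \lambda(n) \propto e^{-Kn^2}e^{2K\lambda n}$, the product $\prod_i \gibbs K {\lambda_i}(z_i)$ tilts $\Phi_N$ by $2Ka_0 S_0 + 2Ka_1 S_1$. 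This is the natural reference because the dynamics conserves $S_0$ and $S_1$. Accordingly I would construct $\lambda_i(t) = a_0(t,i/N) + i\,a_1(t,i/N)$, where $(a_0,a_1)(t,\cdot)$ are the macroscopic chemical potentials associated to the hydrodynamic slope profile; in the rough regime $a_1 = O(1)$ is conjugate to the curvature and $i\,a_1 = O(N)$ supplies the large slope scale.

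First I would set $H_N(t) = H(\Law(\zN(t)) \mid \rhoN_{\bml_N(t)})$ and invoke Yau's entropy dissipation inequality: dropping the nonnegative Dirichlet form,
\[
\frac{d}{dt} H_N(t) \le \int \left( N^4\, \frac{\LN^\ast \psi_t}{\psi_t} - \partial_t \log \psi_t \right) f_t \, d\rhoN_{\bml_N(t)},
\]
where $\psi_t$ is the density of $\rhoN_{\bml_N(t)}$, $f_t = d\Law(\zN(t))/d\rhoN_{\bml_N(t)}$, and $\LN^\ast$ is the adjoint with respect to counting measure. Because $\log \psi_t = \sum_i[-K(z_i-\lambda_i(t))^2 - \log\calZ_K(\lambda_i(t))]$, both terms in the integrand are spatial sums of local functions: $\partial_t \log \psi_t$ produces $\sum_i 2K(z_i - \lambda_i)\dot\lambda_i$ up to lower order, while $N^4\LN^\ast\psi_t/\psi_t$, after summation by parts using the gradient structure~\eqref{L-pi-w}, produces a spatial sum pairing the rate current $r(w_j)$ against discrete derivatives of the weights $(z_i - \lambda_i)$.

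Then comes the matching step: I would fix the profile $(a_0,a_1)$ — equivalently $\dot\lambda_i$ — so that the deterministic drift of the reference cancels the \emph{local-equilibrium average} of the rate-current term. This forces $\dot\lambda_i$ to solve the spatial discretization of the slope PDE $z_t = \partial_x^3 \hat r(z_x)$ (equivalently $w_t = \partial_x^4 \hat r(w)$ of~\eqref{PDE-w}), with $\hat r$ entering exactly through the local-equilibrium rate average. After this cancellation the integrand reduces to mesoscopic averages of the \emph{fluctuations} of $r(w_j)$ about their local grand-canonical means. The remaining steps are a one-block/two-block replacement, bounding these fluctuation averages by $C\,H_N(t) + o(N)$ using the entropy inequality and the explicit product structure of $\rhoN_{\bml_N}$, followed by Gronwall to conclude $H_N(t) = o(N)$, provided the initial data is well prepared so that $H_N(0) = o(N)$.

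The main obstacle — and the reason this remains a conjecture — is the replacement step with discrete slopes. The one-block estimate requires an \emph{equivalence of ensembles} quantifying how the microcanonical measure conditioned on the two empirical sums $(S_0,S_1)$ in a mesoscopic block approximates $\otimes \gibbs K {\lambda_i}$ with affine $\lambda_i$; this is the conditional local central limit theorem invoked informally at~\eqref{rho-approx-0}, but now needed \emph{quantitatively and uniformly} over parameters $\lambda_i = O(N)$. For the discrete family $\gibbs K {\cdot}$ the partition function $\calZ_K(\lambda)$ and the mean $m_1(\gibbs K \lambda)$ are not affine in $\lambda$ but carry a theta-function periodic modulation — precisely what makes $\E w_i = m_1(\gibbs K {\lambda_i}) - m_1(\gibbs K {\lambda_{i-1}})$ roughly varying. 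Controlling this modulation uniformly, so that it averages out at the mesoscopic scale while the one-block error still vanishes, is the crux; it is the discrete-height analogue of the difficulty that leaves the discrete case open while the continuous-height models of~\cite{funaki1997motion, Nishikawa, savu} are tractable. I would further expect the simultaneous two-conservation-law structure ($S_0$ and $S_1$) to complicate establishing a sharp equivalence of ensembles relative to the single-conservation setting of the standard references.
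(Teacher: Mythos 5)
You have not proved the statement, and neither does the paper: Conjecture~\ref{conj:z} is explicitly labeled ``Unverified,'' and the paper's only support for it is the heuristic conditional-limit-theorem motivation at~\eqref{rho-approx-0} (the ergodic measures are restrictions of $\Phi_N$ to level sets of $S_0,S_1$, and tilting by these conserved quantities gives the affine-$\lambda_i$ product form) together with numerical verification of downstream consequences for the induced marginals of $w_i$ (Properties 1.1 and 1.2). So there is no paper proof to match, and your submission must stand on its own as a proof --- which it is not: the decisive steps (the entropy dissipation bound, the matching of $\dot\lambda_i$, the one-block/two-block replacement, the Gronwall closure) are named but not executed, as you yourself acknowledge. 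To your credit, the plan is well aligned with the paper's reasoning: your grand-canonical ansatz $\lambda_i = a_0 + i\,a_1$ is exactly the structure behind~\eqref{rho-approx-0}; it is consistent with the paper's later findings (smoothly varying $\omega_i = \lambda_i - \lambda_{i-1}$ as in Assumption 2.2, with $\lambda_i \bmod 1$ roughly varying and asymptotically equidistributed); and it correctly avoids the misstep, corrected in Section~\ref{subsec:arr-discush}, of taking $\lambda_i = \lambda_D(\E z_i)$ with smoothly varying $\E z_i$, which the paper shows is wrong by $O(1)$ --- fatal when one needs the differences $\lambda_i - \lambda_{i-1}$.

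Beyond the equivalence-of-ensembles crux you flag, there is at least one concrete point where your scheme fails as written. The entropy-inequality step that is supposed to bound the fluctuation terms by $C H_N(t) + o(N)$ uses estimates of the form $\int V f_t\,d\rhoN_{\bml_N} \leq \gamma^{-1}\bigl(\log \E_{\rhoN_{\bml_N}} e^{\gamma V} + H_N(t)\bigr)$, with $V$ built from the centered rates $r(w_j) - e^{-2K\omega_j}$ carrying $O(1)$ per-site coefficients. This requires exponential moments of the rates under the reference measure, i.e.\ finiteness of $\E\exp\bigl(c\,e^{-2K w}\bigr)$ for $w \sim \mu_K[\omega,\lambda]$; since $\mu_K$ has Gaussian-type tails in both directions (see~\eqref{mu-K}), this double-exponential moment diverges for every $c>0$, so the textbook Gronwall closure is unavailable for Arrhenius rates without a truncation scheme backed by a priori control of curvature extremes --- itself an open problem. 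Relatedly, your one-block step needs spectral-gap or log-Sobolev bounds uniform over mesoscopic regions where the rates are exponentially small, while the paper's numerics (the region $(0.6,0.9)$ in Figure~\ref{fig:qual}, with $\E[r(w_i)] \sim 10^{-4}$) show equilibration there is dramatically slower, so no uniform-in-$\lambda$ relaxation estimate can hold in the naive form; and the second conserved quantity $S_1 = \sum_i i z_i$ is not translation invariant, which breaks the shift-averaging that standard one-block arguments rely on. In short: a sensible research program, faithful to the paper's heuristics, but not a proof, and the obstacles you name are not the only ones that would have to be overcome.
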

Note that we can think of $\gibbs K \lambda$ as the discrete (integer-supported) version of the normal distribution $\mathcal N(\lambda, 1/2K)$. Define
$$u_D(\lambda)=m_1(\rho_K[\lambda])=\sum_{n=-\infty}^\infty ne^{-K(n-\lambda)^2}\bigg/\calZ_K(\lambda)$$ the first moment of $\gibbs K\lambda$, and let $\lambda_D =u_D^{-1}$. For comparison, we let $u_C(\lambda)=\lambda$, the first moment of the distribution $\mathcal N(\lambda, 1/2K)$, and $\lambda_C=u_C^{-1}$, also the identity function. 

\begin{wrapfigure}{r}{0.5\textwidth}
\centering
\vspace{-0.8cm}
\includegraphics[width=0.5\textwidth]{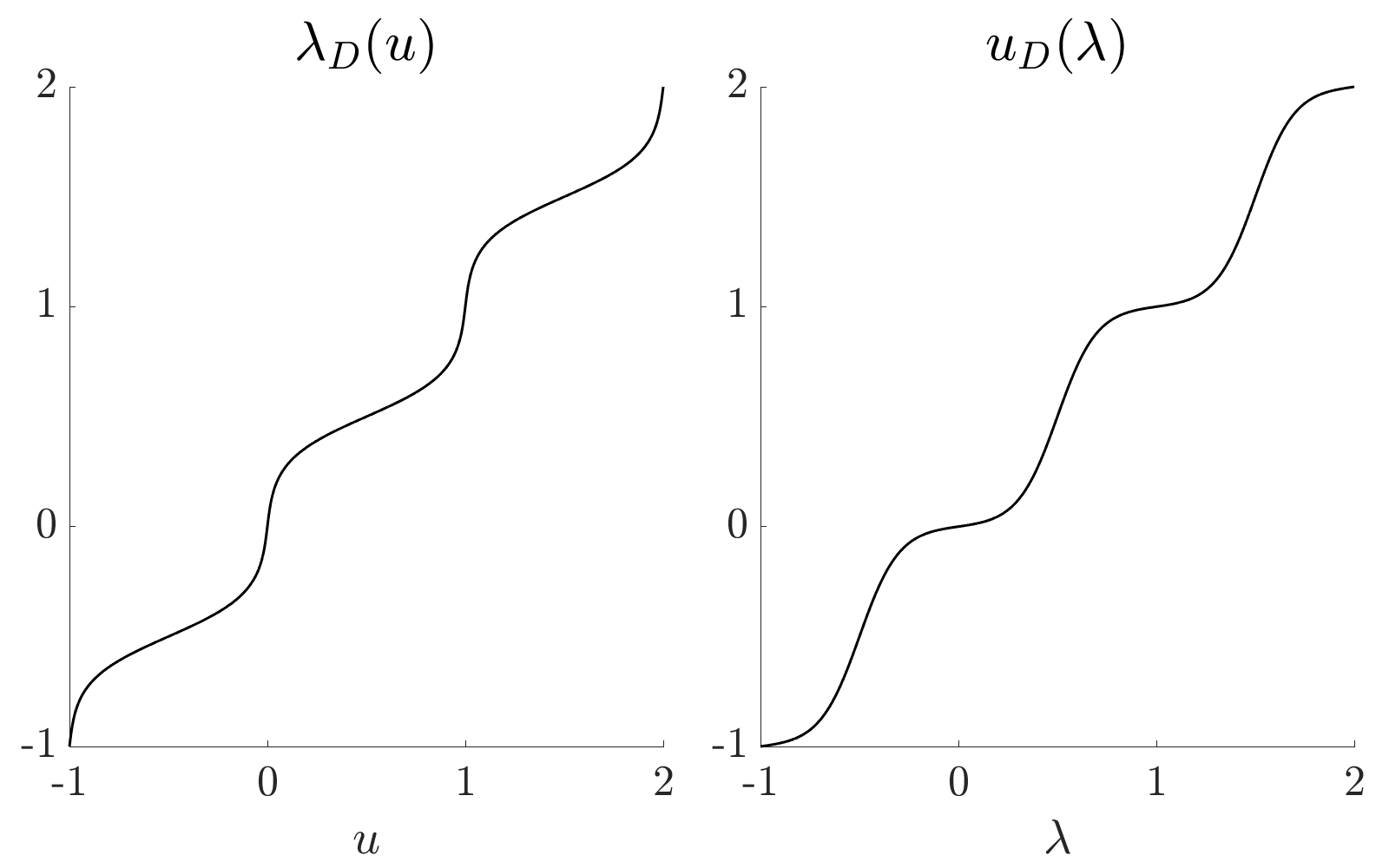}
\vspace{-0.6cm}
\caption{$u_D(\lambda) = m_1(\rho_K[\lambda])$, and $\lambda_D=u_D^{-1}$, when $K=5$. The oscillations are due to the discreteness of the support of $\rho_K[\lambda]$.}
\label{fig:u-lambda}
\vspace{-0.7cm}
\end{wrapfigure} 

Figure~\ref{fig:u-lambda} shows a plot of $u_D(\lambda)$ and $\lambda_D(u)$. Both functions implicitly depend on $K$. The oscillations of $u_D$ and $\lambda_D$ around the identity functions $u_C$ and $\lambda_C$ are a consequence of the restricted support of $\gibbs K \lambda$ (restricted compared to the continuous normal distribution).

We will now partially confirm that the $ \wN$ distribution induced by the conjectured $\zN$ distribution~\eqref{rho-approx} is correct. Since we are interested in $ \wN$ through the lens of~\eqref{V},~\eqref{E}, and~\eqref{Ef}, we do not need to know the full, $N$ site distribution $\Law( \wN)$. Indeed, the limit~\eqref{V} only depends on the two site distributions $\Law( w_i,  w_j)$ through $\Cov( w_i,  w_j)$, and the limits~\eqref{E} and~\eqref{Ef} only depend on the one site distributions $\Law( w_i)$ through $\E[ w_i]$, $\E[f(w_i)]$. 

That $\mathrm{Law}(\zN)$ is given by a product measure in the $N\to\infty$ limit implies that we should have $\Corr( w_{Nx+i},  w_{Nx+i+k})\to0$ as $N\to\infty$ for all $i$ and lag $k\geq2$. Figure~\ref{fig:corrs-rough} confirms the correlation decay: the plots depict $\max_{2\leq k\leq 9}\l|\Corr( w_{Nx},  w_{Nx+k})\r|$ as a function of $x=i/N$ with increasing $N$, for the two processes generated from IC 1 and IC 2.  In addition to confirming the correlation structure predicted by~\eqref{rho-approx},  Figure~\ref{fig:corrs-rough} also shows that $\max_{k\geq 2}\l|\Corr( w_{Nx},  w_{Nx+k})\r|$ is a useful metric for the rate of convergence to LE with $N$ in different spatial regions. Indeed, in the left panel we see that for the IC 1 - generated process, the correlations are lowest in the region $i/N\in (0.6, 0.9)$, where the rates are highest (the rates are shown in the upper rightmost panel in Figure~\ref{fig:qual}). This is in line with the intuition that regions with large jump rates are expected to equilibrate more quickly. For IC 2, the correlations are lowest in the region $i/N\in (0.1, 0.4)$, also where the rates are highest (shown in the lower rightmost panel in Figure~\ref{fig:qual}).
%\subsection{Onset of Rough LE}
%In Section~\ref{}, we will investigate the LE properties~\eqref{V},~\eqref{E}, and~\eqref{Ef} at a fixed time $t=t_1$ for IC 1, and $t=t_2$ for IC 2. We will see that as $N$ increases, the finite $N$ quantities in~\eqref{V},~\eqref{E}, and~\eqref{Ef} converge to their limits for each $x\in\unit$. However, for~\eqref{Ef} in particular, the rate of convergence to the limit will differ across $x\in\unit$. This is because local equilibration happens more slowly in some regions than in others. Before verifying~\eqref{V},~\eqref{E}, and~\eqref{Ef}, it will be instructive to get a sense of the rate of equilibration of different spatial regions. To do so, we quantify ``proximity" to LE across $x\in\unit$ for increasing $N$, at the fixed time $t=t_1$ or $t=t_2$. A convenient measure of proximity to LE at $x$ is $\max_{k\geq 2}\l|\Corr( w_{Nx},  w_{Nx+k})\r|$. (We will explain in Section~\ref{} why lag 1 correlations do not decay with $N$, but for all $k\geq 2$, the correlations should go to zero.) Figure~\ref{fig:corrs-rough} depicts this quantity (capping $k$ at 9) with increasing $N$, for the two processes generated from IC 1 and IC 2.  
\begin{figure}
\centering
\includegraphics[width=0.7\textwidth]{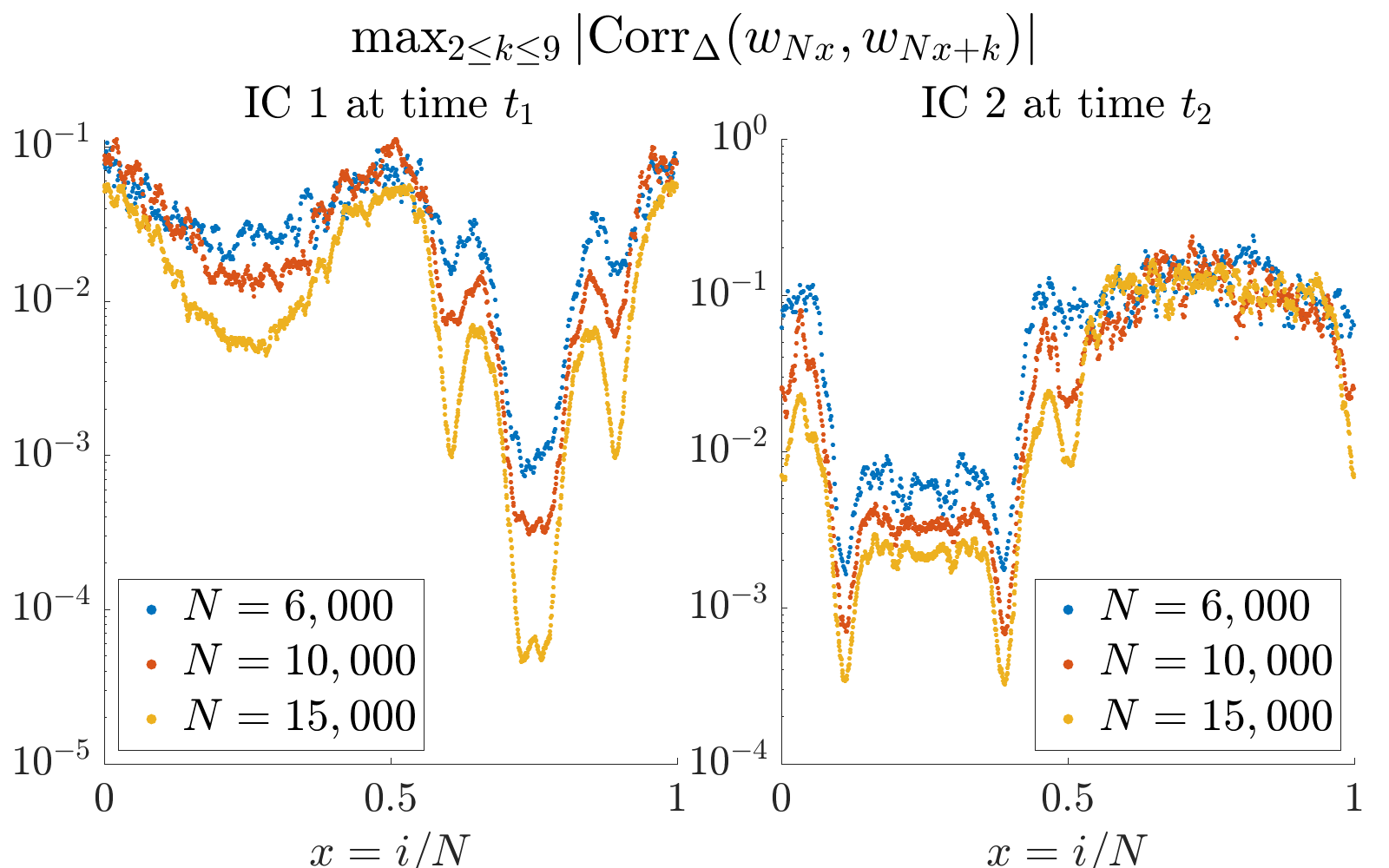}
\caption{Confirming that the correlations $\Corr( w_i,  w_{i+k})$ decay for all $k\geq 2$, as predicted by~\eqref{rho-approx}. The figure also reveals that correlations are a useful metric for the rate of convergence of different spatial regions to LE.} %For both processes, the regions of low correlation correspond to the regions where the rates are highest (see the rightmost panels in Figure~\ref{fig:qual}), in line with the intuition that regions with largest jump rates equilibrate fastest.}
\label{fig:corrs-rough}
\vspace{-0.85cm}
\end{figure}
This is all we have to say about pair distributions $\Law( w_i,  w_{i+k})$. From now on, we are exclusively interested in $\Law( w_i)$. 

Let $\mu_K[\omega,\lambda]$ denote the distribution on $Z-Z'$ induced by $(Z', Z)\sim \rho_K[\lambda-\omega]\otimes\rho_K[\lambda]$, so that
\beqs\label{muK-approx}
\Law(z_{i-1}, z_i)&\approx \rho_K[\lambda_{i-1}]\otimes\rho_K[\lambda_i]\\\Rightarrow \Law( w_i) &\approx \mu_K[\omega_i, \lambda_i],\eeqs where $$\omega_i = \lambda_i- \lambda_{i-1}.$$
Note that if $Z'$ and $Z$ were independent \emph{continuous} normals $\mathcal N(\lambda-\omega, 1/2K)$ and $\mathcal N(\lambda, 1/2K)$, then $\Law(Z-Z')$ would be $\mathcal N(\omega, 1/K)$, depending only on one parameter $\omega$ (we are treating $K$ as fixed). However, due to the discreteness of the support of $\rho_K[\lambda]$, the distribution $\mu_K[\omega,\lambda]=\Law(Z-Z')$ belongs to a two-parameter family, depending on both $\omega$ and $\lambda$. We have
\beqs\mu_K[\omega,\lambda](n)&=\mathbb P(Z-Z' = n)\\ &= \sum_{m=-\infty}^\infty\gibbs K {\lambda-\omega}(m)\gibbs K {\lambda}(m+n)\\
&=\sum_{m=-\infty}^\infty\frac{\e\l(-K(m-\lambda+\omega)^2-K(m+n-\lambda)^2\r)}{\calZ_K(\lambda-\omega)\calZ_K(\lambda)}
\eeqs
Expanding the squares and rearranging terms, we get
%\beqs\label{mu-K}\rho_W_i(n) =\rho(n\mid\bar Z_i,\bar W_i)&:= \mu_D(n\mid\bar W_i, K/2)Q\l(n,\bar Z_{i},\bar W_i\r),\eeqs
\beqs\label{mu-K}\mugibbs K \omega \lambda(n) =\e\l(-\frac K2(n-\omega)^2\r) Q_K\l(n,\omega,\lambda\r),\eeqs
where
\beq\label{Q}
Q_K\l(n,\omega,\lambda\r)=\frac{\calZ_{2K}\bigg(\lambda-\frac12\bigg[\omega+n\bigg]\bigg)}{\calZ_K\big(\lambda-\omega\big)\calZ_K\big(\lambda\big)}.
\eeq
Note that $Q_K$ depends on $\lambda$ through $\lambda\bmod 1$, since the normalization constant functions $\calZ_K$ and $\calZ_{2K}$ are one-periodic. For the first moment of $\mu_K[\omega,\lambda]$ we have \beq\label{m1muK}m_1(\mu_K[\omega, \lambda]) = \E Z - \E Z' = u_D(\lambda) - u_D(\lambda - \omega).\eeq This first moment depends in a non-trivial way on both $\omega$ and $\lambda$, implying that in general, expectations of observables under this distribution are not functions of the first moment alone. The same was observed to be true of the $ w_i$ in Figure~\ref{fig:no-Efprime} of Section~\ref{subsec:not-smooth}: there is no function $\hat f:\E w_i\mapsto \E[f( w_i)]$, indicating that the $ w_i$ are not mean-parameterized. 

For future reference, let us also compute the rate expectation with respect to $\mu_K[\omega,\lambda]$. Recall that $r(w) = e^{-2K - 2Kw}$. Using that $\mu_K[\omega,\lambda]=\Law(Z-Z')$ where $(Z',Z)\sim \gibbs K {\lambda-\omega}\otimes \gibbs K {\lambda}$, we get
\beqs\label{r-exp-I}
\mu_K[\omega,\lambda](r) = e^{-2K}\E\l[e^{2KZ'}\r]\E\l[e^{-2KZ}\r].
\eeqs
Now, we compute 
\beqs\label{exp-cK-lam}
\E_{Z\sim\rho_K[\lambda]}\e\l(cKZ\r) &= \frac{\sum_n \e\l(cKn\r)\e\l(-K(n-\lambda)^2\r)}{\calZ_K(\lambda)}\\
&= \e\l(cK\lambda+c^2K/4\r)\frac{\sum_n\e\l(-K(n-\lambda-c/2)^2\r)}{\calZ_K(\lambda)}\\
&=  \e\l(cK\lambda+c^2K/4\r)\frac{\calZ_K(\lambda+c/2)}{\calZ_K(\lambda)}.
\eeqs 
$\calZ_K(\lambda)$ has period 1, so when $c$ is an even integer, the ratio of $\calZ_K$'s is equal to 1. Using~\eqref{exp-cK-lam} in~\eqref{r-exp-I} with $c=\pm2$, we get
\beqs\label{r-exp}
 \mu_K[\omega,\lambda](r) =e^{-2K}\e\l(K+2K(\lambda-\omega)\r)\e\l(K-2K\lambda\r)=\e\l(-2K\omega\r).
\eeqs
We see that the rate observable has the rare property that its expectation with respect to $\mu_K[\omega,\lambda]$ does not depend on $\lambda$! We return to this point later on.

We will now show numerically that indeed, there exist $\lambda_1,\dots, \lambda_N$ such that the approximation $\Law( w_i)\approx \mu_K[\omega_i,\lambda_i]$ becomes increasingly accurate in the limit. This will be sufficient for our analysis, and we will not need that $\Law(z_{i-1}, z_i)\approx \rho_K[\lambda_{i-1}]\otimes\rho_K[\lambda_i]$. Note that in the above calculations, it was convenient to use the representation $\mu_K[\omega,\lambda] = \Law(Z-Z')$ for $(Z',Z)\sim\rho_K[\lambda-\omega]\otimes\rho_K[\lambda]$ to compute the rate expectation $\mu_K[\omega,\lambda](r)$ and first moment $m_1(\mu_K[\omega,\lambda])$. However, we could have also used the pmf~\eqref{mu-K} to do these computations. The following properties summarize our numerical findings, which include a crucial observation regarding the smoothness of the $\omega_i$.  
%Based on these correlation values, we see that the process initialized in IC 1 is overall closer to local equilibrium than the process initialized in IC 2. The figure also shows that the $x>0.5$ region equilibrates more quickly than the $x<0.5$ region for IC 1, and the reverse is true for IC 2. Recalling the $\E[ w_{Nx}]$ profiles in Figure~\ref{fig: gazon-evoln}, we see that the slow-to-equilibrate regions correspond to positive values of $\E[ w_{Nx}]$. We will discuss the reason for this in Section~\ref{subsec:qual}.  \comment{To do: discuss time averaging.}
%In this section we are primarily interested in understanding the single site marginals $\mathrm{Law}( w_i)$, which depend only on the joint distributions $\mathrm{Law}(\tilde z_i, \tilde z_{i+1})$. As such, we will not give numerical evidence for the conjecture in its full form, which would require us to sample from measures on $\Z^N$. But we do verify the below Properties~\ref{gibbs-num} in Appendix~\ref{}. The statement of Property 1.1 u
\begin{property}\label{gibbs-num} There exist $\lambda_1,\dots, \lambda_N$ such that...
\begin{itemize}
\item[1.1] The following limit holds: \vspace{-0.1cm}$$\max_{i=1,\dots, N}H\bigg(\mathrm{Law}( w_i)\,\bigg\vert\, \mu_K[\omega_i, \lambda_i]\bigg)\to0,\quad N\to\infty,$$ where $\omega_i=\lambda_{i}-\lambda_{i-1}$ and $\mu_K[\omega,\lambda]$ is the pmf defined in~\eqref{mu-K}.
%\item[1'] We have the following equality for finite $N$.\beq\label{exact-equality}\mathrm{Law}\big(\tilde z_i, \tilde z_{i+1}\big)= \gibbs K {\lambda_i}\otimes\gibbs K {\lambda_{i+1}} \eeq
\item[1.2] The $\omega_i$ vary smoothly in the neighborhood of any $x$ (recall Definition~\ref{smth-rgh}). In fact, they satisfy the stronger property that for each $t,x$, we have
$$\ol{\lim_{\epsilon\to0}}\;\ol{\lim_{N\to\infty}}\max_{\argidxsetx i}N|\omega_{i+1}-\omega_{i}|<C=C(t,x)<\infty.$$
\end{itemize}
\end{property}
Note that the smoothness of $\omega_i$ is in line with the claim above Conjecture~\ref{conj:z}, that in mesoscopic regions around $x$ we can make the approximation $\lambda_i=a_0(x)+ia_1(x)$. Figure~\ref{fig:prop} verifies these two properties numerically for the process generated from IC 1. For the details of how we found the $\lambda_i$, see Appendix~\ref{app:loc-gibbs} and Remark~\ref{remark:tautology}.
\begin{figure}
\centering
\vspace{-0.7cm}
\includegraphics[width=0.8\textwidth]{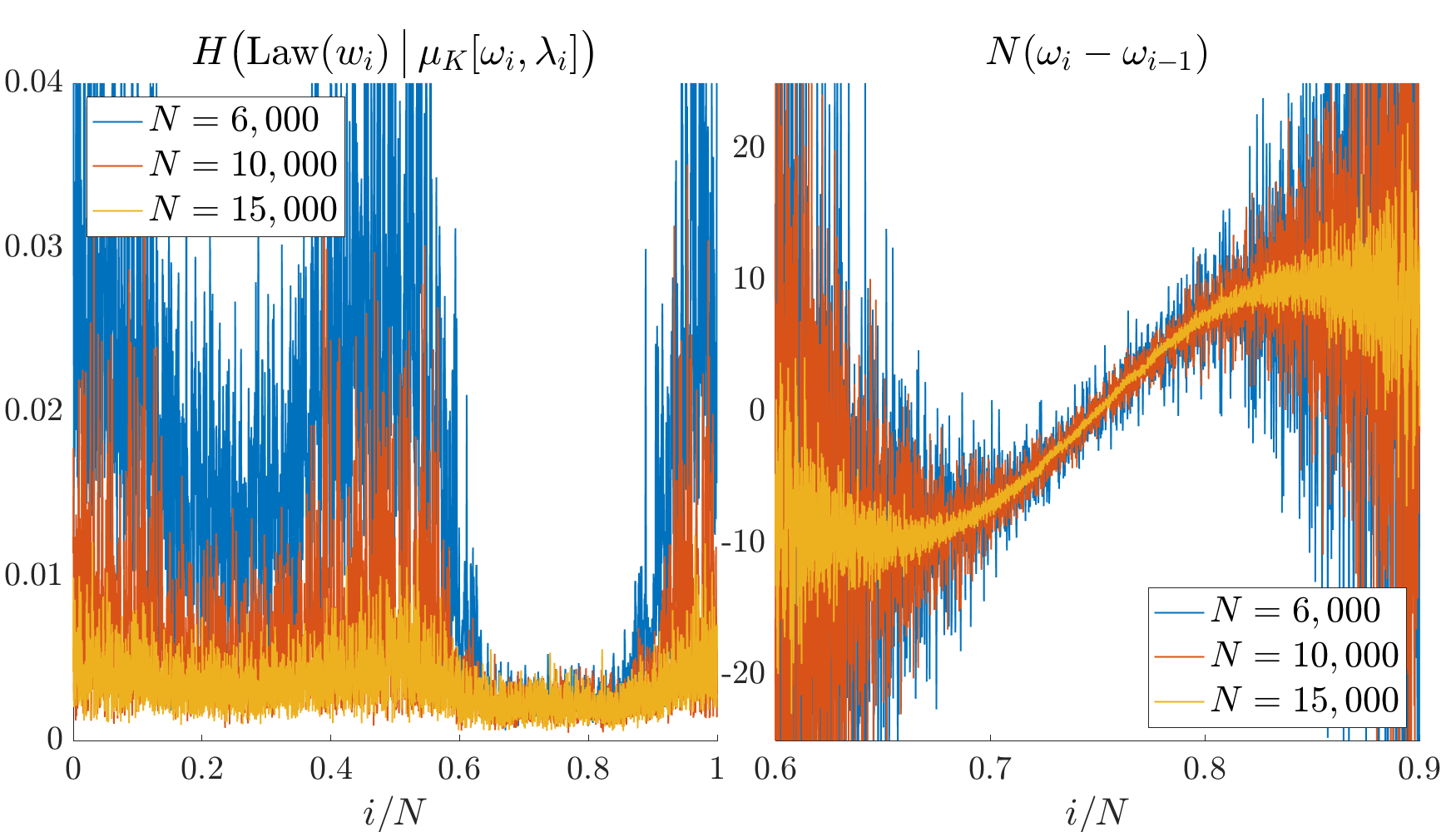}
\caption{Numerical Verification of Properties 1.1 and 1.2. For the details of how we found the $\lambda_i$, see Appendix~\ref{app:loc-gibbs}. The KL divergences are computed based on 1000 samples. The plot on the right confirms that the limit $N(\omega_i - \omega_{i-1})$ exists as $N\to\infty$, for $i/N$ in the most equilibrated region $(0.6, 0.9)$.}
\label{fig:prop}
\vspace{-0.5cm}
\end{figure}
Note that the KL divergence is lowest for $i/N\in (0.6, 0.9)$, which we identified as the region which is quickest to equilibrate, based on the correlation plot Figure~\ref{fig:corrs-rough} and the rate expectation plot in Figure~\ref{fig:qual}. The smoothness of the $\omega_i$ is very sensitive to LE onset, so we are only able to show Property 2 for $x\in (0.6, 0.9)$.

We will use these numerical results in Section~\ref{subsec:omega-tilde-w} to explain Observations 1-3 (made at the end of Section~\ref{subsec:qual}), and in Section~\ref{subsec:marg-av} to explain why~\eqref{Ef} is satisfied. For the sake of a clear presentation, let us list all the facts we will assume to be true from now on. Assumption 2.1 below is a stronger version of the numerically verified Property 1.1. Assumption 2.2 is the same as Property 2.2. The third assumption is that~\eqref{E} holds, which was shown numerically in Section~\ref{subsec:rough-LE}. 
%hese stronger assumptrions which or are simplifying assumptions based on 
%he smoothness of the $\omega_i$ is an essential feautre%Define $\lambda_i = \lambda_D(\E[\tilde z_i])$, $i=1,\dots, N$ (note that each $\lambda_i$ depends on $N$). There is an $N_0$ such that for all $N>N_0$, the following are true.
\begin{assump}\label{gibbs-theory} \textcolor{white}{.}\\
\begin{itemize}
\item[2.1] For some $N_0$, and all $N>N_0$, there exist $\lambda_i$, $i=1,\dots, N$ such that $$\mathrm{Law}\big( w_i\big)= \mu_K[\omega_i, \lambda_i]\quad\forall i=1,\dots,N,$$ where $\omega_i:=\lambda_i-\lambda_{i-1}$ and $\mu_K[\omega,\lambda]$ is the pmf defined in~\eqref{mu-K}.
\item[2.2] The $\omega_i$ satisfy
% As $N\to\infty$, the $\omega_i$ ``vary smoothly" over mesoscopic regions of the lattice. This is defined to mean that for each $t,x$ there is $C<\infty$ such that
$$\ol{\lim_{\epsilon\to0}}\;\ol{\lim_{N\to\infty}}\max_{\argidxsetx i}N|\omega_{i+1}-\omega_{i}|<C.$$
\item[2.3] There exists a continuous function $\w(t,\cdot)$ such that
$$\lim_{\epsilon\to0}\lim_{N\to\infty}\E\ol{\bf w}_{\idxsetx } = \w(t,x),$$ that is,~\eqref{E} holds.
\end{itemize}
\end{assump}
%\beq\label{exact-equality}\mathrm{Law}\big(\tilde z_i, \tilde z_{i+1}\big)= \gibbs K {\lambda_i}\otimes\gibbs K {\lambda_{i+1}} \text{ exactly, where }\lambda_i = \lambda_D(\E\tilde z_i).\eeq
To be clear, Assumption 2.1 does not actually hold, but it will simplify the analysis significantly. Handling errors incurred by the fact that $\mathrm{Law}\big( w_i\big)\neq \mu_K[\omega_i, \lambda_i]$ for finite $N$ would not add any insight. It would only obscure our understanding of the answers to the questions outlined at the beginning of this section. 

 \subsection{Implications of the Smoothness of $\omega_i$}\label{subsec:omega-tilde-w}  Note that we can write
\beq\label{uD-lambda}u_D(\lambda) = u_C(\lambda) + u_o(\lambda) = \lambda + u_o(\lambda),\eeq where $u_o$ is bounded and has period 1 (``o'' for ``oscillating''), and is odd about $\lambda=1/2$. Using~\eqref{uD-lambda} and the fact that $\E[ w_i] = u_D(\lambda_i) - u_D(\lambda_i-\omega_i)$, we have 
\beq\label{EW-delam}\E w_i = \omega_i + u_o(\lambda_i) - u_o(\lambda_{i}-\omega_i).\eeq 
From here, we have the following result:
\begin{lemma}\label{dlam-converge} Let $\omega_i$, $w(t,x)$ be as in Assumption 2. The following limits hold as $N\to\infty$ and then $\epsilon\to0$:
\beq\label{dlam-barw}\max_{\argidxsetx i}\l|\omega_i-\E \ol{\bf w}_{\idxsetx }\r|\to0,\quad \max_{\argidxsetx i}|\omega_i-\w(t,x)|\to0\eeq  %Using~\eqref{E}, we then also have \beq\label{dlam-omega}
\end{lemma}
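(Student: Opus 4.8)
The plan is to reduce the lemma to two elementary facts: the oscillatory part of $\E w_i$ telescopes across a contiguous mesoscopic window, and the slowly varying part $\omega_i$ is nearly constant on each window. The key observation is that in~\eqref{EW-delam} we have $\lambda_i-\omega_i=\lambda_{i-1}$, so that
$$\E w_i=\omega_i+u_o(\lambda_i)-u_o(\lambda_{i-1}).$$
Averaging over the window $\argidxsetx i$ (a contiguous block of integers, say $\{i_-,\dots,i_+\}$) and using linearity of expectation,
$$\E\ol{\bf w}_{\idxsetx}=\frac{1}{2N\epsilon}\sum_{\argidxsetx i}\omega_i+\frac{1}{2N\epsilon}\bigl(u_o(\lambda_{i_+})-u_o(\lambda_{i_--1})\bigr),$$
where the second term is the collapsed telescoping sum of the discrete gradients $u_o(\lambda_i)-u_o(\lambda_{i-1})$. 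This is the crux: since $u_o$ is bounded, this boundary remainder is at most $\|u_o\|_\infty/(N\epsilon)$ in absolute value, so the entire rough, oscillatory contribution to $\E w_i$ is washed out by mesoscopic averaging and vanishes as $N\to\infty$ for each fixed $\epsilon$.

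Next I would control the two surviving pieces. Let $\bar\omega$ denote the genuine average of the $\omega_i$ over the window and let $a_{N,\epsilon}=\max_{\argidxsetx k}N|\omega_{k+1}-\omega_k|$. For any two sites in the window, summing increments gives $|\omega_i-\omega_j|\le 2N\epsilon\cdot a_{N,\epsilon}/N=2\epsilon\,a_{N,\epsilon}$, hence the oscillation satisfies $\max_{i,j}|\omega_i-\omega_j|\le 2\epsilon\,a_{N,\epsilon}$; taking $\limsup_{N\to\infty}$ and then $\limsup_{\epsilon\to0}$ and invoking Assumption 2.2 (which bounds the iterated $\limsup$ of $a_{N,\epsilon}$ by $C$) shows $\max_{\argidxsetx i}|\omega_i-\bar\omega|\to0$ in the double limit. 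Meanwhile, the telescoping display rewrites as $\E\ol{\bf w}_{\idxsetx}=\tfrac{\#}{2N\epsilon}\bar\omega+O(1/(N\epsilon))$, where $\#/(2N\epsilon)\to1$; since $\E\ol{\bf w}_{\idxsetx}\to\w(t,x)$ by Assumption 2.3, this forces $\bar\omega\to\w(t,x)$ (and in particular $\bar\omega$ stays bounded, which is what makes the $O(1/(N\epsilon))$ normalization slack negligible).

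Finally I would assemble the two claimed limits through $\w(t,x)$ as a hub. The triangle inequality $|\omega_i-\w(t,x)|\le|\omega_i-\bar\omega|+|\bar\omega-\w(t,x)|$, maximized over the window, gives the second limit $\max_{\argidxsetx i}|\omega_i-\w(t,x)|\to0$; combining this with Assumption 2.3 in $|\omega_i-\E\ol{\bf w}_{\idxsetx}|\le|\omega_i-\w(t,x)|+|\w(t,x)-\E\ol{\bf w}_{\idxsetx}|$ yields the first. I expect no genuine obstacle beyond spotting the telescoping structure; the only real care needed is bookkeeping with the nested $\limsup$'s so that the $O(\epsilon)$ oscillation of $\omega$ and the $O(1/(N\epsilon))$ telescoping and normalization remainders are all dispatched in the correct order, namely $N\to\infty$ first and $\epsilon\to0$ second.
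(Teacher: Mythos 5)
Your proof is correct and follows essentially the same route as the paper: average~\eqref{EW-delam} over the window so the $u_o$ terms telescope to an $O(1/N\epsilon)$ boundary remainder, use Assumption 2.2 to show the $\omega_i$ deviate from their window mean by $O(\epsilon)$ in the iterated limit, and close with Assumption 2.3. The only (immaterial) difference is ordering: the paper deduces the first limit in~\eqref{dlam-barw} directly and then the second from Assumption 2.3, whereas you route both through $\w(t,x)$ via the triangle inequality.
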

\begin{remark} The second limit in~\eqref{dlam-barw} implies \eqref{Ep} for the $\omega_i$, i.e. we have $\omega_{Nx+R}\to\w(t,x)$ for any $R$ fixed.
\end{remark}
\begin{proof}We take the window average over $\argidxsetx i$ on both sides of~\eqref{EW-delam}, then take advantage of telescoping and the boundedness of $u_o$ on $\R$. This gives
\beqs\label{dlam-II}\l|\E\ol {\bf w}_\idxsetx-\bardlamloc\r|\leq  2\|u_o\|_\infty/(2N\epsilon+1).\eeqs
Moreover, Assumption 2.2 gives
\beq\label{dlam-I}\max_{\argidxsetx j}|\omega_j - \bardlamloc|\leq \l(2\epsilon N+1\r)\max_{i\in\idxsetx}|\omega_{i+1}-\omega_i| \leq  \l(2\epsilon+\frac 1N\r)C.\eeq Combining~\eqref{dlam-I} and~\eqref{dlam-II} gives the first limit in the lemma. The second limit follows directly from the first and Assumption 2.3. 
%\\ 
\end{proof}
\begin{corollary}\label{corr-omega-Ew}
The $\lambda_i\bmod 1$ are roughly varying near any $x$ such that $\omega(t,x)\notin\Z$, and $\E[ w_i]$ is roughly varying near any $x$ such that $\omega(t,x)\bmod 1\neq 0,1/2$. 
\end{corollary}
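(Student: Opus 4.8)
Corollary \ref{corr-omega-Ew} makes two separate roughness claims. I will prove each by combining the smoothness of the $\omega_i$ (Assumption 2.2, equivalently Property 1.2, together with Lemma \ref{dlam-converge}) with the \emph{roughness} of $\lambda_i$ itself. The key conceptual point is that although $\omega_i = \lambda_i - \lambda_{i-1}$ varies smoothly, this only controls the \emph{increments} of $\lambda_i$; the $\lambda_i$ themselves can (and will) accumulate large jumps modulo $1$, because $\omega_i$ converges to a generically \emph{irrational} limit $\omega(t,x)$.

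**Step 1: roughness of $\lambda_i \bmod 1$.** Let me fix $x$ with $\omega(t,x)\notin\Z$. By the second limit in Lemma \ref{dlam-converge}, $\omega_i \to \omega(t,x)$ for $i$ near $Nx$ as $N\to\infty$; in particular, for $N$ large and $i$ in a fixed finite window around $Nx$, the increment $\lambda_{i+1}-\lambda_i = \omega_{i+1}$ is within $\delta$ of $\omega(t,x)$ for any prescribed $\delta$. Since $\omega(t,x)\notin\Z$, the quantity $\lambda_{i+1}-\lambda_i \bmod 1$ converges to $\omega(t,x)\bmod 1 \in (0,1)$, a nonzero constant. Hence $|\lambda_{i+1}\bmod 1 - \lambda_i\bmod 1|$ does \emph{not} go to $0$ (up to the $\bmod 1$ wraparound, the gap is either $\omega(t,x)\bmod 1$ or $1-(\omega(t,x)\bmod 1)$, both bounded away from $0$). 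By Definition \ref{smth-rgh}, this is exactly the statement that $\lambda_i\bmod 1$ is roughly varying near $x$. I would write this out carefully, choosing $R\geq 1$ fixed and exhibiting consecutive indices $i,i+1$ in $|i-Nx|\le R$ whose $\lambda\bmod 1$ values stay a fixed distance apart as $N\to\infty$.

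**Step 2: roughness of $\E[w_i]$.** Here I use the representation \eqref{EW-delam}, namely $\E w_i = \omega_i + u_o(\lambda_i) - u_o(\lambda_i-\omega_i)$. The term $\omega_i$ varies smoothly (Property 1.2), so all roughness in $\E w_i$ must come from the oscillating piece $g(\lambda_i) := u_o(\lambda_i) - u_o(\lambda_i - \omega_i)$, where $u_o$ has period $1$. Since $\omega_i \to \omega(t,x)$ and $\omega_{i+1}-\omega_i \to 0$ in the window, the difference $g(\lambda_{i+1}) - g(\lambda_i)$ is, to leading order, controlled by how much $\lambda\bmod 1$ moves between consecutive sites — which by Step 1 is a nonzero constant $\approx \omega(t,x)\bmod 1$. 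So I would show that $\E w_{i+1} - \E w_i$ stays bounded away from $0$ by arguing that $g$ is a nonconstant function of $\lambda\bmod 1$ and that its argument jumps by a fixed nonzero amount at each step. The role of the hypothesis $\omega(t,x)\bmod 1 \neq 0, 1/2$ is precisely to guarantee that $g$ actually \emph{changes}: since $u_o$ is odd about $\lambda = 1/2$ and $1$-periodic, the combination $u_o(\lambda) - u_o(\lambda-\omega)$ can be constant in $\lambda$ (or identically smooth-looking) exactly in the degenerate cases $\omega \equiv 0$ or $\omega \equiv 1/2 \pmod 1$; away from these, $g$ genuinely oscillates and the jumps in $\lambda\bmod 1$ produce order-one jumps in $\E w_i$.

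**The main obstacle.** The delicate step is Step 2, specifically quantifying that $g(\lambda) = u_o(\lambda) - u_o(\lambda-\omega)$ has jumps bounded below when $\lambda\bmod1$ advances by a fixed irrational amount and $\omega\bmod 1 \neq 0,1/2$. I would need a lower bound of the form $\sup_{a}|g(a+\omega\bmod 1) - g(a)| > 0$, ruling out the possibility that the two shifted copies of $u_o$ conspire to cancel their increments. This requires using concrete structural facts about $u_o$ (its oddness about $1/2$, its nonconstancy, and that it is the mean-deviation of the discrete Gibbs family $\rho_K[\cdot]$), and the excluded values $0,1/2$ are exactly the symmetry points where cancellation occurs. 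I would isolate this as a short lemma about the periodic function $u_o$, proving nondegeneracy of $g$ for $\omega\bmod 1 \notin\{0,1/2\}$, and then feed it into the roughness argument via Definition \ref{smth-rgh}.
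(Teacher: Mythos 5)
Your Step 1 is correct and is essentially the paper's own argument. The genuine gap is in Step 2, in the nondegeneracy lemma you propose. With $g(\lambda)=u_o(\lambda)-u_o(\lambda-\omega)$, the increment you must bound below at consecutive sites is
\[
g(\lambda_i+\omega)-g(\lambda_i)=u_o(\lambda_i+\omega)-2u_o(\lambda_i)+u_o(\lambda_i-\omega)=:f_\omega(\lambda_i),
\]
which is exactly the second difference on which the paper's proof of Corollary~\ref{corr-omega-Ew} rests. Your proposed lemma, $\sup_a|g(a+\omega)-g(a)|>0$ (i.e.\ $f_\omega\not\equiv0$), is too weak, because the points at which $f_\omega$ gets evaluated are the phases $\lambda_i\bmod 1$, which are dictated by the process rather than chosen by you, and $f_\omega$ vanishes at $c=0$ and $c=1/2$ for \emph{every} $\omega$: periodicity and oddness of $u_o$ about $1/2$ give $u_o(0)=u_o(1/2)=0$ and $u_o(-\omega)=-u_o(\omega)$, hence $f_\omega(0)=f_\omega(1/2)=0$. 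So if $\lambda_{\lfloor Nx\rfloor}\bmod 1$ happens to converge to $0$ or $1/2$, your bound yields nothing. What is needed instead is (i) the stronger structural fact, which the paper invokes, that $c=0,1/2$ are the \emph{only} zeros of $f_\omega$ for $\omega\in(0,1)$, and (ii) a case analysis: after extracting a convergent subsequence $\lambda_{\lfloor Nx\rfloor}\bmod 1\to c$ (a compactness step you also omit, since the phases need not converge at all), either $c\notin\{0,1/2\}$ and one concludes $f_\omega(c)\neq0$ directly, or $c\in\{0,1/2\}$ and one passes to the adjacent site, whose phase converges to $c'=(c+\omega)\bmod 1\notin\{0,1/2\}$, whence $f_\omega(c')\neq0$.

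Step (ii) is also where the hypothesis $\w(t,x)\bmod 1\neq0,1/2$ actually enters, and your explanation of it is incorrect. You claim $g$ is constant (degenerate) precisely when $\omega\equiv0$ or $\omega\equiv1/2\pmod 1$. For $\omega\equiv1/2$ this is false: taking $u_o(\lambda)$ proportional to $\sin(2\pi\lambda)$ (one-periodic, odd about $1/2$) gives $g(\lambda)=u_o(\lambda)-u_o(\lambda-1/2)=2u_o(\lambda)$, which is nonconstant, and $f_{1/2}\not\equiv0$. The actual reason $1/2$ must be excluded is that the zero set $\{0,1/2\}$ of $f_\omega$ is invariant under translation by $1/2$: if $\omega\bmod 1=1/2$ and $\lambda_{\lfloor Nx\rfloor}\bmod 1$ sits near $0$, then all phases in the window alternate between neighborhoods of $0$ and $1/2$, every increment $f_{1/2}(\lambda_i)$ vanishes asymptotically, and roughness of $\E w_i$ can genuinely fail even though $g$ oscillates. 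Excluding $\omega\bmod 1\in\{0,1/2\}$ is exactly what guarantees that the translated set $\{\omega,1/2+\omega\}$ is disjoint from $\{0,1/2\}$, so at least one of any two consecutive phases avoids the zero set. In short, the missing ingredient is not ``$f_\omega$ is not identically zero'' but ``the zeros of $f_\omega$ are exactly $\{0,1/2\}$'' together with the shift-off-the-zero-set argument; without these, your Step 2 fails whenever the phases land on the zero set.
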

\begin{proof}
To see that $\lambda_i\bmod1$ are roughly varying, note that using the second limit in~\eqref{dlam-barw} we have
$$\max_{|i-Nx|\leq R}|(\lambda_{i}-\lambda_{i-1})\bmod 1| = \max_{|i-Nx|\leq R}|\omega_i\bmod 1| \to \omega(t,x)\bmod 1\neq 0.$$ The proof that $\E[ w_i]$ varies roughly near any $x$ such that $\w:=\w(t,x)\bmod 1\neq 0,1/2$, relies on the following fact: for any $v\in (0,1)$, the function $c\mapsto f_v(c)=u_o(c+v)-2u_o(c) + u_o(c-v)$ takes the value zero only when $c=0$ and $c=1/2$. Now, using~\eqref{EW-delam}, we have
\beqs\label{w-u-omega}
\max_{|i-Nx|\leq R}\l|\E w_{i+1} - \E w_i\r| \geq & \max_{|i-Nx|\leq R}|u_o(\lambda_{i}+\omega_{i+1}) -2u_o(\lambda_i) + u_o(\lambda_{i}-\omega_{i})|\\& - \max_{|i-Nx|\leq R}|\omega_{i+1} - \omega_{i}|\eeqs The term on the second line goes to zero, so it remains to show the term on the first line on the righthand side does not go to zero. Consider two cases. First, assume that $\lambda_{\lfloor Nx\rfloor}\bmod 1$ either does not converge at all, or converges to $c\neq0,1/2$. In either case, we can use the compactness of $[0,1]$ to extract a subsequence converging to $c\neq 0,1/2$. Without loss of generality, assume the entire sequence $\lambda_{\lfloor Nx\rfloor}\bmod 1$ converges to $c$. Using this and the convergence of $\omega_i$ to $\w$ for $i=\lfloor Nx\rfloor$ and $i=\lfloor Nx\rfloor +1$, we get 
\beqs
\lim_{N\to\infty}|u_o&(\lambda_{\lfloor Nx\rfloor}+\omega_{\lfloor Nx\rfloor +1}) -2u_o(\lambda_{\lfloor Nx\rfloor}) + u_o(\lambda_{\lfloor Nx\rfloor}-\omega_{\lfloor Nx\rfloor})| \\&= |u_o(c+\w)-2u_o(c) + u_o(c-\w)|=f_\w(c).\eeqs Since $w\neq 0$ and $c\neq 0,1/2$, we can use the fact about the zeros of $f_v(c)$ for $v=\w$, to conclude that $f_\w(c)\neq 0$. Hence the term on the right in the first line of~\eqref{w-u-omega} does not converge to zero, so $\max_{|i-Nx|\leq R}\l|\E w_{i+1} - \E w_i\r|$ cannot converge to zero. 
In the second case, we assume $\lambda_{\lfloor Nx\rfloor}\bmod 1$ converges to $c=0$ or $c=1/2$. It then follows that $c'=c+\w\bmod 1$ is the limit of $\lambda_{\lfloor Nx\rfloor + 1}\bmod 1.$ Since $c=0$ or $c=1/2$ and $\w$ is neither 0 nor 1/2, it follows that $c'\bmod 1\neq 0,1/2$. Applying the same argument as above but now to the second order finite difference centered at $\lfloor Nx+1\rfloor$ instead of $\lfloor Nx\rfloor$, we have
\beqs
\lim_{N\to\infty}
|u_o&(\lambda_{\lfloor Nx\rfloor+1}+\omega_{\lfloor Nx\rfloor +2}) -2u_o(\lambda_{\lfloor Nx\rfloor+1}) + u_o(\lambda_{\lfloor Nx\rfloor+1}-\omega_{\lfloor Nx\rfloor+1})| \\&= |u_o(c'+\w)-2u_o(c') + u_o(c'-\w)|=f_\w(c')\neq 0.\eeqs
\end{proof}
%|u_o&(\lambda_{\lfloor Nx\rfloor +2}) -2u_o(\lambda_{\lfloor Nx\rfloor+1}) + u_o(\lambda_{\lfloor Nx\rfloor})|
\noin Let us now explain the three observations made at the end of Section~\ref{subsec:qual}. \\

\noin\textbf{1) The rate expectations $\bf\E [ r( w_i)]$ vary smoothly.} Recall that $\mu_K[\omega,\lambda](r) = e^{-2K\omega}$. Thus, $\E[r( w_i)]  = e^{-2K\omega_i}$, which is smoothly varying since it is a continuous function of the smoothly varying $\omega_i$. This is a rare property. Indeed, for most other observables $f$, the profile ($\E f( w_i))_{i=1}^N$ is likely to be roughly varying since the expectations are functions of the roughly varying $\lambda_i\bmod1$. 
\begin{remark}\label{remark:tautology}The fact that the $\E[r( w_i)]$ profile varies smoothly as a result of $\omega_i$ varying smoothly is a bit of a tautology. This is because our numerical verification of Property 2 regarding the smoothness of $\omega_i$ used the rate expectation formula~\eqref{r-exp}. In other words, in the righthand panel of Figure~\ref{fig:prop}, we computed $\omega_i$ as $\omega_i=\log(\E[r( w_i)])/(-2K)$. Explaining the smoothness of $\omega_i$ is an open question. \end{remark}

\noin\textbf{2) The expectations $\bf\E w_i$ vary roughly.} This was shown in the corollary using the fact that $\omega_i = \lambda_i-\lambda_{i-1}$ converge to $\w(t,x)$ in the sense of \eqref{Ep}. We note that if instead $\E w_i$ converged to $\w(t,x)$ in the sense of \eqref{Ep} (and were therefore smoothly varying), then the $\omega_i$ would vary roughly near any $x$ for which $\w(t,x)\bmod1\neq0,0.5$. Indeed, let $u_i = u_D(\lambda_i)$, so that $\lambda_i = \lambda_D(u_i)$. Write $\lambda_D(u) = u+\lambda_o(u)$, where $\lambda_o$ has the same properties as $u_o$. In particular, $\lambda_o$ is periodic, smooth, and hence bounded. We then have
\beq\label{omeg-thru-w}\omega_i = \lambda_i - \lambda_{i-1} = \E[ w_i] + \lambda_o(u_i) - \lambda_o(u_{i}-\E[ w_i]),\eeq since $\E[ w_i] = u_i - u_{i-1}$. From here, we obtain an estimate exactly analogous to~\eqref{w-u-omega}. The proof of the roughness of $\omega_i$ follows by swapping $\E[ w_i]$ and $\omega_i$ in the proof of Corollary~\ref{corr-omega-Ew}. In the rough scaling PDE derivation in~\cite{mw-krug}, the assumption $\Law(\zN)= \otimes_i\rho_K[\lambda_i]$ was used, so that $\lambda_i = \lambda_D(\E z_i)$.  It was also implicitly assumed that $\E z_i = Nh_x(t,i/N)+ o(1)$, where $h$ is the rough scaling limit of $\hN$. This implies $\E w_i = h_{xx}(t,i/N) + o(1)$, and hence $\E w_i$ is smoothly varying, implying $\omega_i = \lambda_i -\lambda_{i-1}$ is roughly varying. But in fact, we know the opposite to be true.\\

\noin\textbf{3) The $\bf(i/N,\E w_i)$ cloud narrows to a smooth curve near integer values of the range.} There is a straightforward explanation for this phenomenon. Let $\w=\w(t,x)$ be an integer, and recall that $\omega_{Nx+j}\approx \w$ for finite $j$ and $N$ large. We then have $\lambda_{Nx+j}\approx \lambda_{Nx} + j\omega \equiv \lambda_{Nx}\mod 1$. Hence the $u_o(\lambda_{Nx+j})-u_o(\lambda_{Nx+j-1})$ contribution to $\E w_{Nx+j}$ is smoothly varying for $j$ small (and the $\omega_i$ contribution is always smoothly varying). We state this observation more formally in the following proposition, and give a proof in Appendix~\ref{app:proofs}.
\begin{proposition}
If $\w(t,x)\in\Z$, then $\l(\E w_j\r)_{\argidxsetx j}$ is smoothly varying in a \emph{mesoscopic} neighborhood of $x$, meaning $$\ol{\lim_{\epsilon\to0}}\;\ol{\lim_{N\to\infty}}\max_{\argidxsetx j}|\E w_j -\E w_{j-1}| = 0.$$  
\end{proposition}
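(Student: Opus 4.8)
The plan is to exploit the decomposition~\eqref{EW-delam}, namely $\E w_j = \omega_j + u_o(\lambda_j) - u_o(\lambda_j - \omega_j)$, and to show that each of the three contributions to the finite difference $\E w_j - \E w_{j-1}$ vanishes in the mesoscopic limit when $w(t,x)\in\Z$. The $\omega_j$ contribution is already controlled: by Assumption 2.2, $\max_{\argidxsetx j}N|\omega_{j+1}-\omega_j|<C$, so $\max_{\argidxsetx j}|\omega_{j+1}-\omega_j|\to0$ in the mesoscopic double limit (it is bounded by $C/N$ pointwise). The real content is in the two $u_o$ terms, and this is where the integrality hypothesis $w(t,x)\in\Z$ enters.

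First I would record the consequence of integrality for the $\lambda_j$. By Lemma~\ref{dlam-converge}, $\max_{\argidxsetx i}|\omega_i - w(t,x)|\to0$ as $N\to\infty$ then $\epsilon\to0$. Since $\lambda_j - \lambda_{j-1}=\omega_j$ and $\omega_j$ is approximately the integer $w(t,x)$, the differences $\lambda_j - \lambda_{j-1}$ are approximately integer-valued, so that $\lambda_j \bmod 1$ is approximately constant across the mesoscopic window. More precisely, I would show that $\max_{\argidxsetx j}|(\lambda_j-\lambda_{j_0})\bmod 1|\to0$ for a reference index $j_0$ in the window, by telescoping $\lambda_j - \lambda_{j_0}=\sum(\omega_k)$ and using that each $\omega_k$ is within $o(1)$ of an integer (uniformly over the window). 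This is the key step and the one that requires care: telescoping a window of size $2N\epsilon$ could in principle accumulate the $o(1)$ deviations of each $\omega_k$ from $w(t,x)$ into an $O(N\epsilon)$ error on $\lambda_j$. To control this I would use Assumption 2.2 directly rather than Lemma~\ref{dlam-converge}: writing $\omega_k - \omega_{j_0}=\sum_{l}(\omega_{l+1}-\omega_l)$ gives $|\omega_k - \omega_{j_0}|\le 2N\epsilon\cdot \max|\omega_{l+1}-\omega_l|\le (2\epsilon + 1/N)C$, and a second telescoping of $\lambda_j - \lambda_{j_0}-(j-j_0)\omega_{j_0}$ over the window then yields an $O((N\epsilon)^2/N)=O(N\epsilon^2)$ bound on the deviation of $\lambda_j$ from the arithmetic progression $\lambda_{j_0}+(j-j_0)\omega_{j_0}$. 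Since $\omega_{j_0}\equiv 0 \pmod 1$ up to $o(1)$ when $w(t,x)\in\Z$, this shows $\lambda_j\bmod 1$ is nearly constant, up to an error I must verify also vanishes; the delicate accounting of the order of the $N\to\infty$ and $\epsilon\to0$ limits here is the main obstacle.

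Once the near-constancy of $\lambda_j\bmod 1$ is established, the rest follows by the smoothness (Lipschitz continuity) of the one-periodic function $u_o$. I would bound
\begin{align*}
|\E w_j - \E w_{j-1}| &\le |\omega_j - \omega_{j-1}| + |u_o(\lambda_j)-u_o(\lambda_{j-1})|\\
&\quad + |u_o(\lambda_j-\omega_j)-u_o(\lambda_{j-1}-\omega_{j-1})|.
\end{align*}
The first term is $O(1/N)$ by Assumption 2.2. For the second term, since $u_o$ is one-periodic and $C^\infty$, $|u_o(\lambda_j)-u_o(\lambda_{j-1})|\le \|u_o'\|_\infty\,|(\lambda_j-\lambda_{j-1})\bmod 1|=\|u_o'\|_\infty\,|\omega_j\bmod 1|$, which tends to $w(t,x)\bmod 1=0$ by integrality. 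The third term is handled identically, noting $(\lambda_j-\omega_j)-(\lambda_{j-1}-\omega_{j-1})=\omega_j-\omega_{j-1}$ again has difference $O(1/N)$ modulo the near-integer $\omega$'s. Taking the maximum over $\argidxsetx j$ and then the iterated limits gives the claim. I expect the bulk of the write-up to be the uniform telescoping estimate controlling $\lambda_j \bmod 1$ across a mesoscopic window, since everything downstream is a routine application of periodicity and the mean-value theorem for $u_o$.
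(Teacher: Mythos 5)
Your final paragraph already contains the entire proof, and it is essentially the paper's argument: the paper writes $\lambda_j = \lambda_0 + j\w + \epsilon_j$ with $\epsilon_j = \sum_{\ell=1}^{j}(\omega_\ell-\w)$, so that
$$|u_o(\lambda_j)-u_o(\lambda_{j-1})| = |u_o(\lambda_0+\epsilon_j)-u_o(\lambda_0+\epsilon_{j-1})| \leq \|u_o'\|_\infty\,|\omega_j-\w|,$$
which is exactly your periodicity-plus-mean-value bound $\|u_o'\|_\infty\,\mathrm{dist}(\omega_j,\Z)$; combined with Assumption 2.2 for the $\omega$-increments and the uniform convergence $\max_{\argidxsetx i}|\omega_i-\w|\to0$ of Lemma~\ref{dlam-converge}, this yields the claim. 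Two small corrections to that paragraph: first, $(\lambda_j-\omega_j)-(\lambda_{j-1}-\omega_{j-1}) = \lambda_{j-1}-\lambda_{j-2}=\omega_{j-1}$, not $\omega_j-\omega_{j-1}$; the third term is still fine because $\omega_{j-1}$ is within $o(1)$ of the integer $\w$, by the same mechanism as the second term. Second, ``$\omega_j\bmod 1$'' must be read as distance to the nearest integer (the representative in $[0,1)$ can be near $1$ when $\omega_j$ approaches $\w$ from below).

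The genuine problem is the step you yourself flag as the main obstacle: near-constancy of $\lambda_j\bmod 1$ across the whole mesoscopic window. It is both unobtainable and unnecessary. Your own estimate shows it fails: double telescoping with Assumption 2.2 gives a deviation of order $N\epsilon^2$ from the arithmetic progression $\lambda_{j_0}+(j-j_0)\omega_{j_0}$, and $N\epsilon^2\to\infty$ as $N\to\infty$ at fixed $\epsilon$, so the iterated limit order ($N$ first, then $\epsilon$) cannot rescue it; similarly, $\epsilon_j$ can drift by up to $2N\epsilon\cdot\max_{\argidxsetx i}|\omega_i-\w|$, which has no reason to vanish. (Consistent with this, in the proof of Lemma~\ref{lma:erdos} the paper controls such drift only over sub-blocks of length $N^{1/3}\epsilon$, where the analogous error $N^{2/3}\epsilon^2\|\delta(\bm{\omega})\|_\infty$ does vanish.) The resolution, which the paper's proof uses and your last paragraph independently rediscovers, is that the proposition only ever requires \emph{one-step} differences: $\epsilon_j-\epsilon_{j-1}=\omega_j-\w\to0$ uniformly over $\argidxsetx j$, so each consecutive difference of $u_o(\lambda_j)$ is small even though $\lambda_j\bmod 1$ may wander substantially across the window. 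Delete the middle paragraph, fix the slip noted above, and your proof coincides with the paper's.
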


\subsection{Why~\eqref{Ef} holds: mesoscopic averaging of marginals}\label{subsec:marg-av} In this section, we will give a rigorous explanation for why the limit~\eqref{Ef} holds, which we repeat here for convenience.
\beqn\label{hatf-exist}\text{For all ``suitable" }f\text{ there exists }\hat f\text{ such that }\E \bar{f}( {\bf w}_\loc)\stackrel{N,\epsilon}{\approx} \hat f(\E\ol{\bf w}_\loc).\tag{Ef}\eeqn Let us give an informal overview first. Recall that we are assuming $\Law(w_i)= \mu_K[\omega_i, \lambda_i\bmod 1]$ exactly, and from Lemma~\ref{dlam-converge} we know that $\omega_i$ nears $\E\ol{\bf w}_\loc$ uniformly over $i\in\loc$. As such, we can write $$\Law(w_i)\approx\mu_K[\E\ol{\bf w}_\loc, \lambda_i\bmod 1].$$ We then have
\beqs\label{Ef-intro}
\E \bar{f}( {\bf w}_\loc)&=\frac{1}{2N\epsilon}\sum_{i\in\loc}\E f(w_i) \\
&\approx \frac{1}{2N\epsilon}\sum_{i\in\loc}\mu_K[\E\ol{\bf w}_\loc, \lambda_i\bmod 1](f)
\eeqs
We need to write the expression on the second line as a function of $\E\ol{\bf w}_\loc$ only, but it would seem that the $\lambda_i\bmod 1$ prevent us from doing so. However, it turns out that the empirical measure formed by the points $\lambda_i\bmod 1$, $i\in\loc$, converges to the uniform distribution on $\unit$ as $\Nepslim$! \textit{As a result, the $\lambda_i\bmod1$ ``integrate out", giving us our desired function $\hat f$}:
\beqs\label{Ef-intro-II}
\E \bar{f}( {\bf w}_\loc)&\approx \int_0^1\mu_K[\E\ol{\bf w}_\loc, \lambda](f)d\lambda \\
&= \muinfty[\E\ol{\bf w}_\loc](f)=\hat f(\E\ol{\bf w}_\loc).
\eeqs
Here, we have defined $\muinfty[\omega]$ as the measure given by the integral of $\mu_K[\omega,\lambda]$ over $\lambda\in\unit$, and $\hat f(\omega):=\muinfty[\omega](f)$. %Thus, \textit{we have shown that~\eqref{Ef} holds with the function $\hat f$ defined by} $\hat f(\omega) = \muinfty[\omega](f)$! 
An alternative way to state this argument is to work directly with measures. Note that we can write the righthand side of~\eqref{Ef-intro} as the expectation of $f$ with respect to the averaged measure $\sum_{i\in\loc}\Law(w_i)/2N\epsilon$. Therefore,~\eqref{Ef-intro-II} is true because
\beq\label{rough-fundamental}
\frac1{2N\epsilon}\sum_{i\in\loc}\Law(w_i)\approx \muinfty[\E\ol{\bf w}_\loc].\eeq By linearity of expectation, the mean of the measure on the left in~\eqref{rough-fundamental} is $\E\ol{\bf w}_\loc$. Let us check that $\muinfty[\E\ol{\bf w}_\loc]$ also has mean $\E\ol{\bf w}_\loc$, or in other words, that $\muinfty[\cdot]$ is mean-parameterized. (In principle, the two measures in~\eqref{rough-fundamental} could be close but have different means for $N<\infty$ and $\epsilon>0$.) To show that $m_1(\muinfty[\omega]) = \omega$, we will use the observation that
$$u_C(\omega) = \omega=\int_{\omega-1/2}^{\omega+1/2}u_D(\lambda)d\lambda,$$ a consequence of $u_o$ being odd about 1/2 and periodic. This equation says that we can recover the mean of a continuous normal distribution $\mathcal N(\omega, \sigma^2)$ by integrating the mean of the discretely supported distributions $\mathcal N_D(\lambda, \sigma^2)$ over $\lambda\in [\omega-1/2,\omega+1/2]$. Using this result, we have
 \beqs \label{mu-infty-mean}
m_1(\muinfty[\omega]) = \int_{0}^1m_1(\mu_K[\omega, \lambda])d\lambda&=\int_0^1\l[u_D(\lambda) - u_D(\lambda-\omega)\r]d\lambda\\
&= u_C(1/2) - u_C(1/2-\omega)= \omega.
\eeqs
Therefore, $\muinfty[\cdot]$ is indeed mean-parameterized. Recall the discussion in the beginning of Section~\ref{smooth-LE-def}. We considered the prototypical smooth LE state $\vN$,  in which $\Law(v_i)=\mu[\E v_i]$ for some mean-parameterized family $\mu[\cdot]$. In this case, $\E f(v_i) = \mu[\E v_i](f) = \hat f(\E v_i)$, where $\hat f(v) = \mu[v](f)$. Comparing to~\eqref{rough-fundamental}, we can now summarize a key difference between prototypical smooth LE states and the rough LE state of $\wN$:\\

For smooth LE states $\vN$, $\Law(v_i)\approx \mu[\E v_i]$ for some mean-parameterized family $\mu[\cdot]$, and therefore \eqref{Efp} holds. In our rough LE state $\wN$, $\Law(w_i)$ are \emph{not} mean-parameterized, but their mesoscopic averages \emph{are}: $(1/2N\epsilon)\sum_{i\in\loc}\Law(w_i)\approx\mu[\E\ol{\bf w}_\loc]$ for a mean-parameterized family $\muinfty[\cdot]$ and therefore,~\eqref{Ef} holds. \\

Before stating our result formally, we confirm numerically that $\hat f(\omega)=\muinfty[\omega](f)$ is the correct function mapping $\E\ol{\bf w}_\loc$ to $\E \bar f({\bf w}_\loc)$ in the $N\to\infty$, $\epsilon\to0$ limit. 
%\beq\label{predicted-hatf}\hat f(\omega) = \muinfty[\omega](f) = \sum_{n\in\Z}f(n)\e\l(-\frac K2(n-\omega)^2\r)\int_{-\frac12}^{\frac12}Q(n,\omega,\lambda)d\lambda.\eeq 
See Figure~\ref{fig:Ef-rough-fhat} for confirmation.
\begin{figure}
\centering
\includegraphics[width=\textwidth]{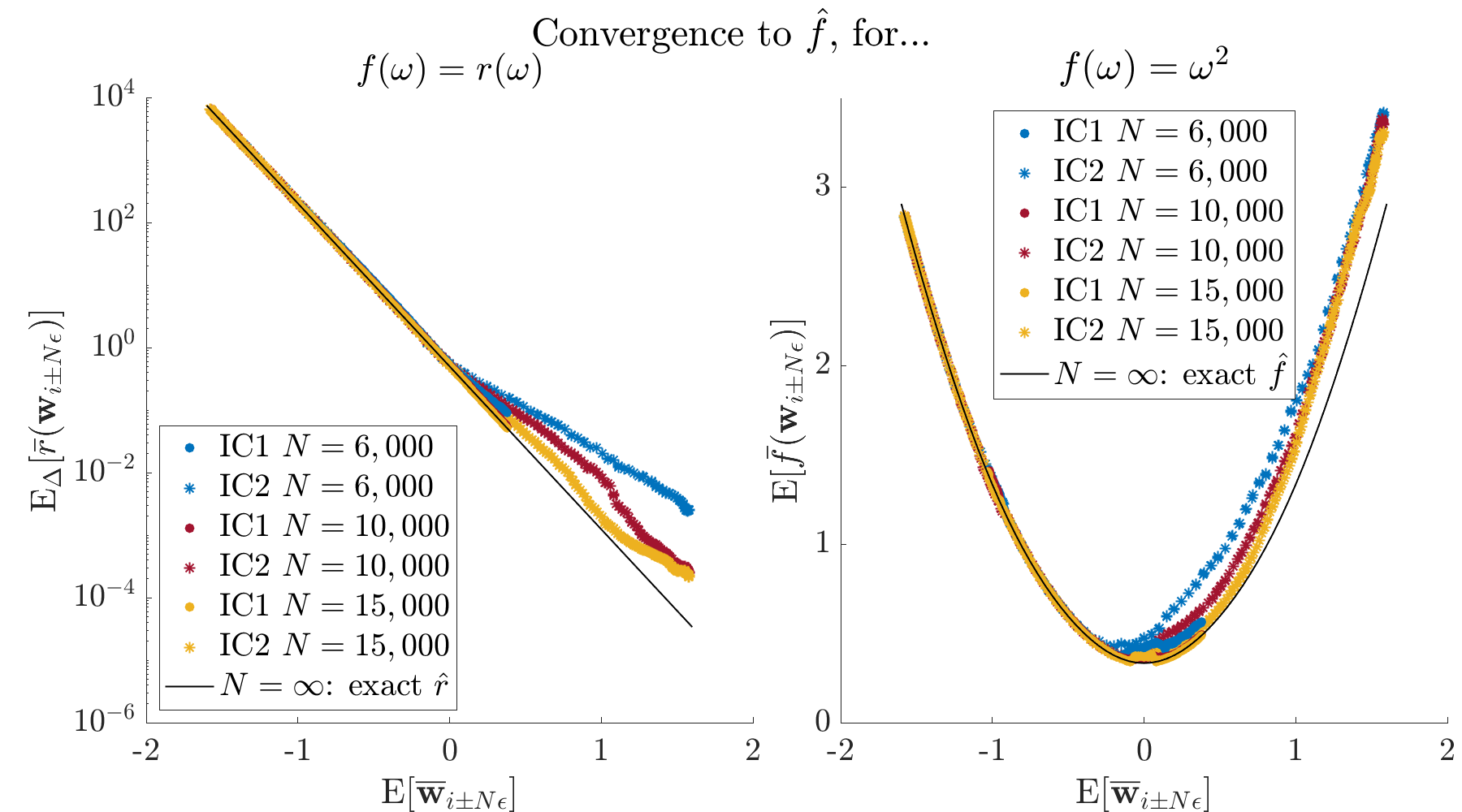}
\caption{These plots are the same as in Figure~\ref{fig:Ef-rough}, except that we now additionally plot the predicted curve $(\omega, \hat f(\omega))$. We see that $(\E\ol{\bf w}_{i\pm N\epsilon},\; \E\bar f({\bf w}_{i\pm N\epsilon}))$  converge to the curve as $N$ increases. The function $\hat f$ is given by $\hat f(\omega)=\muinfty[\omega](f)$, where $\muinfty[\omega]$ is defined in~\eqref{muinfty-def}.}
\vspace{-0.4cm}
\label{fig:Ef-rough-fhat}
\end{figure}

\begin{proposition}\label{prop:aud} Define the pmf $\muinfty[\omega]$ by
\beq\label{muinfty-def}\muinfty[\omega](n) = \int_0^1\mu_K[\omega, \lambda](n)d\lambda = \e\l(-\frac K2(n-\omega)^2\r)\int_0^1Q(n,\omega,\lambda)d\lambda.\eeq If $\w(t,x)$ is irrational, then
\beq\label{aud-pts-prob-bd-foreal}
\bigg|\frac{1}{2N\epsilon}\sum_{i\in\loc}\mu_K[\omega_i,\lambda_i](n) - \muinfty\l[\E\ol{\bf w}_\loc\r](n)\bigg| \leq C_{N,\epsilon}\e\l(-Kn^2/3\r)\eeq for all $n\in\Z$, where $C_{N,\epsilon} \to 0$ as $\Nepslim$. Letting $p(n)\propto\e(-Kn^2/3)$ be a pmf on $\Z$, it follows that~\eqref{Ef} holds for any $f\in L^1(p)$, with $\hat f(\omega) = \muinfty[\omega](f)$.
\end{proposition}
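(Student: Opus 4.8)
The plan is to first establish the pointwise bound~\eqref{aud-pts-prob-bd-foreal} and then deduce~\eqref{Ef} by summing against $f$. Writing $\bar\omega:=\E\ol{\bf w}_\loc$ and $M:=2N\epsilon$, I would split the bracketed quantity $\Delta(n):=\frac1{M}\sum_{i\in\loc}\mu_K[\omega_i,\lambda_i](n)-\muinfty[\bar\omega](n)$ into two pieces: a term (A) that freezes the first argument, $\frac1M\sum_{i\in\loc}\big(\mu_K[\omega_i,\lambda_i](n)-\mu_K[\bar\omega,\lambda_i](n)\big)$, and a term (B) that replaces the empirical average over the $\lambda_i$ by the torus integral, $\frac1M\sum_{i\in\loc}\mu_K[\bar\omega,\lambda_i](n)-\int_0^1\mu_K[\bar\omega,\lambda](n)\,d\lambda$. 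Two structural facts drive the argument: by~\eqref{mu-K} and~\eqref{Q}, $\mu_K[\omega,\lambda](n)$ depends on $\lambda$ only through $\lambda\bmod1$, so the $\lambda_i$ may be viewed as points on the unit torus; and by~\eqref{muinfty-def}, $\muinfty[\bar\omega](n)$ is exactly the torus average of $\mu_K[\bar\omega,\cdot](n)$, i.e.\ its zeroth Fourier mode.

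For (A), I would show that $\mu_K[\omega,\lambda](n)$ is $C^1$ in $\omega$ with $|\partial_\omega\mu_K[\omega,\lambda](n)|\le C_K\,\e(-Kn^2/3)$ uniformly for $\omega$ in a bounded set and $\lambda\in[0,1]$. Differentiating the prefactor $\e(-\tfrac K2(n-\omega)^2)$ in~\eqref{mu-K} produces a factor $K(n-\omega)$, while $Q_K$ and $\partial_\omega Q_K$ are bounded because the functions $\calZ_K,\calZ_{2K}$ are analytic, $1$-periodic, bounded above and bounded away from $0$; the resulting polynomial $(1+|n|)$ is absorbed by lowering the Gaussian exponent from $\tfrac K2$ to $\tfrac K3$. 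Since Lemma~\ref{dlam-converge} gives $\max_{\argidxsetx i}|\omega_i-\bar\omega|\to0$ in the iterated limit, (A) is bounded by $\max_i|\omega_i-\bar\omega|\cdot C_K\,\e(-Kn^2/3)$, of the required form.

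The heart is (B), which is an equidistribution statement. Fixing $\omega=\bar\omega$ and Fourier-expanding the analytic $1$-periodic function $\lambda\mapsto Q(n,\bar\omega,\lambda)=\sum_m\hat Q_m(n)e^{2\pi im\lambda}$, the $m=0$ mode is exactly $\int_0^1Q\,d\lambda$, so $(\text{B})=\e(-\tfrac K2(n-\bar\omega)^2)\sum_{m\neq0}\hat Q_m(n)\,\frac1M\sum_{i\in\loc}e^{2\pi im\lambda_i}$. The key estimate is a Weyl bound obtained by a one-step summation by parts on $\lambda_i-\lambda_{i-1}=\omega_i$: replacing $e^{2\pi im\omega_i}$ by $e^{2\pi im\w(t,x)}$ and telescoping yields $\big|\frac1M\sum_{i\in\loc}e^{2\pi im\lambda_i}\big|\le\frac{2/M+2\pi|m|\,\delta}{|1-e^{2\pi im\w(t,x)}|}$, where $\delta:=\max_{\argidxsetx i}|\omega_i-\w(t,x)|$. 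Here the hypothesis that $\w(t,x)$ is \emph{irrational} is precisely what keeps the denominator positive for every $m\neq0$, and Lemma~\ref{dlam-converge} gives $M\to\infty$ and $\delta\to0$ in the iterated limit, so each mode vanishes. To sum over $m$ uniformly in $n$, I would bound $\sup_n|\hat Q_m(n)|$ by the convolution $(|\widehat{\calZ_{2K}}|\ast|\widehat{B}|)(m)$, with $B(\lambda)=1/(\calZ_K(\lambda-\bar\omega)\calZ_K(\lambda))$; this is $n$-independent (the $n$-dependence of $\hat Q_m(n)$ is only a unimodular phase shift of $\calZ_{2K}$) and decays exponentially by analyticity, hence is summable. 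A dominated-convergence argument for the series over $m$ then sends $\sum_{m\neq0}$ to $0$ in the iterated limit, and after extracting the prefactor $\e(-\tfrac K2(n-\bar\omega)^2)\le C\,\e(-Kn^2/3)$ this gives (B)$\,\le C^{(B)}_{N,\epsilon}\e(-Kn^2/3)$ with $C^{(B)}_{N,\epsilon}\to0$.

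Combining (A) and (B) yields~\eqref{aud-pts-prob-bd-foreal} with $C_{N,\epsilon}\to0$. For the final claim, $\E\bar f({\bf w}_\loc)-\hat f(\bar\omega)=\sum_n f(n)\,\Delta(n)$ (using Assumption~\ref{gibbs-theory}, $\Law(w_i)=\mu_K[\omega_i,\lambda_i]$), so the pointwise bound gives $|\E\bar f({\bf w}_\loc)-\hat f(\bar\omega)|\le C_{N,\epsilon}\sum_n|f(n)|\e(-Kn^2/3)=C_{N,\epsilon}\,Z_p\|f\|_{L^1(p)}\to0$ for any $f\in L^1(p)$, which is~\eqref{Ef} with $\hat f(\omega)=\muinfty[\omega](f)$. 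I expect the main obstacle to be step (B): controlling the Weyl sums for the non-equally-spaced $\lambda_i$ whose increments only \emph{converge} to $\w(t,x)$, and in particular making the mode sum converge uniformly in $n$ despite the small-denominator factors $1/|1-e^{2\pi im\w(t,x)}|$. The point worth emphasizing is that this is resolved not by any Diophantine control of $\w(t,x)$ but by the uniform-in-$n$ exponential decay of the Fourier coefficients of $Q$, which furnishes a summable dominating sequence.
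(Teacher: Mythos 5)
Your proposal is correct, but it takes a genuinely different route through the key equidistribution step, so a comparison is worthwhile. The paper first replaces \emph{both} $\omega_i$ and $\E\ol{\bf w}_\loc$ by $\w(t,x)$ (via the Lipschitz-in-$\omega$ bound~\eqref{app:ineq}, the analogue of your step (A)), and then proves equidistribution of the $\lambda_i\bmod 1$ as a statement about the empirical measure $P^{N,\epsilon}$: it approximates the continuous $Q$ by step functions, reducing the problem to the discrepancy $\sup_A|P^{N,\epsilon}(A)-|A||$, controls that by the Erd\H{o}s--Tur\'an inequality~\eqref{app:erdos-turan}, and bounds the resulting Weyl sums $\hat P^{N,\epsilon}(m)$ by partitioning the window into blocks of length $N^{1/3}\epsilon$, linearizing $\lambda_{i_k+j}\approx\lambda_{i_k}+j\omega_{i_k}$ within each block, and applying the geometric sum formula, with the denominators $|1-\e(2\pi i m\omega_{i_k})|$ kept away from zero by combining irrationality of $\w$ with $\omega_i\to\w$ (the quantity $\delta(n)$ in Lemma~\ref{lma:erdos}). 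Your argument replaces this machinery by a Fourier expansion of $Q$ in $\lambda$ plus a one-step summation by parts: the telescoping bound $|W_m|\le(2/M+2\pi|m|\delta)/|1-\e(2\pi i m\w)|$ is valid (multiply the Weyl sum by $1-\e(2\pi im\w)$, shift the index at a cost of $2$, and use $|\e(2\pi im(\w-\omega_{i+1}))-1|\le2\pi|m|\delta$), and the small-denominator problem at large $m$ is neutralized by the trivial bound $|W_m|\le1$ together with your $n$-uniform summable majorant for $|\hat Q_m(n)|$, which is legitimate since $n$ enters the numerator $\calZ_{2K}(\lambda-\frac12[\omega+n])$ only as a shift, hence only as a phase on its Fourier coefficients. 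Two things your version buys: it avoids Erd\H{o}s--Tur\'an and the block decomposition entirely, and it invokes the increment bound of Assumption 2.2 only through Lemma~\ref{dlam-converge} (i.e.\ only via $\max_{\argidxsetx i}|\omega_i-\w|\to0$), whereas the paper's block argument needs the rate $N|\omega_{i+1}-\omega_i|<C$ a second time, directly, to make the linearization error $B_{N,\epsilon}=N^{2/3}\epsilon^2\max_i|\omega_{i+1}-\omega_i|$ vanish. What the paper's route buys in exchange is a stronger intermediate statement --- a discrepancy bound uniform over all intervals, hence over all bounded test functions --- while yours is tailored to the specific smooth periodic $Q$; both are qualitative in the end. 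Your final deduction of~\eqref{Ef} by summing the pointwise bound against $f\in L^1(p)$ (justified by absolute summability, since $\muinfty[\bar\omega](n)\le C\e(-Kn^2/3)$ for bounded $\bar\omega$) matches what the paper leaves implicit.
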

%Note that if $f$ is such that $\mu_K[\omega,\lambda](f)$ depends on $\omega$ only, then $\hat f$ coincides with the $\hat f$ defined previously, i.e. $\hat f(\omega)=\mu_K[\omega,\lambda](f)$. %The proposition immediately implies that for any observable $f$ integrable with respect to the integer supported pmf $p(n)\propto \e(-Kn^2/3)$, we have
%ndeed, note that $Q$ is bounded uniformly on its domain so $\muinfty[\cdot]$ is majorized e.g. by $p(n)\propto \e(-Kn^2/4)$. The bound~\eqref{aud-pts-prob-bd} then gives that for any $g$ integrable with respect to $p$, 
%Letting $f_{\bar\muinfty}(\omega) = \sum_nf(n)\bar\muinfty[\omega](n)$, note that~\eqref{aud-pts-prob-bd} is sufficient to ensure that for any $f$ integrable with respect to distributions $\rho[\cdot\,;\, K/4]\stackrel{d}{=}\mathcal N_D(\cdot\,,\, 2/K)$, we have
%\beq
%|\barmuloc(f) - \hat f(m_1(\barmuloc))| \leq C_{N,\epsilon}\sum_n|f(n)|\e\l(-Kn^2/3\r)\to0,\quad \Nepslim.
%\eeq
Let us give an outline for the proof of Proposition~\ref{prop:aud}. First, we justify replacing both $\omega_i$ and $\E\bar{\bf w}_\loc$ with $\w:=\w(t,x)$ in~\eqref{aud-pts-prob-bd-foreal}. This is justified since the $\omega_i$ converge to $w$ by Lemma~\ref{dlam-converge}, and $\E\bar{\bf w}_\loc$ converges to $w$ by~\eqref{E}. It then remains to show% show that it is sufficient to show that~\eqref{aud-pts-prob-bd-foreal} holds with $\E\bar{\bf w}_\loc$ replaced by $\w=\w(t,x)$ as the parameter in $\muK $ recall that $\E\bar{\bf w}_\loc$ converges to $\w$ by~\eqref{E}. Next, using that the $\omega_i$ also converge to $\w$, we prove that we can reduce~\eqref{aud-pts-prob-bd-foreal} to
%\beq\label{barmuloc-approx}\barmuloc = \frac{1}{2N\epsilon}\sum_{i\in\idxsetx}\mu_K[\omega_i, \lambda_i] \approx  \frac{1}{2N\epsilon}\sum_{i\in\idxsetx}\mu_K[\w, \lambda_i].\eeq From here, the key part of the proof is to show the following
\begin{lemma}\label{lma:aud}Let $\lambda_i$, $i\in \idxsetx$ be any numbers such that $\omega_i=\lambda_i-\lambda_{i-1}$ satisfy the smoothness Assumption 2 and converge uniformly to $\w=\w(t,x)$, as in Lemma~\ref{dlam-converge}. If $\w$ is irrational, then 
\beq\label{aud-pts-prob-bd}
\l|\frac{1}{2N\epsilon}\sum_{i\in\idxsetx}\mu_K[\w, \lambda_i](n) - \int_{0}^{1}\mu_K[\w, \lambda](n)d\lambda\r| \leq C_{N,\epsilon}\e\l(-Kn^2/3\r)\eeq for all $n\in\Z$, where $C_{N,\epsilon} \to 0$ as $\Nepslim$.  
\end{lemma}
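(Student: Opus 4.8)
The plan is to prove the lemma by Fourier analysis on the torus, reading the left-hand side as a Weyl average of the $1$-periodic function $g_n(\lambda):=\mu_K[\w,\lambda](n)$ sampled at the points $\lambda_i$, with its integral being the zeroth Fourier mode. Recalling from \eqref{mu-K} that $\mu_K[\w,\lambda](n)=e^{-\frac K2(n-\w)^2}Q_K(n,\w,\lambda)$ and that $Q_K$ is $1$-periodic in $\lambda$, I expand $g_n(\lambda)=\sum_{k\in\Z}\hat g_n(k)e^{2\pi i k\lambda}$ (absolutely convergent since $g_n$ is analytic in $\lambda$), so that
\beqn
\frac{1}{2N\epsilon}\sum_{i\in\idxsetx}g_n(\lambda_i)-\int_0^1 g_n(\lambda)\,d\lambda=\sum_{k\neq0}\hat g_n(k)\,\frac{1}{2N\epsilon}\sum_{i\in\idxsetx}e^{2\pi i k\lambda_i}.
\eeqn
Everything then reduces to (i) bounding the exponential sums $S_k:=\sum_i e^{2\pi ik\lambda_i}$ and (ii) controlling $\hat g_n(k)$ uniformly in $n$, with a Gaussian factor to spare.

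For (i), the sequence $\lambda_i$ is only \emph{approximately} arithmetic, with increments $\omega_i=\lambda_i-\lambda_{i-1}\to\w$. Setting $\eta:=\max_{\argidxsetx i}|\omega_i-\w|$ (which tends to $0$ by Lemma~\ref{dlam-converge}) and $c:=e^{2\pi ik\w}-1$ — here irrationality of $\w$ enters, forcing $k\w\notin\Z$ and hence $c\neq0$ for every $k\neq0$ — I use the identity $e^{2\pi ik\lambda_{i+1}}-e^{2\pi ik\lambda_i}=e^{2\pi ik\lambda_i}\l(e^{2\pi ik\omega_{i+1}}-1\r)$ to write $cS_k$ as a telescoping sum (modulus $\le2$) plus an error $\sum_i e^{2\pi ik\lambda_i}\l(e^{2\pi ik\w}-e^{2\pi ik\omega_{i+1}}\r)$, which is bounded by $2\pi|k|\eta\cdot(2N\epsilon)$ via $|e^{2\pi ik(\w-\omega_{i+1})}-1|\le2\pi|k|\eta$ (this is exactly where Assumption~2.2 and the uniform convergence $\omega_i\to\w$ are used). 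Since $|c|=2|\sin(\pi k\w)|$, this gives $\frac{1}{2N\epsilon}|S_k|\le\l(\tfrac{1}{2N\epsilon}+\pi|k|\eta\r)\big/|\sin(\pi k\w)|$, together with the trivial bound $\frac{1}{2N\epsilon}|S_k|\le1$.

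For (ii), I will show $|\hat g_n(k)|\le a_k\,e^{-Kn^2/3}$ with $\sum_k a_k<\infty$, uniformly in $n$. Writing $Q_K(n,\w,\lambda)=\calZ_{2K}\l(\lambda-\tfrac{\w+n}2\r)\big/\l[\calZ_K(\lambda-\w)\calZ_K(\lambda)\r]$, the entire $n$-dependence is a shift in the argument of the analytic numerator, so $|\widehat{Q_K}(k)|$ equals a convolution of $|\hat\calZ_{2K}|$ with the Fourier coefficients of $1/(\calZ_K(\cdot-\w)\calZ_K(\cdot))$ and is \emph{independent of $n$}; both factors are $1$-periodic and analytic in a strip about $\R$ (the denominator is strictly positive on $\R$, hence nonvanishing nearby), so their coefficients decay exponentially and $|\widehat{Q_K}(k)|\le Ce^{-\gamma|k|}$. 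Combined with the elementary parabola estimate $e^{-\frac K2(n-\w)^2}\le e^{K\w^2}e^{-Kn^2/3}$ (the exponent $\tfrac K2(n-\w)^2-\tfrac K3 n^2$ has minimum $-K\w^2$), this yields the claim with $a_k=e^{K\w^2}Ce^{-\gamma|k|}$. To finish I factor out $e^{-Kn^2/3}$ and split the $k$-sum at a threshold $K_0$: the tail $\sum_{|k|>K_0}a_k$ is made $<\delta$ by choosing $K_0$ large (using $\frac{1}{2N\epsilon}|S_k|\le1$), while for $1\le|k|\le K_0$ the factor $\l(\tfrac{1}{2N\epsilon}+\pi|k|\eta\r)\big/\min_{1\le|k|\le K_0}|\sin(\pi k\w)|$ tends to $0$ as $\Nepslim$, since $\eta\to0$, $2N\epsilon\to\infty$, and the minimum is a fixed positive number. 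This bounds the whole expression by $C_{N,\epsilon}e^{-Kn^2/3}$ with $C_{N,\epsilon}\to0$, uniformly in $n$.

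The main obstacle is the tension between the small-divisor factor $1/|\sin(\pi k\w)|$ — which for a generic irrational $\w$ is not summable against any polynomial weight in $k$ — and the requirement of a bound uniform in \emph{both} $n$ and $k$. The cutoff argument is designed to avoid any Diophantine or irrationality-measure hypothesis on $\w$: because no rate is demanded of $C_{N,\epsilon}$, I only need the qualitative positivity of $\min_{1\le|k|\le K_0}|\sin(\pi k\w)|$ for each fixed $K_0$, handing the infinitely many remaining modes to the exponential decay of $a_k$. The one genuinely technical point is the uniform-in-$n$ exponential decay of $\widehat{Q_K}(k)$, which rests on exhibiting a strip of analyticity for $1/\calZ_K$; this is where I expect the only real care is required.
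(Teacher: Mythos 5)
Your proof is correct, but it takes a genuinely different route from the paper's. The paper never Fourier-expands $Q$; instead it approximates $Q$ (continuous and periodic in $\lambda$) by step functions, reduces the claim to the statement that the empirical measure $P^{N,\epsilon}$ of the points $\lambda_i\bmod 1$ converges to Lebesgue measure uniformly over intervals, and proves that discrepancy bound via the Erd\H{o}s--Tur\'an inequality. To control the Fourier coefficients $\hat P^{N,\epsilon}(m)$ it partitions $\idxsetx$ into blocks of $N^{1/3}\epsilon$ consecutive indices, approximates $\lambda_{i_k+j}\approx \lambda_{i_k}+j\omega_{i_k}$ on each block with accumulated error $B_{N,\epsilon}=N^{2/3}\epsilon^2\max_i|\omega_{i+1}-\omega_i|$ --- this is where Assumption~2.2's $O(1/N)$ increment bound is genuinely needed --- and then applies the geometric sum formula, with the same Diophantine-free device you use: a fixed cutoff ($n$ in Erd\H{o}s--Tur\'an, your $K_0$) inside which $\min|\sin(\pi k\w)|$ is a fixed positive number by irrationality alone, sent to infinity only after the $\Nepslim$ limit. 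Your argument replaces both steps: the step-function approximation is replaced by the absolutely convergent Fourier expansion of $Q_K(n,\w,\cdot)$ (legitimate here, since $\calZ_K$ is entire and strictly positive on $\R$, so $Q_K$ is analytic in a strip, with uniformity in $n$ immediate from the fact that $Q_K$ depends on $n$ only through $n\bmod 2$), and the block decomposition is replaced by a one-line Abel/telescoping bound $\frac{1}{2N\epsilon}|S_k|\leq\l(\frac{1}{2N\epsilon}+\pi|k|\eta\r)/|\sin(\pi k\w)|$. One small correction to your own accounting: contrary to your parenthetical, your telescoping step uses \emph{only} the uniform convergence $\eta=\max_i|\omega_i-\w|\to0$ from Lemma~\ref{dlam-converge}, not Assumption~2.2 itself --- so your proof actually establishes the lemma under strictly weaker hypotheses than the paper's block argument, which cannot dispense with the $O(1/N)$ increment control. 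What the paper's route buys in exchange is robustness and a stronger intermediate statement: equidistribution of $\lambda_i\bmod1$ in discrepancy holds against merely continuous (indeed Riemann-integrable) test functions, which is the conceptual fact emphasized in Section~\ref{subsec:marg-av}, whereas your argument leans on the analyticity of $Q$ in $\lambda$. The only cosmetic blemish is the boundary index in your telescoping (the increment $\omega_{i+1}$ for the rightmost $i\in\idxsetx$ falls just outside the window); shifting the identity to $e^{2\pi ik\lambda_i}-e^{2\pi ik\lambda_{i-1}}=e^{2\pi ik\lambda_{i-1}}\l(e^{2\pi ik\omega_i}-1\r)$ with $i\in\idxsetx$ fixes this at the cost of an $O(1/N\epsilon)$ term already present in your bound.
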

The rigorous proof of Lemma~\ref{lma:aud} and Proposition~\ref{prop:aud} is given in Appendix~\ref{app:proofs}. We briefly sketch the proof of Lemma~\ref{lma:aud} here, to explain the irrationality constraint. To prove the sum in~\eqref{aud-pts-prob-bd} converges to the integral, we show that the points $\lambda_i\bmod 1$ are asymptotically equidistributed in the unit interval. In other words, we show that the empirical measure $P^{N,\epsilon}$ given by the average of the point masses at $\lambda_i\bmod1$ converges to the Lebesgue measure on the unit interval. To do so, we essentially make the approximation $\lambda_{Nx+i} \approx \lambda_{Nx} + i\w$. Clearly, $\w$ must be irrational for $\lambda_i\bmod1$ to be asymptotically uniformly distributed. Since there is an error incurred by this approximation, we require a quantitative bound on the distance between $P^{N,\epsilon}$ and Lebesgue measure. For this, we use the Erd\H{o}s-Tur\'{a}n inequality~\cite{erdos}, which says that for a measure $\nu$ on the unit interval, we have
\beq\label{erdos-turan}
\sup_{(a,b)\subset [0,1]}|\nu((a,b)) -(b-a)| \leq C\l(\frac1n + \sum_{m=1}^n\frac{|\hat\nu(m)|}{m}\r)
\eeq
for an absolute constant $C$ and any $n=1,2,\dots$, where $\hat\nu(m) = \int \e\l(2\pi i mx\r)\nu(dx).$

Let us consider the limitations due to the constraint $\w(t,x)\notin\mathbb Q$. Consider the set of points $x$ for which $\w(t,x)$ is rational. For this set to have positive Lebesgue measure, there must exist $q\in\mathbb Q$ such that $\w(t,x)=q$ for uncountably many $x$. The case in which these points do not form an interval is pathological and unlikely to appear in practice. Thus, the only kind of non-pathological functions $\w(t,x)$ for which this set has positive Lebesgue measure, are the functions for which $\w(t,x)\equiv q\in\mathbb Q$ in some interval. The analysis of this case is beyond the scope of the paper.

%We make one more remark. Write $\mu^\infty_K$ as
%\beq\label{mu-thru-rho}\muinfty[\omega](n) =\l(\calZ_{K/2}(\omega)\int_{-1/2}^{1/2}Q_K(n\bmod 2,\omega,\lambda)d\lambda\r) \rho_{K/2}[\omega](n)\eeq
%We have $m_1(\gibbs {K/2} \omega)=u_D(\omega)$, but $m_1(\muinfty[\omega])=\omega=u_C(\omega)$. So the term in parentheses adjusts the mean from that of a discrete normal to that of the continuous normal.

\subsection{Discussion}\label{subsec:arr-discush}
We conclude with a few remarks on $\wN$'s rough LE.

\noin\textbf{Why $\wN$'s LE is not smooth.} The contributing factors are (1) the rough scaling, (2) the discreteness of the microscopic system, and (3) that the distributions $\Law( z_{i-1}, z_i)$ are Gibbs measures. The contribution of these three factors is evident in the equation 
\beqs\label{Etildew-discush}
\E w_i &= u_D(\lambda_i) - u_D(\lambda_{i-1}) \\&=\big[\lambda_i - \lambda_{i-1}\big]+ \bigg[u_o(\lambda_i\bmod 1) - u_o(\lambda_{i-1}\bmod 1)\bigg] \\&= \omega_i+u_o(\lambda_i\bmod 1) - u_o(\lambda_{i-1}\bmod 1).\eeqs Due to (1) the rough scaling, the $\lambda_i$ are $O(N)$. Due to (2) the discreteness, the expectations $\E w_i$ depend on $\lambda_i$ through the \emph{nonlinear} function $u_D(\lambda)=u_C(\lambda) + u_o(\lambda)$ rather than through the linear $u_C(\lambda)=\lambda$. %Another way to see this is to note that unlike for continuous normal distributions,
%$$\mathcal N_D(\lambda_i, 1/2K) - \mathcal N_D(\lambda_{i-1}, 1/2K) \stackrel{d}{\neq} \mathcal N_D(\omega_i, 1/K).$$ Here, the two $\mathcal N_D$'s on the left are independent, and $\mathcal N_D(\lambda, 1/2K)$ stands for a discrete normal random variable, with distribution $\rho_K[\lambda]$. 
As a result of the nonlinearity, $\E w_i$ depends not only on the smoothly varying finite differences $\omega_i=\lambda_i-\lambda_{i-1}$, but also on the roughly varying $\lambda_i$ themselves. This makes the LE state of $ \wN$ rough. The specific form of the nonlinearity comes from (3) the Gibbs distribution, but this is perhaps less important.%Why the $\omega_i$ are smoothly varying is an interesting question left for future work. 

\noin\textbf{Why Law$(\wN)$ is a rough LE, and an LE at all.} The above point explains why the $\E w_i$ are roughly varying, implying that the LE state cannot be smooth. But to show $\Law(\wN)$ is rough, we needed to show~\eqref{Ef} is satisfied. We did this using (1) $\Law(w_i)$ belongs to a two-parameter family with parameters $\omega_i$, $\lambda_i$, (2) the $\omega_i$ converge uniformly over $i\in\idxsetx$ to $\w(t,x)$ and (3) the empirical distribution defined by the points $\lambda_i\bmod1$, $i\in\idxsetx$ converges to Lebesgue measure. To prove (3), we used that $\lambda_i-\lambda_{i-1}=\omega_i$. However, for~\eqref{Ef} to hold, any other asymptotically uniformly distributed points $\lambda_i\bmod1$ would do. This is an important point because it allows us to think of $\Law(\wN)$ not as derived from $\Law(\zN)$ (from which the $\lambda_i$ originated), but as a self-standing distribution with certain properties giving rise to a rough LE. This viewpoint can help us identify rough LE states in other contexts.

But does $\Law(\wN)$ constitute an LE state at all? Applying Definition~\ref{def:LE} of an LE state, the joint distribution $\Law(\{w_i\}_{i\in\idxsetx})$ should be asymptotically fully determined, through some parameterization, by $w(x)$ for a continuous function $w:\unit\to\R$. This is not true for our process $\wN$, since we need all the numbers $\lambda_i$ to specify the full joint distribution $\Law(\{w_i\}_{i\in\idxsetx})$. However, we have seen that the smoothly varying $\omega_i$ are the relevant parameters, since the $\lambda_i$ integrate out upon mesoscopic averaging. Since $\omega_i\approx w(t,x)$ for $i\in\idxsetx$, we can still argue that $\Law(\{w_i\}_{i\in\idxsetx})$ is in some sense fully determined by $w(t,x)$.\\

\noindent\textbf{Impact of Rough LE on the PDE.} 
Recall that the PDE governing the hydrodynamic limit $w$ in the rough scaling regime is 
\beq\label{rough-w-discush}\partial_t w = \partial_{xxxx}\e(-2Kw).\eeq For simplicity, we will take $2K=1$. We call~\eqref{rough-w-discush} the ``rough $w$ PDE".  %Taking one anti-derivative gives the rough $z$ PDE, and taking another anti-derivative gives the rough $h$ PDE, as in~\eqref{PDE}.

We claim that the rough LE state of $\wN$ causes $u_D$ to ``average out", and as a result, the argument to the exponential in~\eqref{rough-w-discush} involves $w=\lambda_C(w)=u_C^{-1}(w)$ rather than $\lambda_D(w)=u_D^{-1}(w)$. Our argument will also serve as a correction to the argument given in~\cite{mw-krug} for the appearance of $\lambda_C$. %The PDE derivation in~\cite{gao2020_arrPDE} is similar to that of~\cite{mw-krug} (concluding $\lambda_C$ arises in the PDE for similar reasons), but since the process studied in~\cite{gao2020_arrPDE} has additional features (evaporation and deposition), we are not in a position to evaluate the validity of the argument in this case.

%If we are correct, then the same will be true e.g. for the rough $z$ PDE: the presence of $\lambda_C$ rather than $\lambda_D$ in the rough $z$ PDE will be a result of $\wN$'s rough LE state. Indeed, the rough $z$ PDE follows from the rough $w$ PDE directly by integration
To make our point, it will be useful to compare a PDE obtained in the rough scaling limit to a corresponding PDE obtained in the smooth-scaling limit. We cannot compare $w$ PDEs since the smooth PDE is ill-defined for $w=z_x$. So instead, let us integrate the rough $w$ PDE once with respect to $x$, and then compare the rough $z$ PDE to the smooth $z$ PDE. We get%$\partial_tz = \partial_{xxx}\hat r(z_x)$. 
\beq\label{rough-discush}\partial_tz = \partial_{xxx}\e(-\lambda_C(z_x))\eeq
for the rough $z$ PDE, where we have written $\lambda_C(w)$ for $w$ (and recall $2K=1$.) Meanwhile, the smooth $z$ PDE can be written in the form
\beqs\label{smooth-discush}\partial_t z= \partial_{xxx}\frac{d}{d\gamma}\e\l(-\gamma\partial_x\lambda_D(z)\r)\big\vert_{\gamma=0}.\eeqs

The fact that the exponential gets differentiated at zero in~\eqref{smooth-discush} has to do with the smooth scaling. For us, the more important difference between the two PDEs is that the smooth $z$ PDE contains $\lambda_D$ while the rough $z$ PDE contains $\lambda_C$. The function $\lambda_D$ reflects the discreteness of the microscopic system, so its presence in the PDE~\eqref{smooth-discush} governing the non-discrete (i.e. $\R$-valued) limit $z(t,x)$ is an interesting feature of the smooth scaling regime.

%We will argue that the rough LE of $\wN$ explains the presence of $\lambda_C$ in the rough scaling $z$ PDE. This argument will also serve as a correction to the argument given in~\cite{mw-krug} for the appearance of $\lambda_C$. 
% We can find $\lambda_i$ by matching the first moments of these two distributions, i.e. we take $\lambda_i=\lambda_D(\E z_i)$.
Now, both the smooth and rough $z$ PDEs follow from the microscopic dynamics via $\LN\pi_i$. We have $(\LN\pi_i)(\mbf z) = r(w_{i+1})-3r(w_i) + 3r(w_{i-1})+r(w_{i-2})$, where $w_i=z_i-z_{i-1}$. We then move the three finite differences onto a test function, and replace $r(w_i)$ with the average of $r(w_j)$, $j\in i\pm N\epsilon$. This average should be approximately deterministic, so we replace it with its expectation. E.g. for $i=\lfloor Nx\rfloor$, we consider
\beq\label{rate-expect-avg}\E\bigg[\frac{1}{2N\epsilon}\sum_{i\in \idxsetx }r(w_i)\bigg].\eeq
We must then express this average of expectations as a function of the converging microscopic process, and from here the PDE essentially follows. In both the smooth and rough scaling regime, one possible framework for doing so, which centers around $\Law(\zN)$, is the following:
\begin{enumerate}
\item Introduce parameters $\lambda_i$ such that 
$$\mathbb P(\zN=\mbf z) \approx \prod_{i=1}^Ne^{-Kz_i^2+2K\lambda_iz_i}/\calZ_{\bml^N}.$$ This is the Gibbs product measure with parameters $\lambda_i$, $i=1,\dots, N$. 
\item Express~\eqref{rate-expect-avg} in terms of the $\lambda_i$: using the rate expectation formula~\eqref{r-exp}, we get
\beq\label{discush:rate-av}\E\bigg[\frac{1}{2N\epsilon}\sum_{i\in \idxsetx }r( z_i-z_{i-1})\bigg]\approx \frac{1}{2N\epsilon}\sum_{i\in \idxsetx }\e(\lambda_{i-1}- \lambda_i).\eeq 
\item \textbf{Write~\eqref{discush:rate-av} in terms of the appropriate, \emph{converging} microscopic process.}
\end{enumerate}
We now review the argument in~\cite{mw-krug} for why $\lambda_C$ appears in the rough $z$ PDE. 

Namely, Marzuola and Weare assume $\lambda_i=\lambda_D(\E z_i)$, and $\E z_i\approx Nz(i/N)$, where $z$ is the hydrodynamic limit of $\zN$ under amplitude scaling $N^1$ (this corresponds to taking the limit of $\hN$ in the rough scaling regime). We have omitted the $t$ variable in $z$ for brevity.
%The third point is where the smooth and rough scaling regimes differ. Before explaining the difference For example, in the smooth scaling regime where both $\E z_i$ and 
%let us make the assumption that $\Law(\zN)$ is given by a Gibbs product measure exactly. In other words, 
%define $\lambda_i$ as the parameters such that $\Law(z_i)\approx \rho_K[\lambda_i]$. The fact that $\Law(z_i)$ is approximately a Gibbs distribution is a reasonable assumption in both the smooth and rough scaling regimes. %In the rough scaling regime, $\omega_i=\lambda_i-\lambda_{i-1}$ are smoothly varying and in the smooth scaling regime, $N\omega_i$ are smoothly varying. For simplicity, in both regimes we will use the approximation $\omega_i\approx \ol{\bm\omega}_{\loc}$ for $i\in \loc=\idxsetx $.
They then observe that $\lim_{\kappa\to\infty}\kappa^{-1}\lambda_D(\kappa u) = \lambda_C(u) = u$. Using the decomposition $\lambda_D =\mathrm{id.} + \lambda_o$ introduced in this paper, this observation is equivalent to saying $\kappa^{-1}\lambda_D(\kappa u)= u + \kappa^{-1}\lambda_o(\kappa u) \to u$ as $\kappa\to\infty$. One then has \beqs\label{mw-lambda}
\lambda_i \approx \lambda_D(\E z_i)\approx \lambda_D\l(Nz\l(i/N\r)\r) \approx Nz\l(i/N\r),\eeqs where in the last approximation we have discarded $\lambda_o(Nz(i/N))$. Substituting~\eqref{mw-lambda} into~\eqref{discush:rate-av} gives $\e\l(-\partial_xz(x)\r)$ for the average rate expectation. 
% It follows that $\ol{\bm\omega}_{\loc}\approx \lambda_i-\lambda_{i-1}\approx\partial_x\lambda_C(z_x)$, and hence $$\frac{1}{2N\epsilon}\sum_{i\in \idxsetx }\E[r( w_i)] \approx \hat r(\partial_x\lambda_C(z)),$$ from which we get the rough PDE~\eqref{rough-discush}. 
Thus, Marzuola and Weare claim that the reason $\lambda_C$ arises in the rough PDE is that $\lambda_D(\kappa u)\approx \kappa\lambda_C(u)=\kappa u$ for large $\kappa$, and that in the rough scaling regime, the argument to $\lambda_D$ is large (in contrast to the smooth scaling regime, where the argument to $\lambda_D$ is $O(1)$). 

This sequence of approximations is flawed for two reasons, both of which have to do with the fact that we cannot discard order $O(1)$ terms when estimating $\lambda_i$, since we ultimately need to estimate the finite difference $\lambda_i-\lambda_{i-1}$. First, as we explained in Section~\ref{subsec:omega-tilde-w}, the approximation $\E z_i = Nz(i/N) + o(1)$ would imply that $\E w_i$ are smoothly varying and $\omega_i=\lambda_i-\lambda_{i-1}$ are roughly varying, when in fact the opposite is true. Therefore, the second approximation in~\eqref{mw-lambda} is off by order $O(1)$. The third approximation is also off by $O(1)$, since it discards the non-vanishing $\lambda_o(Nz(i/N))$. 
 
A more fundamental reason for why the argument is flawed is that the expression 
\beq\label{inferno}\lambda_i - \lambda_{i-1} = \lambda_D(\E z_i) - \lambda_D(\E z_{i-1})\eeq is not written in terms of a converging process. Indeed, while we expect $N^{-1}\E z_i\stackrel{N}{\approx}z(i/N)$, the expression~\eqref{inferno} depends on the non-converging $\E z_{i}\bmod 1$. But the third step above requires us to write the average of $\e(\lambda_{i-1}-\lambda_i)$ in terms of a converging process, and it is not at all clear how to do this using~\eqref{inferno}. This is where the $\wN$ rough LE comes in. But before we continue, note that in the smooth scaling regime~\eqref{inferno} \emph{is} written in terms of a converging process: from numerical simulations (see Figure~\ref{fig:ASEP-crystal}), we should have $\E z_i\stackrel{N}{\approx} z(i/N)$ and $N(\E z_i - \E z_{i-1})\stackrel{N}{\approx}z_x(i/N).$ Using an extra power of $N$ multiplying the rate expectation average~\eqref{discush:rate-av}, we get the smooth $z$ PDE~\eqref{smooth-discush}, which involves $\lambda_D$. 

Returning to the rough scaling regime, another framework for deriving the PDE parallels 1-3 above, but instead uses the parameterization $\Law(w_i)\approx \mu_K[\omega_i,\lambda_i]$. Since we know the $\omega_i$ are smoothly varying, uniformly over $i\in\idxsetx$, let us for simplicity assume $\omega_i\equiv\omega$ for $i\in\idxsetx$. We then have
\beq\E\bigg[\frac{1}{2N\epsilon}\sum_{i\in \idxsetx }r( w_i)\bigg]\approx \e(-\omega).\eeq It remains to write $\omega$ in terms of a converging process. Of course, we already know from Lemma~\ref{dlam-converge} that $\omega\approx \E\ol{\bf w}_\loc$, which is our desired process converging to $w=z_x$ (we think of $z$ as the antiderivative of $w$). But we got to that lemma by assuming the Gibbs distribution on $\zN$. Using only the perspective of the $\wN$ distribution, and the insight that the $\lambda_i\bmod 1$ are asymptotically uniformly distributed, we have
$$
\frac1{2N\epsilon}\sum_{i\in\loc}\Law(w_i)\approx \frac1{2N\epsilon}\sum_{i\in\loc}\mu_K[\omega, \lambda_i]\approx \muinfty[\omega].$$ Thus, we should think of $\omega$ as the unique parameter such that $$\E\ol{\bf w}_\loc=m_1\bigg(\!\sum_{i\in\loc}\Law(w_i)/2N\epsilon\bigg) = m_1(\muinfty[\omega]).$$ Now, recall our computation~\eqref{mu-infty-mean} of $m_1(\muinfty[\omega])$, which finally explains our claim about $u_D$ averaging out:
\beq\label{mu-infty-mean-discush}m_1(\muinfty[\omega]) = \int_0^1u_D(\lambda) -\int_0^1 u_D(\lambda -\omega) =u_C(\omega)=\omega.\eeq Therefore, $\omega = u_c^{-1}(\E\ol{\bf w}_\loc) = \lambda_C(\E\ol{\bf w}_\loc)$. 

To summarize, we end up integrating $u_D$ over $\lambda\in[0,1]$ because the $\lambda_i\bmod 1$ are asymptotically uniformly distributed (and roughly varying). Therefore the rough LE state of $\wN$ led to the integration~\eqref{mu-infty-mean-discush}  which erased the discreteness in $u_D$, so we ended up not with $\lambda_D =u_D^{-1}$ but with $\lambda_C$. Interestingly enough, $ \wN$ owes its rough LE state to the discreteness of the microscopic system. And this rough LE state, in turn, caused the discreteness to be washed out in the continuum limit!

\section{Conclusion}\label{sec:conclude}
In this paper, we have discovered a new, \emph{rough} local equilibrium (LE) state, and studied its properties through the lens of the three key limits~\eqref{V},~\eqref{E}, and~\eqref{Ef}. These limits do hold under a rough LE,  but the stronger pointwise limits \eqref{Ep}, \eqref{Efp} do not. This is because mesoscopic window averaging is essential to smooth out the roughness of ensemble averages. Indeed, unlike in smooth LE states, most observable \emph{expectations} $\E[f( w_i)]$ vary roughly with $i$ (that $f( w_i)$ vary roughly before taking expectations is unsurprising). In Section~\ref{sec:theory}, we uncovered the mechanism underlying the convergence~\eqref{Ef}: mesoscopic averages of the single site marginals $\Law( w_i)$ belong to a mean-parameterized measure family in the $N\to\infty$, $\epsilon\to0$ limit. Finally, we explained in Section~\ref{subsec:arr-discush} why $ \wN$'s rough LE state is a product of the rough scaling regime and discreteness. In turn, the rough LE led to an integration which erased the discreteness of $u_D$, leaving $\lambda_C$ rather than $\lambda_D =u_D^{-1}$ in the PDE. 

The rough LE state we have discovered is not isolated to the $\wN$ process studied here. In~\cite{metrop_pde}, we show that a crystal surface process with Metropolis-type jump rates also has a rough LE state. Specifically, we study $\vN$ given by a third order finite difference of $\hN$, which is scaled as $O(N^3)$. For the Metropolis rate process, this is the scaling regime leading to a nontrivial PDE with exponential nonlinearity. (See~\cite{gao2020} for an informal derivation of the rough scaling PDE limit of the Metropolis height process). Even though $\Law(\zN)$ is not local Gibbs, the LE state of $\vN$ is still rough. This reinforces the centrality of (1) the rough scaling and (2) the discreteness of the microscopic system in contributing to the rough LE state. Interestingly, the LE state of $\vN$ is qualitatively very similar to the LE state of $\wN$ for the Arrhenius process. This similarity could be related to the fact that the invariant measure for the $\zN$ processes are the same: the $\lambda_i\equiv 0$ standard Gibbs measure $\Phi(z)\propto \e\l(-KH(z)\r)$. However, this merits further investigation, which we leave for future work.

\appendix
\section{Local Equilibrium of Zero Range Process}\label{app:ZR-LE} This section summarizes results from Chapter 2.3 in~\cite{kipnisbook}. We stated in the main text that the local equilibrium measure of the zero range process is given by a product of mean-parameterized measures $\mu[\cdot]$, of the form $\Law( \vN)\approx \otimes_{i=1}^N\mu\l[v\l(t,i/N\r)\r].$ This is true, but it is slightly more convenient to parameterize these measures using another parameter which is not the mean. To that end, define a family $\{\nu[\phi]\mid 0\leq\phi<\infty\}$ of measure on $\{0,1,2,\dots\}$, via
%$$\Law( \vN)\approx \bigotimes_{i=1}^N\mu\l[\phi\l(v\l(t,\frac iN\r)\r)\r],$$ where $\{\mu[\phi]\mid 0\leq\phi\leq\phi^*\}$ is a measure family which is most naturally parameterized by a parameter $\phi$ which is not the mean. 
$$\nu[\phi](n) = \frac{1}{Z(\phi)}\frac{\phi^n}{g(n)!},\quad \text{where}\quad g(n)!=\prod_{k=1}^ng(k),$$ and $g(0)!=1$ by convention. Here, $Z(\phi)$ is the normalization constant $Z(\phi) = \sum_{n=0}^\infty\phi^n/g(n)!$. We assume that the rates $g(n)$ are such that $Z(\phi)<\infty$ for all $\phi\geq0$, and that $\lim_{\phi\to\infty}Z(\phi)=\infty$. Define
$$v(\phi) = m_1(\nu[\phi]) = \sum_{n=0}^\infty n\frac{1}{Z(\phi)}\frac{\phi^n}{g(n)!}.$$ The assumptions on $Z(\phi)$ guarantee that $v(\phi)$ is strictly increasing and that $v:\R_+\to\R_+$ is bijective. Therefore, our desired mean-parameterized LE measure family is $\{\mu[v]\mid v\geq 0\}$, where $\mu[v]=\nu[\phi(v)]$ and $\phi=v^{-1}$. 

Recall that given a mean-parameterized measure family $\mu[\cdot]$ and an observable $f$, we define $\hat f(v) = \mu[v](f)$. Therefore, for the zero range LE, we compute $\hat f$ to be
\beq\label{hatf-ZR}
\hat f(v) = \sum_{n=0}^\infty f(n)\frac{1}{Z(\phi(v))}\frac{\phi(v)^n}{g(n)!}.
\eeq
We use Equation~\eqref{hatf-ZR} to compute the $(v,\hat f(v))$ curves shown in Figure~\ref{fig:Efprime}.

\section{Numerics}\label{app:num}
\subsection{Kinetic Monte Carlo}\label{app:KMC}
Each of the interacting particle systems discussed in this paper is a Markov jump process $\{\vNtild(t)\}_{t\geq0}$, the paths of which  are step functions, with $\vNtild(t) = \vN^k$ when $t\in [t_k, t_{k+1})$. Therefore, simulating the process in a time interval $[0, T(N)]$ amounts to drawing pairs $(\vNtild^k, t_k)$ according to the law of the process until $t_k$ first exceeds $T(N)$. We do so using the Kinetic Monte Carlo algorithm (KMC)~\cite{kmc}. 

To simulate processes $\vN(t)=\vNtild(N^\alpha t)$, $N=1,2,\dots$  we do the following. First, we fix a macroscopic initial profile $\v_0:\unit\to\R$ and a macroscopic time $t$. Next, we construct a sequence of distributions $\muN_0$, $N=1,2,\dots$ associated to $\v_0$ as in Definition~\ref{def:init}. For simplicity, we take $\muN_0$ such that $\E_{\muN_0}[ v_i(0)] = \v(i/N)$. For a given $N$, we draw $n$ initial profiles $\vN^{(k)}(0)$, $k=1,\dots, n$ independently from $\muN_0$. We then use KMC to generate independent processes $\vN^{(k)}$ corresponding to the initial profiles, evolving each of them forward until time $N^\alpha t$. To estimate expectations $\E[f( \vN(t))]$, we then take
\beq\label{app:usual-av} \E[f( \vN(t))]\approx \E^n[f( \vN(t))]:=\frac 1n\sum_{k=1}^n f(\vNtild^{(k)}(N^\alpha t)).\eeq

For the crystal processes simulated in this paper --- the slope process $\zN$ in the smooth scaling regime, and the curvature process $ \wN$ in the rough scaling regime --- it is simpler to evolve forward the associated height process. We therefore first fix an initial height profile $\h_0$, and draw $h_i(0) = \lfloor N^\beta \h_0(i/N)\rfloor + \xi_i$, where $\beta=1,2$ for the smooth and rough scaling regimes, respectively, and $\xi_i$ are independent Bernoullis. One can check that the induced distribution $\muN_0$ of initial slope or curvature profiles is associated to $h_0'$ or $h_0''$ as in Definition~\ref{def:init}, with $\beta=0$. We then evolve $\hN(t)$ forward, and recover the process of interest by taking one or two finite differences. 

%\subsubsection{Hard Core Exclusion Process}
%\begin{wrapfigure}{r}{0.4\textwidth}
%\centering
%\includegraphics[width=0.4\textwidth]{UsedFigures_Revised_Paper0/ASEP.png}
%\caption{Here, $\vN(t)=\vNtild(N^2t)$ is the time-rescaled asymmetric exclusion process. $\E[v_i(t)]$ seems to vary smoothly with $i$, lending support to the hypothesis that $\vN$ has a smooth LE.}
%\label{fig:ASEP}
%\end{wrapfigure}
%
%The dynamics are given by jumps $\mbf v \mapsto \mbf v^{i,j}$, $|i-j|=1$, where $\mbf v^{i,j}$ is the particle configuration in which $v_i$ and $v_j$ exchange their values. These jumps occur with transition rates $r^{i,i+1}(\mbf v) =r^{i+1,i}(\mbf v)=(v_i-v_{i+1})^2(1 +\alpha v_{i-1} + \alpha v_{i+2})$. Figure~\ref{fig:ASEP} depicts $i/N\mapsto \E[v_i(t)]$ for $\vN(t)=\vNtild(N^2t)$, $N=2000$, and $t=10^{-3}$. 

\subsection{Time Averaging}\label{app:time-av}
The remainder of Appendix~\ref{app:num} pertains to numerical computations for the rough scaling curvature process $ \wN(t) = \wNtild(N^4t)$. As discussed in the main text, the spatially averaged rate observable $f( \wN)=\bar r({\bf w}_{i\pm N\epsilon})$ needed to check~\eqref{Ef} has a very large variance. To get much lower variance, and low bias, estimates of this expectation, we will show that for $\Delta$ chosen appropriately, we have
\beq\label{Delt-approx}\E[f( \wN(t))]\approx \E_\Delta[f( \wN(t))]:=\frac1\Delta\int_{I_t}\E[f( \wN(s))]ds,\eeq where $I_t$ is any length $\Delta$ interval containing $t$. We estimate the time averaged expectation using the significantly lower variance estimator below:
\beq\label{app:time-av-eq}\E_\Delta[f( \wN(t))]\approx \E^n_\Delta[f( \wN(t))]:=\frac 1n\sum_{k=1}^n \frac{1}{N^4\Delta}\int_{N^4I_t}f(\wNtild^{(k)}(s))ds.\eeq The time integrals for each path $\wN^{(k)}$ can be computed exactly because $ \wN(t)$ is a step function in time. We will show that there is a $\Delta$ such that~\eqref{Delt-approx} holds for each rate function $f( \wN) = r( w_i)$, $i=1,\dots, N$. This same $\Delta$ will work to estimate $\E\bar r({\bf w}_{\idxsetx})$. First, we choose $\Delta$ small enough that decreasing $\Delta$ further has no effect on the estimator $\E^n_\Delta[r( w_i)]$, except perhaps to increase its variance. This is shown in the left panel of Figure~\ref{fig:sample-v-time}, with $N=6000$. We take $\Delta = 10^{-14}$, and note that the estimators $\E^n_\Delta[r( w_i)]$, $\E^n_{\Delta/2}[r( w_i)]$, and $\E^n_{\Delta/4}[r( w_i)]$ are all extremely close to one another (with $n=1000$). Next, we compare $\E^n[r( w_i)]$ to $\E_\Delta[r( w_i)]$ for increasing $n$. This is shown in the right panel of Figure~\ref{fig:sample-v-time}. It is clear that with increasing $n$, the instantaneous-time rate expectation estimator $\E^n[r( w_i)]$ approaches $\E_\Delta[r( w_i)]$, although even for the largest $n=10^6$, $\E^n[r( w_i)]$ has not yet converged. Increasing $n$ further is computationally intractable. 
\begin{figure}
\centering
\vspace{-1.6cm}
\includegraphics[width=0.8\textwidth]{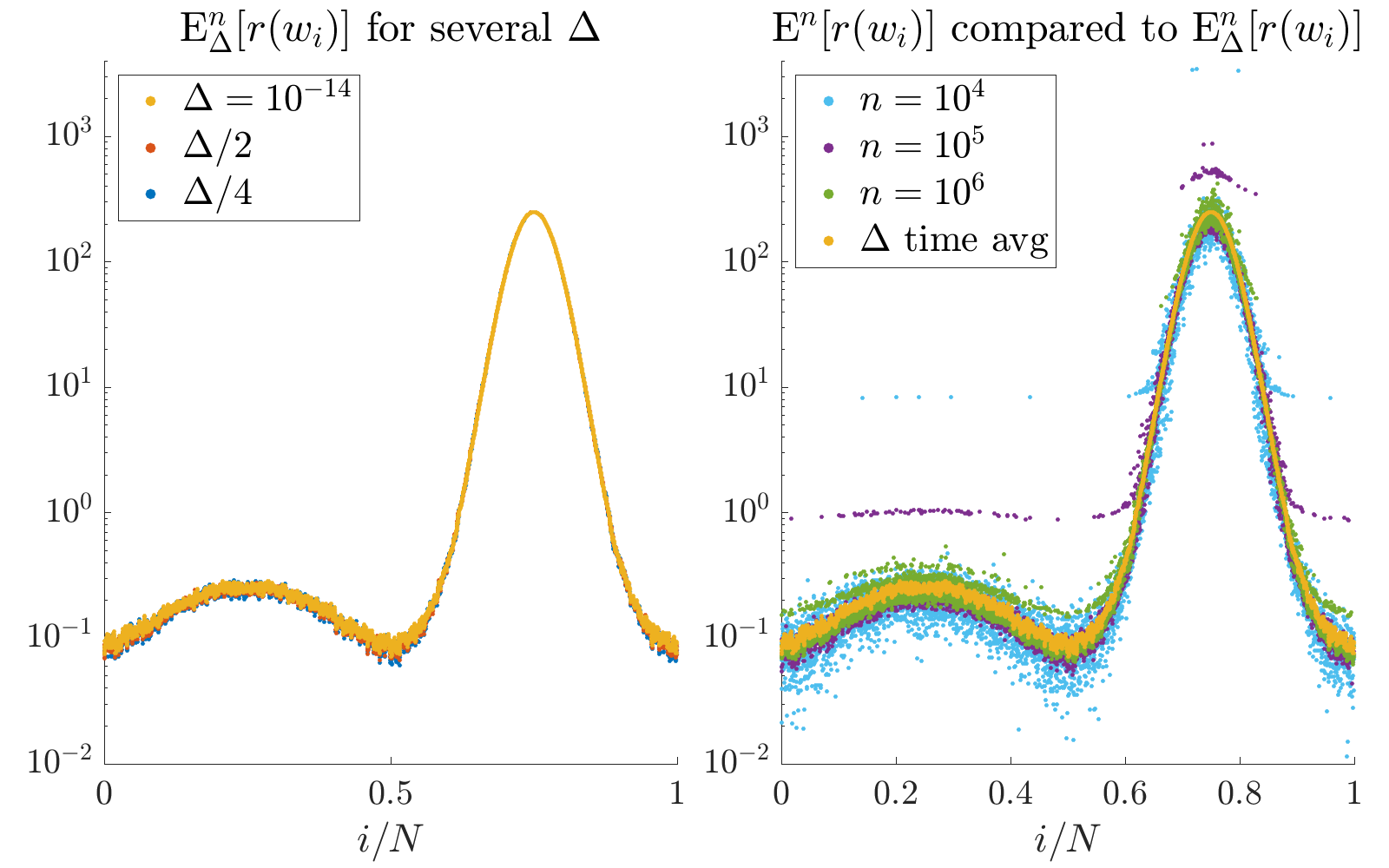}
\caption{Left: for a very small $\Delta$, we show that the effect of decreasing $\Delta$ on the estimator $\E^n_\Delta[r( w_i)]$ is negligible. Right: As $n$ increases, the instantaneous-time rate expectation estimator $\E^n[r( w_i)]$ approaches $\E_\Delta[r( w_i)]$.}
\vspace{-0.5cm}
\label{fig:sample-v-time}
\end{figure}
\subsection{Choice of $\epsilon(N)$ to verify~\eqref{E}}\label{app:epsilon-of-N}
Recall~\eqref{E}: $\frac{1}{2N\epsilon}\sum_{\argidxsetx i} \E w_i(t)\to\w(t,x)$ as $\Nepslim$.
\begin{figure}
\centering
\vspace{-0.3cm}
\includegraphics[width=0.85\textwidth]{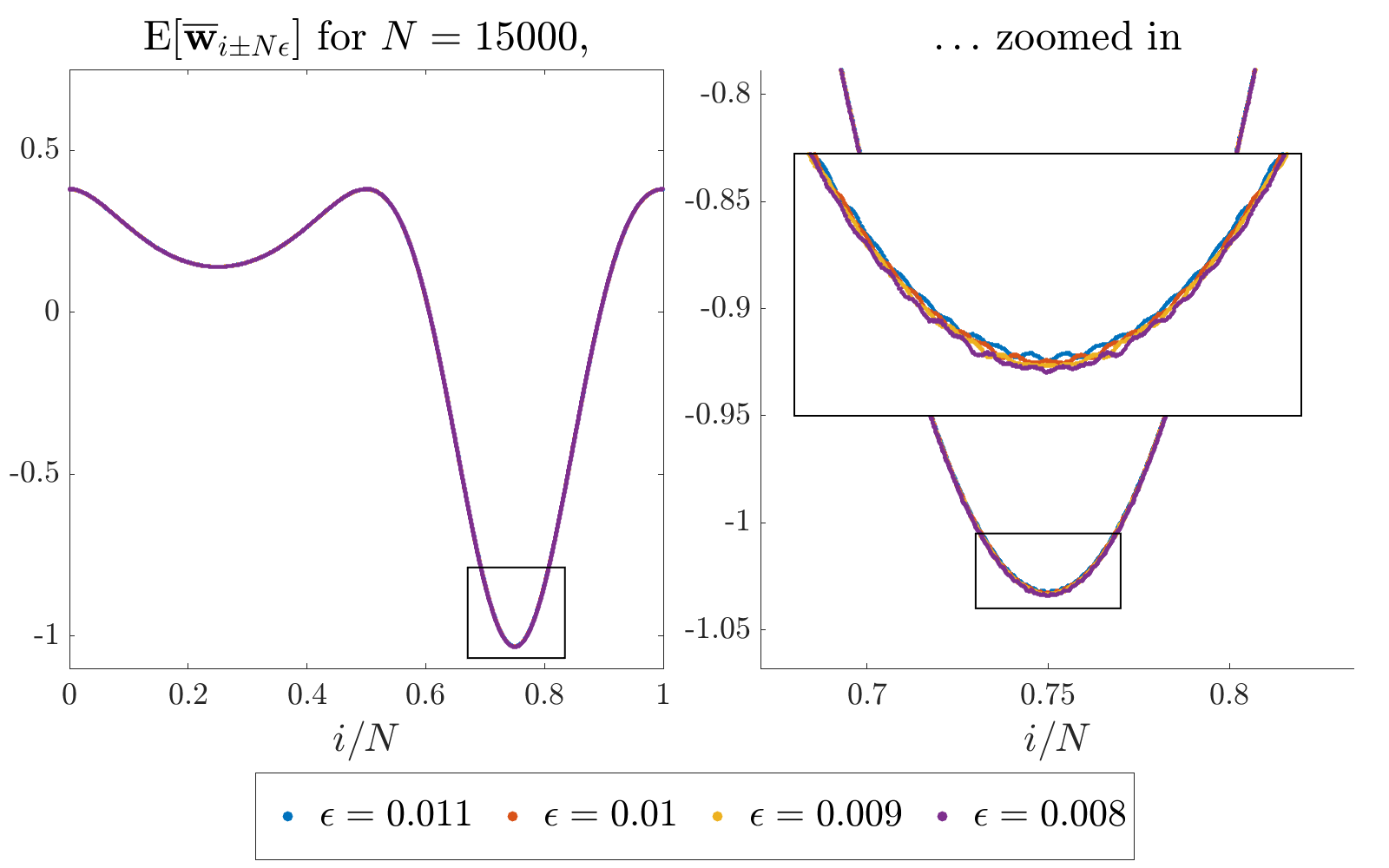}
\vspace{-0.25cm}
\caption{$\E[\overline{\bf w}_{i\pm N\epsilon}]$ for fixed $N$ and decreasing $\epsilon$, on three scales. From the perspective of the large and medium scales, $\E[\overline{\bf w}_{i\pm N\epsilon}]$ has nearly converged for $\epsilon\leq0.011$.}
\vspace{-0.8cm}
\label{fig:bar-omega-converge}
\end{figure} We use the following proxy for $N\to\infty,\epsilon\to0$ convergence. For each $N$, we choose a ``good'' $\epsilon(N)$: for $\epsilon>\epsilon(N)$, $\E[\overline{\bf w}_{i\pm N\epsilon}]$ is smooth but biased, and for $\epsilon<\epsilon(N)$, it is unbiased but rough. We then check that $\E[\overline{\bf w}_{N(x\pm\epsilon(N))}]$ is converging as $N\to\infty$. Figure~\ref{fig:bar-omega-converge} shows $\E[\ol{\bf w}_{i\pm N\epsilon}]$ for $N=15000$ and several values of $\epsilon$, on three scales. The largest scale is shown in the lefthand plot, with the medium scale region indicated by the rectangle on the bottom right. The medium and small scales are shown in the righthand plot. Relative to the variation in the range on the large and medium scales, $\E[\ol{\bf w}_{i\pm N\epsilon}]$ has nearly converged for $\epsilon\leq0.011$. But it has not converged on the small scale, which we use to pick $\epsilon(N)$. For $N=15000$ shown here, we take $\epsilon(N)=0.008$. The process is similar for other $N$'s. 

\subsection{Local Gibbs Computations}\label{app:loc-gibbs}
\begin{wrapfigure}{r}{0.4\textwidth}
\centering
\vspace{-0.6cm}
\includegraphics[width=0.4\textwidth]{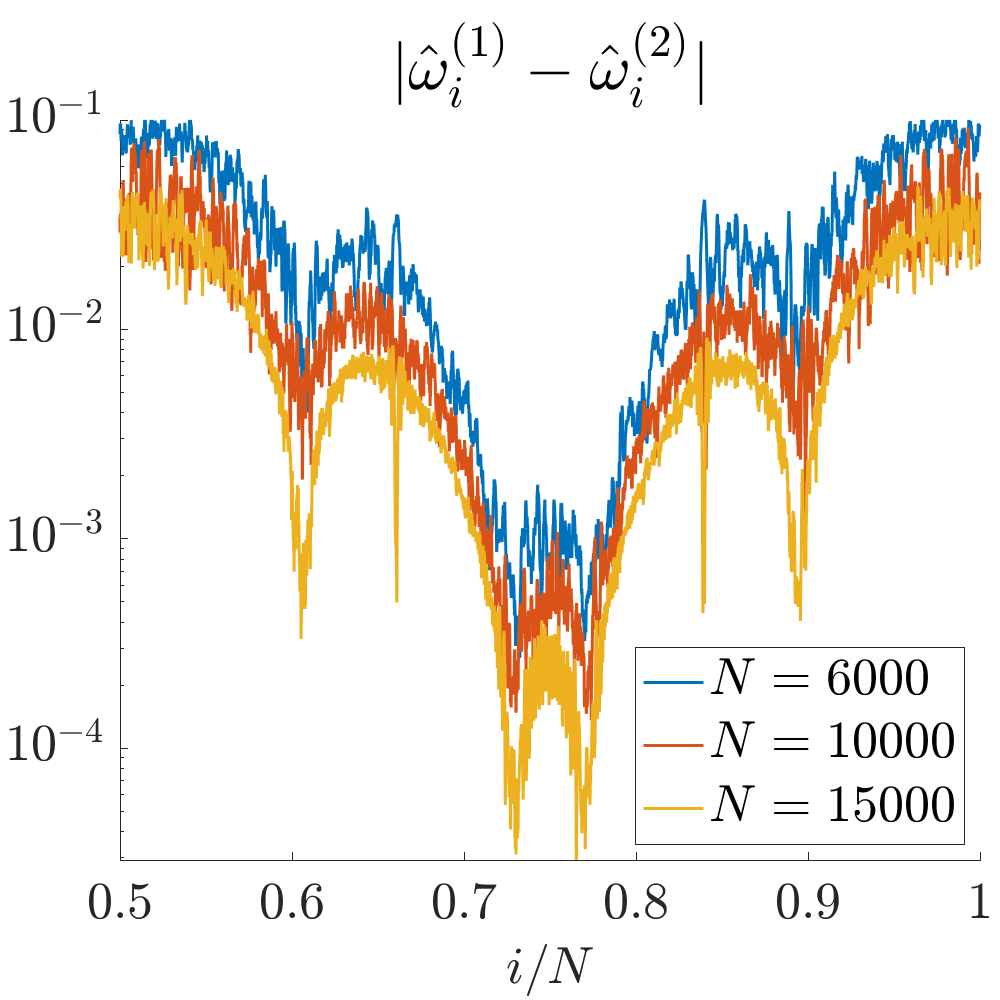}
%\vspace{-0.7cm}
\caption{$\hat\omega_i^{(1)}$, $\hat\omega_i^{(2)}$ are the estimators used to verify Props 1.1, 1.2. They become closer for larger $N$.}
\vspace{-0.4cm}
\label{fig:diff-omega}
\end{wrapfigure}

%Recall that we need to show there exist $\lambda_1,\dots, \lambda_N$ such that $\max_{i=1,\dots, N}H\bigg(\mathrm{Law}( w_i)\,\bigg\vert\, \mu_K[\omega_i, \lambda_i]\bigg)\to0$ as $N\to\infty$, and such that $\max_{\argidxsetx i}N|\omega_{i+1}-\omega_{i}|$ stays bounded as $N\to\infty$ and then $\epsilon\to0$. 
We use two different estimates of $\omega_i$ in the relative entropy calculation $H(\mathrm{Law}( w_i)\,\vert\, \mu_K[\omega_i, \lambda_i])$ needed to check Property 1.1, and for the finite difference computation $N|\omega_{i+1}-\omega_{i}|$ in Property 1.2. We call these estimates $\hat\omega_i^{(1)}$ and $\hat\omega_i^{(2)}$, respectively.

For the relative entropy computation, we find $\lambda_i$ as $\lambda_i = \lambda_D(\E_\Delta[z_i])$, where $\E_\Delta[z_i]$ is computed by taking cumulative sums of the $\E_\Delta[ w_j]$ and then subtracting off a constant to ensure $\sum_i\E_\Delta[ z_i ]= 0$. Then 
\beq\label{omega-via-lamD}\hspace{-0.5cm}\hat\omega_i^{(1)} = \lambda_D(\E_\Delta[ z_i]) - \lambda_D(\E_\Delta[ z_{i-1}]).\eeq This determines the measure $\mu_K[\hat\omega_i^{(1)}, \lambda_i]$ to which we compare $\Law( w_i(t))$. To estimate the distributions $\Law( w_i(t))$, we compute empirical probability mass functions based on $1000$ independent samples of $ w_i(t)$.

%\begin{wrapfigure}{r}{0.6\textwidth}
%\centering
%\includegraphics[width=0.6\textwidth]{UsedFigures_Revised_Paper0/u_of_lambda.png}
%\caption{Plot of $u_D(\lambda) = m_1(\rho_K[\lambda])$ and its inverse $\lambda_D$, when $K=5$. The oscillations in $u_D$ and $\lambda_D$ are due to the discreteness of the support of $\rho_K[\lambda]$.}
%\vspace{-0.2cm}
%%\label{fig:u-lambda}
%\end{wrapfigure} 
To test the smoothness of $\omega_i$, we compute $\omega_i$ as 
\beq\label{omega-via-rate}\hat\omega_i^{(2)} = -\log\l(\E_\Delta [r( w_i)]\r)/(2K).\eeq Recall that $\mu_K[\omega_i,\lambda_i](r) = e^{-2K\omega_i}$. The estimates~\eqref{omega-via-lamD} and~\eqref{omega-via-rate} would be very close if $\Delta\ll1$ and $\Law( w_i) = \mu_K[\omega_i,\lambda_i]$ exactly. But for finite $N$, we find that $\hat\omega_i^{(2)}$ is a significantly smoother estimate of $\omega_i$. Figure~\ref{fig:diff-omega} confirms that the two estimates become closer as $N$ increases.

\section{Proofs from Section~\ref{sec:theory}}\label{app:proofs}Here we take Assumptions~\ref{gibbs-theory} to be true.
%\beq\limsup_{\epsilon,\Delta\to0}\limsup_{N\to\infty}\sup_{\argidxsetx i}N|\delta^2\lambda_{N}_i(t)|<C=C(t,x)<\infty.\eeq 
\begin{proposition}
If $\w(t,x)\in\Z$, then $\l(\E w_j\r)_{\argidxsetx j}$ is locally smoothly varying as $N\to\infty$, $\epsilon\to0$. In other words, 
$$\ol{\lim_{\epsilon\to0}}\;\ol{\lim_{N\to\infty}}\max_{\argidxsetx j}|\E w_j - \E w_{j-1}| = 0.$$  
\end{proposition}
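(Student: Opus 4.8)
The plan is to work directly from the representation~\eqref{EW-delam}, which after substituting $\lambda_{i-1}=\lambda_i-\omega_i$ reads $\E w_i = \omega_i + u_o(\lambda_i) - u_o(\lambda_{i-1})$. Taking the nearest-neighbor difference, the $\omega$ and $u_o$ contributions separate cleanly:
$$\E w_j - \E w_{j-1} = (\omega_j - \omega_{j-1}) + \big[u_o(\lambda_j) - 2u_o(\lambda_{j-1}) + u_o(\lambda_{j-2})\big].$$
It therefore suffices to show that the maximum over $\argidxsetx j$ of each bracket vanishes in the iterated limit $N\to\infty$, $\epsilon\to0$. The first bracket is handled immediately by Assumption 2.2: since $\max_j|\omega_j-\omega_{j-1}| = N^{-1}\max_j N|\omega_j-\omega_{j-1}|$ and the second factor is asymptotically bounded by $C$, the product tends to zero as $N\to\infty$ for each fixed $\epsilon$, so its iterated limit is $0$.

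The crux is the second bracket, and this is exactly where the hypothesis $\w:=\w(t,x)\in\Z$ is used. Writing $\omega_i = \w + \delta_i$ with $\delta_i := \omega_i - \w$, and using that $u_o$ is $1$-periodic together with $\w\in\Z$, each first difference collapses:
$$u_o(\lambda_j) - u_o(\lambda_{j-1}) = u_o(\lambda_{j-1}+\omega_j) - u_o(\lambda_{j-1}) = u_o(\lambda_{j-1}+\delta_j) - u_o(\lambda_{j-1}),$$
since shifting the argument by the integer $\w$ leaves $u_o$ unchanged. Because $u_o = u_D - \mathrm{id}$ is smooth and periodic (being the oscillating part of the smooth $u_D=m_1(\rho_K[\cdot])$ in~\eqref{uD-lambda}), it is Lipschitz with some constant $L$, so this difference is at most $L|\delta_j|$; the identical argument applied to $\lambda_{j-1}=\lambda_{j-2}+\omega_{j-1}$ controls $u_o(\lambda_{j-1})-u_o(\lambda_{j-2})$ by $L|\delta_{j-1}|$. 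Hence the second bracket is bounded by $L(|\delta_j|+|\delta_{j-1}|) \le 2L\max_{\argidxsetx i}|\omega_i-\w|$, which vanishes as $N\to\infty$ and then $\epsilon\to0$ by Lemma~\ref{dlam-converge}. Combining the two estimates yields the claim.

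The genuinely delicate point — and the conceptual heart of the argument — is the collapse enabled by integrality. When $\w\notin\Z$ the map $\lambda_{j-1}\mapsto\lambda_{j-1}+\w$ is a nontrivial rotation of the circle, and the roughly-varying $\lambda_i\bmod 1$ of Corollary~\ref{corr-omega-Ew} would keep $u_o(\lambda_j)-u_o(\lambda_{j-1})$ of order one; this is precisely the mechanism making $\E w_i$ rough in general. Integrality of $\w$ is what turns this order-one shift into the negligible shift by $\delta_j\to0$. The only remaining bookkeeping issue is that for $j$ at the edge of the window $\idxsetx$ the indices $j-1,j-2$ may fall just outside it; since the window contains $2N\epsilon\to\infty$ sites, shifting by a bounded number of indices does not affect the maximum in the limit, and I would absorb this by invoking the uniform bounds of Lemma~\ref{dlam-converge} and Assumption 2.2 on a negligibly enlarged window.
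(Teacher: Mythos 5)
Your proposal is correct and takes essentially the same route as the paper's own proof: both split $\E w_j - \E w_{j-1}$ into the increment $\omega_j-\omega_{j-1}$ (killed by Assumption 2.2) and a second difference of $u_o$, and both exploit the integrality of $\w(t,x)$ together with the $1$-periodicity and smoothness of $u_o$ to reduce that second difference to $O\bigl(\max_i|\omega_i-\w|\bigr)$, which vanishes by Lemma~\ref{dlam-converge}. The only cosmetic difference is that the paper telescopes $\lambda_j \equiv \lambda_0 + \sum_{\ell=1}^{j}(\omega_\ell-\w) \pmod 1$ and differences the accumulated errors, while you apply periodicity one lattice step at a time; both yield the identical bound $\|u_o'\|_\infty\,|\omega_j-\w|$.
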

\begin{proof} Assume for simplicity that $x=0$. We have
$\max_{|j|\leq N\epsilon}|\omega_j - \omega_{j-1}| \to0$ as $\Nepslim$. Let $|j|\leq N\epsilon$. Recall that $\E w_j= \omega_j +  u_o(\lambda_j) - u_o(\lambda_{j-1}),$ where $u_o$ is periodic and smooth. Taking finite differences,
\begin{equation}\begin{split}\label{del-omeg}
\l|\E w_j - \E w_{j-1}\r| \leq |\omega_j - \omega_{j-1}| +2|u_o(\lambda_j) - u_o(\lambda_{j-1})|.\end{split}\end{equation}
Now,  using that $\w:=\w(t,x)\in\Z$, we have
$\lambda_{j} = \lambda_0 + \sum_{\ell=1}^j\omega_\ell =\lambda_0+j\w + \epsilon_j\equiv \lambda_0 + \epsilon_j\mod 1,$ where $\epsilon_j = \sum_{\ell=1}^j\l(\omega_\ell-\w\r)$. Hence for all $|j|\leq N\epsilon$ we have
\begin{equation}\begin{split}
\l|u_o(\lambda_{j}) -u_o(\lambda_{j-1})\r|  &= \l|u_o(\lambda_0 + \epsilon_{j}) - u_o(\lambda_0 + \epsilon_{j-1})\r|\\
&\leq \|u_o'\|_\infty\l|\epsilon_{j}-\epsilon_{j-1}\r| =\|u_o'\|_\infty  |\omega_j-\w|.\end{split}\end{equation}Substituting this bound into~\eqref{del-omeg}, we get
\begin{equation}\l|\E w_j - \E w_{j-1}\r|\leq  |\omega_j-\omega_{j-1}|+2\|u_o'\|_\infty\|\omega_j-\w|.\end{equation} Both summands on the right go to zero uniformly over $|j|\leq N\epsilon$ as $N\to\infty$, $\epsilon\to0$.
\end{proof}

We now turn to the proof of Proposition~\ref{prop:aud}. Recall that $$\mu_K[\omega,\lambda](n) =\e\l(-\frac K2(n-\omega)^2\r)Q(n,\omega,\lambda),$$ where $Q$ is a function bounded away from zero such that $Q(n,\omega,\lambda) = Q(n\bmod2,\omega\bmod2,\lambda\bmod1)$, smooth in the second two arguments. Recall
\beq
\muinfty[\omega](n) =\int_{-\frac12}^{\frac12}\mu_K[\omega, \lambda](n)d\lambda.
\eeq  %First, we have the following 
\begin{lemma}
Let $v_{N,\epsilon}$ converge to $v$ as $N\to\infty$ and then $\epsilon\to0$. There exists $C_{N,\epsilon}=C_{N,\epsilon}(v)>0$ such that
\beq\label{vNeps}\l| \muinfty[v_{N,\epsilon}](n) - \muinfty[v](n)\r| \leq C_{N,\epsilon}e^{-Kn^2/3},\quad\forall n\in\Z,\eeq where 
$\limNeps C_{N,\epsilon}=0$.%\int_{-1/2}^{1/2}\l|\mu[\bar\omega_{N,\epsilon,\Delta}, \lambda](n)-\mu[\omega, \lambda](n)\r|d\lambda \\&\leq 
\end{lemma}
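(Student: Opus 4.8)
The plan is to bound the integrand of the difference $\muinfty[v_{N,\epsilon}](n)-\muinfty[v](n)$ pointwise in $\lambda$, uniformly in $n$, and then integrate over the unit-length $\lambda$-interval, which contributes only a harmless factor. Set $\delta_{N,\epsilon}=v_{N,\epsilon}-v$. Since $v_{N,\epsilon}\to v$ in the iterated limit, for all large $N$ both $v$ and $v_{N,\epsilon}$, and hence the whole segment joining them, lie in the fixed compact interval $[v-1,v+1]$; every constant below may depend on $v$ and $K$ but is independent of $N,\epsilon,n$, and $\lambda$.

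First I would split the integrand using the factorization $\mu_K[\omega,\lambda](n)=\e(-\tfrac K2(n-\omega)^2)Q(n,\omega,\lambda)$:
\beqsn
\mu_K[v_{N,\epsilon},\lambda](n) - \mu_K[v,\lambda](n) &= \l[\e\l(-\tfrac K2(n-v_{N,\epsilon})^2\r) - \e\l(-\tfrac K2(n-v)^2\r)\r]Q(n,v_{N,\epsilon},\lambda) \\
&\quad + \e\l(-\tfrac K2(n-v)^2\r)\l[Q(n,v_{N,\epsilon},\lambda)-Q(n,v,\lambda)\r].
\eeqsn
For the first bracket I would apply the mean value theorem to $\omega\mapsto \e(-\tfrac K2(n-\omega)^2)$, whose derivative is $K(n-\omega)\e(-\tfrac K2(n-\omega)^2)$, thereby bounding it by $|\delta_{N,\epsilon}|\sup_{\xi\in[v-1,v+1]}K|n-\xi|\e(-\tfrac K2(n-\xi)^2)$. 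For the second bracket I would use the properties of $Q$ recalled above: it depends on $n$ only through $n\bmod 2$, is smooth, and is $2$-periodic in $\omega$ and $1$-periodic in $\lambda$; hence $Q$ and $\partial_\omega Q$ are bounded uniformly in all their arguments, and the mean value theorem gives $|Q(n,v_{N,\epsilon},\lambda)-Q(n,v,\lambda)|\le \|\partial_\omega Q\|_\infty|\delta_{N,\epsilon}|$.

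The one genuinely quantitative point --- and the reason the exponent in the statement is $Kn^2/3$ rather than $Kn^2/2$ --- is the elementary tail estimate
\beqn
K|n-\xi|\,\e\l(-\tfrac K2(n-\xi)^2\r) + \e\l(-\tfrac K2(n-v)^2\r) \le C\,\e\l(-\tfrac K3 n^2\r), \quad \forall\, n\in\Z,\ \xi\in[v-1,v+1].
\eeqn
I would prove this by expanding the square, so that each Gaussian carries a cross term of the form $\e(Kn\xi)$, and observing that completing the square gives $-\tfrac K2(n-\xi)^2+\tfrac K3 n^2=-\tfrac K6(n-3\xi)^2+K\xi^2$, which is bounded above uniformly for $\xi$ in the compact set; the linear prefactor $K|n-\xi|$ is then dominated by the residual Gaussian decay in $n$. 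Combining the three bounds, integrating over $\lambda\in[-\tfrac12,\tfrac12]$, and setting $C_{N,\epsilon}:=C\,(1+\|\partial_\omega Q\|_\infty)\,|\delta_{N,\epsilon}|$ yields~\eqref{vNeps}, with $C_{N,\epsilon}\to0$ as $\Nepslim$ since $|\delta_{N,\epsilon}|\to0$. I expect this last tail estimate to be the only step requiring care; the remainder is bookkeeping resting on the smoothness and periodicity of $Q$ established in Section~\ref{subsec:arr-prop}.
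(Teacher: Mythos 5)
Your proposal is correct and follows essentially the same route as the paper's proof: the same splitting of $\mu_K[w,\lambda](n)-\mu_K[w',\lambda](n)$ into a Gaussian-difference term times $Q$ plus a Gaussian times a $Q$-difference term, bounded via the mean value theorem and the uniform bounds on $Q$ and $\partial_2 Q$ coming from smoothness and periodicity, then integrated over the unit $\lambda$-interval with $C_{N,\epsilon}\propto|v_{N,\epsilon}-v|$. The only difference is that you spell out the tail estimate yielding the exponent $Kn^2/3$ (your completed-square identity $-\tfrac K2(n-\xi)^2+\tfrac K3 n^2=-\tfrac K6(n-3\xi)^2+K\xi^2$ checks out), whereas the paper dismisses this step as ``basic calculus.''
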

\begin{proof}
We use the following bound: if $|w|,|w'|<R$, then  
\beq\label{app:ineq}\l|\mu[w,\lambda](n) - \mu[w',\lambda](n)\r| \leq C(R)|w-w'|e^{-\frac K3n^2}\eeq for some $C(R)$ and for all $n\in\Z$. Indeed, using that $Q$ is periodic in all variables and smooth in the second and third variable, we have that $\|Q\|_\infty$ and $\|\partial_2Q\|_\infty$ are finite. Hence
\beqs\label{temp}
\l|\mu[w,\lambda](n) - \mu[w',\lambda](n)\r| \leq &\|Q\|_\infty\l|e^{-\frac K2(n-w)^2}-e^{-\frac K2(n-w')^2}\r| \\
&+ e^{-\frac K2(n-w)^2}|w-w'| \|\partial_2Q\|_\infty
\eeqs From here, basic calculus gives~\eqref{app:ineq}. Integrating this bound over $\lambda\in[-1/2,1/2]$ and substituting $w=v_{N,\epsilon}$, $w'=v$, gives~\eqref{vNeps} with $C_{N,\epsilon} = C(2v) |v_{N,\epsilon}-v|.$ \end{proof}
Using this lemma with $v_{N,\epsilon}=\E\ol{\bf w}_{\idxsetx }$ and $v=\w$,  Proposition~\ref{prop:aud}  simplifies to showing the following:
\begin{proposition}\label{app:prop:aud} Let $\omega_i=\omega_i^N, \,|i|\leq N\epsilon$ satisfy 
\begin{alignat}{2}\label{omega-bds}
\ol{\lim_{\epsilon\to0}}\;\ol{\lim_{N\to\infty}}&\max_{|i|\leq N\epsilon}N|\omega_i-\omega_{i-1}| < C<\infty,\quad&\text{(Assumption 2.2)}& \nonumber\\
\ol{\lim_{\epsilon\to0}}\;\ol{\lim_{N\to\infty}}&\max_{|i|\leq N\epsilon}|\omega_i-\w|=0,\quad&\text{(Lemma~\ref{dlam-converge})},&
%\ol{\lim_{\epsilon\to0}}\;\ol{\lim_{N\to\infty}}&|\E\ol{ w_{N,\epsilon}}-\w|=0,\quad&\text{(Assumption 2.3)}&.\nonumber
\end{alignat} for some $\w$ irrational. Let $\lambda_i$ be some numbers such that $\omega_i = \lambda_i-\lambda_{i-1}$. Then  
\beq\label{app:prop:bd}
\l|\frac{1}{2N\epsilon}\sum_{|i|\leq N\epsilon}\mu_K[\omega_i,\lambda_i](n) - \muinfty[\w](n)\r| \leq C_{N,\epsilon}\e\l(-Kn^2/3\r)\eeq for all $n\in\Z$, where $C_{N,\epsilon}\to0$ as $\Nepslim$.  
\end{proposition}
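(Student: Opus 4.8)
The plan is to separate the two mechanisms hidden in \eqref{app:prop:bd}: the replacement of the varying increments $\omega_i$ by their common limit $\w$, and the equidistribution of the phases $\lambda_i\bmod1$. Since $\mu_K[\omega,\lambda](n)=e^{-\frac K2(n-\omega)^2}Q(n,\omega,\lambda)$ with $Q$ bounded and Lipschitz in its second argument uniformly in $n,\lambda$ (this is exactly what underlies \eqref{app:ineq}), I would first replace every $\omega_i$ by $\w$. Writing $\eta_{N,\epsilon}:=\max_{|i|\le N\epsilon}|\omega_i-\w|$, inequality \eqref{app:ineq} gives
\[
\Big|\frac{1}{2N\epsilon}\sum_{|i|\le N\epsilon}\mu_K[\omega_i,\lambda_i](n)-\frac{1}{2N\epsilon}\sum_{|i|\le N\epsilon}\mu_K[\w,\lambda_i](n)\Big|\le C(R)\,\eta_{N,\epsilon}\,e^{-Kn^2/3},
\]
and $\eta_{N,\epsilon}\to0$ in the iterated limit by Lemma~\ref{dlam-converge}. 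It then remains to compare $\frac{1}{2N\epsilon}\sum_{|i|\le N\epsilon}\mu_K[\w,\lambda_i](n)$ with $\muinfty[\w](n)=\int_0^1\mu_K[\w,\lambda](n)\,d\lambda$ (the integral over $[-\tfrac12,\tfrac12]$ equals that over $[0,1]$ by $1$-periodicity of $Q(n,\w,\cdot)$). Pulling out the common Gaussian factor, which is independent of $\lambda$, and bounding $e^{-\frac K2(n-\w)^2}\le C(\w)e^{-Kn^2/3}$, this reduces to the purely $\lambda$-dependent estimate
\[
\Big|\frac{1}{2N\epsilon}\sum_{|i|\le N\epsilon}Q(n,\w,\lambda_i)-\int_0^1 Q(n,\w,\lambda)\,d\lambda\Big|\le\tilde C_{N,\epsilon},
\]
uniformly in $n$, with $\tilde C_{N,\epsilon}\to0$.

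Next I would read the left-hand side as testing the empirical measure $P^{N,\epsilon}:=\frac{1}{2N\epsilon}\sum_{|i|\le N\epsilon}\delta_{\lambda_i\bmod1}$ against $\lambda\mapsto Q(n,\w,\lambda)$. Because $Q$ is $1$-periodic in $\lambda$ and, by its periodicity in $n$, depends on $n$ only through $n\bmod2$, there are only \emph{two} distinct test profiles (even and odd $n$), each smooth and of bounded variation; so it suffices to show $P^{N,\epsilon}\to$ Lebesgue measure with a rate that is uniform over these two profiles. I would control the star-discrepancy of $P^{N,\epsilon}$ via the Erd\H{o}s--Tur\'an inequality \eqref{erdos-turan}, which bounds it by $C\big(\tfrac1m+\sum_{k=1}^m|\widehat{P^{N,\epsilon}}(k)|/k\big)$ for every $m$, where $\widehat{P^{N,\epsilon}}(k)=\frac{1}{2N\epsilon}\sum_{|i|\le N\epsilon}e^{2\pi\mathrm{i}k\lambda_i}$.

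The crux is the decay of $\widehat{P^{N,\epsilon}}(k)$, and here I would exploit the slow variation of the increments rather than their pointwise smallness. Writing $\lambda_i=\lambda_0+i\w+\delta_i$ with $\delta_i-\delta_{i-1}=\omega_i-\w$, the geometric factor $e^{2\pi\mathrm{i}k\w i}$ has Dirichlet-kernel partial sums bounded by $1/|\sin\pi k\w|$, while the correction $e^{2\pi\mathrm{i}k\delta_i}$ has total variation at most $2\pi k\sum_{|i|\le N\epsilon}|\omega_i-\w|\le2\pi k(2N\epsilon+1)\eta_{N,\epsilon}$. Abel summation then yields
\[
|\widehat{P^{N,\epsilon}}(k)|\le\frac{1}{(2N\epsilon+1)|\sin\pi k\w|}\Big(1+2\pi k(2N\epsilon+1)\,\eta_{N,\epsilon}\Big),
\]
so that for each fixed $k$, $\limsup_{\epsilon\to0}\limsup_{N\to\infty}|\widehat{P^{N,\epsilon}}(k)|=0$: the first term vanishes as $N\to\infty$, and the second is $\le2\pi k|\sin\pi k\w|^{-1}\eta_{N,\epsilon}\to0$. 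I expect this step to be the main obstacle, precisely because $\delta_i$ itself is of size $O(N\epsilon^2)$ and cannot be discarded; the summation by parts is what lets Assumption~2.2 (bounded total variation of the $\omega_i$) do the work instead. Irrationality of $\w$ enters exactly through $\sin\pi k\w\neq0$ for all $k\ge1$.

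Finally I would assemble the estimate while avoiding any Diophantine hypothesis on $\w$: for each \emph{fixed} $m$ the sum $\sum_{k=1}^m|\widehat{P^{N,\epsilon}}(k)|/k$ is finite with iterated limit $0$, so \eqref{erdos-turan} gives $\limsup_{\epsilon\to0}\limsup_{N\to\infty}D^*_{N,\epsilon}\le C/m$; letting $m\to\infty$ forces the discrepancy $D^*_{N,\epsilon}\to0$. Transferring this to the two bounded-variation profiles $Q(n,\w,\cdot)$, either by Koksma's inequality or directly since their Fourier series converge absolutely, gives $\tilde C_{N,\epsilon}\to0$ uniformly in $n$. Combining with the Gaussian factor and the $\omega_i\mapsto\w$ replacement error yields \eqref{app:prop:bd} with $C_{N,\epsilon}=C(R)\eta_{N,\epsilon}+C(\w)\tilde C_{N,\epsilon}\to0$, as required.
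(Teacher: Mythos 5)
Your proposal is correct, and while it follows the paper's outer structure (replace each $\omega_i$ by $\w$ via the Lipschitz bound \eqref{app:ineq}, factor out the Gaussian, reduce to equidistribution of $\lambda_i\bmod 1$ tested against the two parity profiles of $Q$, and assemble via Erd\H{o}s--Tur\'an with a fixed cutoff sent to infinity at the end), the heart of the argument --- the bound on the Fourier coefficients of the empirical measure $P^{N,\epsilon}$ --- is genuinely different. The paper splits $\{|i|\le N\epsilon\}$ into blocks of length $N^{1/3}\epsilon$, linearizes the phases locally as $\lambda_{i_k+j}=\lambda_{i_k}+j\omega_{i_k}+\kappa_{j,k}$ with $|\kappa_{j,k}|\le N^{2/3}\epsilon^2\max_i|\omega_i-\omega_{i-1}|$ (this is where Assumption 2.2 enters), and then sums a geometric series in each block; because the block frequencies $\omega_{i_k}$ vary, it must keep all $m\omega_{i_k}$ uniformly away from the integers, which forces the Diophantine-type quantity $\delta(n)=\min_{q\le n}\min_{p}\l|(\w\bmod 1)-p/q\r|$ into the argument. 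Your global decomposition $\lambda_i=\lambda_0+i\w+\delta_i$ with Abel summation avoids both devices: the Dirichlet partial sums of $e^{2\pi\mathrm{i}k\w i}$ need only $\sin(\pi k\w)\neq0$, i.e.\ bare irrationality of the single number $\w$, and the total variation of the correction $e^{2\pi\mathrm{i}k\delta_i}$, of order $(2N\epsilon+1)\eta_{N,\epsilon}$, is exactly cancelled by the $1/(2N\epsilon)$ normalization, so no blocking is needed. (The transfer from discrepancy to the smooth profiles $Q(n,\w,\cdot)$ --- your Koksma route versus the paper's step-function approximation --- is an inessential difference.) One correction to your own commentary: your total-variation bound $\sum_i|\omega_i-\w|\le(2N\epsilon+1)\eta_{N,\epsilon}$ rests on the uniform convergence $\eta_{N,\epsilon}=\max_i|\omega_i-\w|\to0$ from Lemma~\ref{dlam-converge}, \emph{not} on Assumption 2.2 as you state; in fact your proof never uses the increment bound $\ol{\lim}\;\ol{\lim}\max_i N|\omega_i-\omega_{i-1}|<C$ at all, so your argument establishes the proposition under strictly weaker hypotheses than the paper's.
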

\begin{proof} Using the bound~\eqref{app:ineq}, we get that
\beq
\l|\frac{1}{2N\epsilon}\sum_{|i|\leq N\epsilon}\mu_K[\omega_i,\lambda_i](n) - \frac{1}{2N\epsilon}\sum_{|i|\leq N\epsilon}\mu_K[\w,\lambda_i](n)\r| \leq C_{N,\epsilon}\e\l(-Kn^2/3\r),\eeq where $C_{N,\epsilon}=C(\w)\max_{|i|\leq N\epsilon}|\omega_i-\w|$. The proof then follows from
\end{proof}
\begin{lemma}\label{app:lma:aud}
Let $\omega_i$, $\lambda_i$, and $\w$ be as in Proposition~\ref{app:prop:aud}. Then
\beq\label{app:lma:bd}
\l|\frac{1}{2N\epsilon}\sum_{|i|\leq N\epsilon}\mu_K[\w,\lambda_i](n) - \muinfty[\w](n)\r|\leq C_{N,\epsilon}\e\l(-Kn^2/3\r),\quad\forall n\in\Z,
\eeq where $C_{N,\epsilon}\to0$ as $\Nepslim$.  
\end{lemma}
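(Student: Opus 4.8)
The plan is to prove the stated bound~\eqref{app:lma:bd} by showing that the points $\lambda_j\bmod 1$, $|j|\leq N\epsilon$, become equidistributed on the unit interval as $\Nepslim$, with a quantitative control on the discrepancy. First I would factor the $\lambda$-independent Gaussian out of $\mu_K[\w,\lambda](n)=\e\l(-\frac K2(n-\w)^2\r)Q(n,\w,\lambda)$, which reduces the left-hand side of~\eqref{app:lma:bd} to
$$\e\l(-\frac K2(n-\w)^2\r)\,\l|\frac{1}{2N\epsilon}\sum_{|j|\leq N\epsilon}Q(n,\w,\lambda_j)-\int_0^1 Q(n,\w,\lambda)\,d\lambda\r|,$$
using that $Q$ is $1$-periodic in its third argument, so the integral defining $\muinfty[\w](n)$ over $[-1/2,1/2]$ equals the integral over $[0,1]$. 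Introducing the empirical measure $P^{N,\epsilon}=\frac{1}{2N\epsilon}\sum_{|j|\leq N\epsilon}\delta_{\lambda_j\bmod 1}$, Koksma's inequality bounds the bracketed difference by $V(Q(n,\w,\cdot))\cdot D(P^{N,\epsilon})$, where $D$ is the discrepancy and $V$ the total variation in $\lambda$. Since $Q$ depends on $n$ only through $n\bmod 2$ and is smooth and periodic in $\lambda$, the variation $V(Q(n,\w,\cdot))$ is bounded by a constant $V_0$ \emph{uniform in} $n$. Finally, because $\frac12(n-\w)^2\geq\frac13 n^2$ for $|n|$ large, the prefactor obeys $\e\l(-\frac K2(n-\w)^2\r)\leq C(\w)\e\l(-Kn^2/3\r)$ for all $n$; collecting these gives~\eqref{app:lma:bd} with $C_{N,\epsilon}=C(\w)V_0\,D(P^{N,\epsilon})$, so it remains only to show $D(P^{N,\epsilon})\to 0$ as $\Nepslim$.

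For the discrepancy I would invoke the Erd\H os--Tur\'an inequality~\eqref{erdos-turan}, which reduces the problem to showing that each Fourier coefficient $\hat P^{N,\epsilon}(m)=\frac{1}{2N\epsilon}\sum_{|j|\leq N\epsilon}\e\l(2\pi i m\lambda_j\r)$ tends to zero. Writing $\lambda_j=\lambda_0+j\w+\epsilon_j$ with $\epsilon_j:=\lambda_j-\lambda_0-j\w$ (so that $\epsilon_j-\epsilon_{j-1}=\omega_j-\w$), the main obstacle is that, unlike in a genuine arithmetic progression, the cumulative drift $\epsilon_j$ need not be small: it can grow like $N\epsilon\cdot\max_{|j|\leq N\epsilon}|\omega_j-\w|$, which does not vanish for fixed $\epsilon$, so a crude replacement $\lambda_j\approx\lambda_0+j\w$ is illegitimate. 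The observation that rescues the argument is that $\epsilon_j$ varies \emph{slowly}, since $|\epsilon_j-\epsilon_{j-1}|=|\omega_j-\w|$ is uniformly small. I would therefore estimate the Weyl sum by summation by parts against the partial sums $B_\ell=\sum_{j=-\lfloor N\epsilon\rfloor}^{\ell}\e\l(2\pi i m j\w\r)$, which—because $\w$ is irrational, so $m\w\notin\Z$ for each fixed $m$—are bounded by $1/|\sin(\pi m\w)|$ uniformly in $\ell$. Abel summation then yields
$$\l|\hat P^{N,\epsilon}(m)\r|\leq \frac{1}{2N\epsilon\,|\sin(\pi m\w)|}+\frac{2\pi m}{|\sin(\pi m\w)|}\max_{|j|\leq N\epsilon}|\omega_j-\w|.$$
Taking $N\to\infty$ kills the first term, and then $\epsilon\to 0$ kills the second by the uniform convergence $\omega_j\to\w$ from Lemma~\ref{dlam-converge}; hence $\ol{\lim_{\epsilon\to0}}\;\ol{\lim_{N\to\infty}}\l|\hat P^{N,\epsilon}(m)\r|=0$ for every fixed $m$. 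This is exactly where irrationality of $\w$ enters, and why summation by parts, rather than the crude replacement, is the essential technical step.

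To conclude I would combine the two ingredients via the standard Weyl-criterion ordering of limits: for each fixed truncation level $n_\ast$ the Erd\H os--Tur\'an bound gives $D(P^{N,\epsilon})\leq C\l(1/n_\ast+\sum_{m=1}^{n_\ast}|\hat P^{N,\epsilon}(m)|/m\r)$, and since the sum is finite, applying $\ol{\lim_{\epsilon\to0}}\;\ol{\lim_{N\to\infty}}$ annihilates every Fourier term and leaves $\ol{\lim_{\epsilon\to0}}\;\ol{\lim_{N\to\infty}}D(P^{N,\epsilon})\leq C/n_\ast$. Letting $n_\ast\to\infty$ gives $D(P^{N,\epsilon})\to 0$, and therefore $C_{N,\epsilon}\to 0$, completing the proof. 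The main difficulty, as flagged above, is the Weyl-sum estimate for the perturbed (non-arithmetic) sequence $\lambda_j$; the remaining steps are routine bookkeeping with the Gaussian tail and the uniform variation bound on $Q$.
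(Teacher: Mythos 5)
Your proof is correct, and while its outer scaffolding coincides with the paper's (factoring out the Gaussian via $e^{-\frac K2(n-\w)^2}\leq C(\w)e^{-Kn^2/3}$, passing to the empirical measure $P^{N,\epsilon}$ of the points $\lambda_i\bmod 1$, and reducing to Fourier coefficients through Erd\H{o}s--Tur\'an), your estimate of the Weyl sums $\hat P^{N,\epsilon}(m)$ takes a genuinely different route. The paper's Lemma~\ref{lma:erdos} partitions the window into blocks of $N^{1/3}\epsilon$ consecutive indices, freezes the frequency at $\omega_{i_k}$ within each block, controls the intra-block phase drift $\kappa_{j,k}$ by the discrete-derivative bound of Assumption 2.2 (so that $B_{N,\epsilon}=N^{2/3}\epsilon^2\max_i|\omega_{i+1}-\omega_i|\to0$), and evaluates each block by the geometric series formula; to make those geometric sums uniformly bounded it introduces the rational-distance quantity $\delta(n)=\min_{q\leq n}\min_{p}|(\w\bmod 1)-p/q|$ and uses the uniform convergence $\omega_{i_k}\to\w$ to guarantee $|(m\omega_{i_k})\bmod 1|\geq m\delta(n)/2$. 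Your Abel summation against the exact progression $e^{2\pi imj\w}$ replaces all of this machinery: the partial-sum bound $1/|\sin(\pi m\w)|$ needs only bare irrationality of $\w$ at each fixed $m$, and the telescoping identity $|\epsilon_j-\epsilon_{j-1}|=|\omega_j-\w|$ converts the (non-vanishing) cumulative drift into uniformly small increments, yielding your explicit bound whose two terms die under $N\to\infty$ and then $\epsilon\to0$ via Lemma~\ref{dlam-converge}. Notably, your argument never invokes Assumption 2.2 --- uniform convergence $\max_{|j|\leq N\epsilon}|\omega_j-\w|\to0$ suffices --- so it proves the lemma under strictly weaker hypotheses, and with an explicit rate in terms of $\max_j|\omega_j-\w|$ and $(N\epsilon)^{-1}$, whereas the paper's blocking spends the smoothness assumption to localize the frequency without gaining anything the Abel bound does not already deliver. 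Your use of Koksma's inequality in place of the paper's step-function approximation of $Q$ is an equivalent substitution (the variation of $Q(n,\w,\cdot)$ is uniform in $n$ since $Q$ depends on $n$ only through its parity, and star- and interval-discrepancy are comparable), and the remaining bookkeeping --- the $[0,1]$ versus $[-\frac12,\frac12]$ normalization of $\muinfty[\w]$, the Gaussian tail absorption, and the truncation-then-limit ordering in Erd\H{o}s--Tur\'an --- is handled exactly as in the paper.
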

%\begin{lemma}\label{ineq}Suppose $|w|,|w'|<R$ fixed. We have the inequality 
%\beq\label{mu-diff-bd}\l|\mu[w,\lambda](n) - \mu[w',\lambda](n)\r| \leq C(R)|w-w'|e^{-\frac K3n^2}\eeq for some $C(R)$.%Note that $Q$ is smooth in $\delta\lambda$.
%\end{lemma}
\begin{proof}[Proof of Lemma~\ref{app:lma:aud}] 
First, we have
\beqs\bigg|\frac{1}{2N\epsilon}\sum_{|i|\leq N\epsilon}&\mu_K[\w,\lambda_i](n) - \muinfty[\w](n)\bigg| \\&= e^{-\frac K2(n-\w)^2}\l|\frac{1}{2N\epsilon}\sum_{|i|\leq N\epsilon}Q(n,\w,\lambda_i) - \int_{0}^{1}Q(n,\w,\lambda)d\lambda\r|,\eeqs
and there exists some $C=C(\w)$ such that $e^{-\frac K2(n-\w)^2} \leq Ce^{-\frac K3n^2}$ for all $n\in\Z$. Hence it remains to show the term inside the absolute values goes to zero as $\Nepslim$ for $n=0$ and $n=1$ (since $Q$ depends on $n$ through its parity only).
%$$\lim_{\Delta\to0}\ol{\lim_{\epsilon\to0}}\;\ol{\lim_{N\to\infty}}\l|\frac{1}{2N\epsilon}\sum_{|i|\leq N\epsilon}Q(n,\omega,\lambda_i) - \int_0^1Q(n,\omega,\lambda)d\lambda\r| = 0,\quad n=0,1,$$ since $Q$ depends on $n$ through its parity only. 
Define the measure 
$$P^{N,\epsilon} = \frac{1}{2N\epsilon}\sum_{|i|\leq N\epsilon}\delta_{\lambda_i\bmod1}.$$ Let $Q(\lambda) = Q(n,\omega,\lambda)/\|Q\|_\infty$, $n=0,1$. (The proof does not depend on $n$ or $\w$.) We have
$$\l|\sum_{|i|\leq N\epsilon}\frac{Q(n,\omega,\lambda_i)}{2N\epsilon} - \int_0^1Q(n,\omega,\lambda)d\lambda\r|=\|Q\|_\infty\l|\int Q(\lambda)P^{N,\epsilon}(d\lambda) - \int Q(\lambda)d\lambda\r|.$$ % = \|\Lambda^{N,\epsilon} - \mathrm{Leb.}\|_{\mathrm{TV}}.$$
We will show that for each step function $g$ on $[0,1]$, with $\|g\|_\infty\leq 1$, we have \beq\label{step-fxn}\ol{\lim_{\epsilon\to0}}\;\ol{\lim_{N\to\infty}}\l|\int g(\lambda)P^{N,\epsilon}(d\lambda) - \int g(\lambda)d\lambda\r|=0.\eeq Since $Q$ is continuous, for each $\delta>0$ we can find a step function $g$ with $\|g\|_\infty\leq 1$ such that $\|g-Q\|_\infty \leq\delta$, and hence 
\beqs
\bigg|\l|\int g(\lambda)P^{N,\epsilon}(d\lambda) - \int g(\lambda)d\lambda\r| &- \l|\int Q(\lambda)P^{N,\epsilon}(d\lambda) - \int Q(\lambda)d\lambda\r|\bigg|\\&\leq \int|Q-g|P^{N,\epsilon}(d\lambda) + \int|Q-g|d\lambda \leq 2\delta. \eeqs Then using~\eqref{step-fxn}, it follows that
$$\ol{\lim_{\epsilon\to0}}\;\ol{\lim_{N\to\infty}}\l|\int Q(\lambda)P^{N,\epsilon}(d\lambda) - \int Q(\lambda)d\lambda\r|\leq 2\delta,$$ and since $\delta>0$ is arbitrary, the limit is zero. 

Returning to~\eqref{step-fxn}, let $g(\lambda) = \sum_{k=1}^mc_k\mathbbm{1}_{I_k}(\lambda)$ for intervals $I_k$. We have
\beqs
\l|\int g(\lambda)P^{N,\epsilon}(d\lambda) - \int g(\lambda)d\lambda\r| &\leq m\sup_{k=1,\dots, m}|P^{N,\epsilon}(I_k) - |I_k|| \\&\leq m\sup_A|P^{N,\epsilon}(A) - |A||,\eeqs where the supremum is over all intervals $A\subset [0,1]$. Thus it suffices to show
$$\ol{\lim_{\epsilon\to0}}\;\ol{\lim_{N\to\infty}}\sup_A|P^{N,\epsilon}(A) - |A||=0.$$ For the proof of this statement, see Lemma~\ref{lma:erdos}
\end{proof}

We use the Erd\H{o}s-Tur\'{a}n inequality~\cite{erdos}:
\beq\label{app:erdos-turan}
\sup_A|\nu(A) - |A|| \leq C\l(\frac1n + \sum_{m=1}^n\frac{|\hat\nu(m)|}{m}\r)
\eeq
for an absolute constant $C$ and any $n=1,2,\dots$, where $\nu$ is a measure on $[0,1]$ and $\hat\nu(m) = \int \e\l(2\pi i mx\r)\nu(dx).$
\begin{lemma}\label{lma:erdos}
If $\omega\in\mathbb Q^C$, then $\sup_A|P^{N,\epsilon}(A) - |A||\to0$ as $N\to\infty,\epsilon\to0$.
\end{lemma}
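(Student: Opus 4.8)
The plan is to invoke the Erd\H{o}s-Tur\'an inequality~\eqref{app:erdos-turan} and reduce the claim to the vanishing of the Fourier coefficients of $P^{N,\epsilon}$ in the double limit. Writing the summation index as $j$ and letting $i$ denote the imaginary unit, the $m$-th coefficient is
\[
\hat P^{N,\epsilon}(m) = \frac{1}{2N\epsilon}\sum_{|j|\leq N\epsilon} e^{2\pi i m\lambda_j},
\]
since $e^{2\pi i m(\lambda_j\bmod 1)} = e^{2\pi i m\lambda_j}$ for integer $m$. For any fixed $n$, the right-hand side of~\eqref{app:erdos-turan} is $C/n + C\sum_{m=1}^n |\hat P^{N,\epsilon}(m)|/m$, a \emph{finite} sum. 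Hence it suffices to prove that for each fixed integer $m\geq 1$ we have $\hat P^{N,\epsilon}(m)\to 0$ as $\Nepslim$: taking $\ol{\lim_{\epsilon\to0}}\,\ol{\lim_{N\to\infty}}$ of~\eqref{app:erdos-turan} then bounds the double-limsup of $\sup_A|P^{N,\epsilon}(A)-|A||$ by $C/n$, and since $n$ is arbitrary the limsup is $0$.

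The core is a Weyl-type estimate on an exponential sum with slowly varying spacing. Put $M=\lfloor N\epsilon\rfloor$, $a_j = e^{2\pi i m\lambda_j}$, and $z = e^{2\pi i m\omega}$. Because $\omega$ is irrational and $m\geq 1$, we have $m\omega\notin\Z$, so $z\neq 1$ and $|1-z|>0$. Since $\lambda_j-\lambda_{j-1}=\omega_j$, we have $a_j = a_{j-1}e^{2\pi i m\omega_j}$, whence
\[
|a_j - z a_{j-1}| = \bigl|e^{2\pi i m\omega_j} - e^{2\pi i m\omega}\bigr| \leq 2\pi m\,|\omega_j-\omega| \leq 2\pi m\,\eta,
\]
where $\eta := \max_{|j|\leq M}|\omega_j-\omega|\to 0$ as $\Nepslim$ by Lemma~\ref{dlam-converge}. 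Summing the Abel identity
\[
(1-z)\sum_{|j|\leq M} a_j = \sum_{j=-M+1}^{M}(a_j - z a_{j-1}) + a_{-M} - z a_M,
\]
and using $|a_j|=|z|=1$, the $2M$ telescoped terms give
\[
\Bigl|\sum_{|j|\leq M}a_j\Bigr| \leq \frac{2M\cdot 2\pi m\,\eta + 2}{|1-z|}.
\]
Dividing by $2N\epsilon$ and using $M\leq N\epsilon$, we obtain $|\hat P^{N,\epsilon}(m)| \leq \frac{2\pi m\,\eta}{|1-z|} + \frac{1}{N\epsilon\,|1-z|}$, which tends to $0$ for each fixed $m$ since $\eta\to0$ and $N\epsilon\to\infty$ while $|1-z|$ is a fixed positive constant. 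This yields $\hat P^{N,\epsilon}(m)\to 0$ and closes the argument.

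The one point needing care is that the spacings $\omega_j$ are \emph{not} constant, so the sum is not a plain geometric series and the textbook equidistribution computation does not apply verbatim; this is the main obstacle, and the difference (Abel) trick is precisely what circumvents it, using only the uniform convergence $\omega_j\to\omega$ — the conclusion of Lemma~\ref{dlam-converge}, into which Assumption 2.2 feeds upstream. Crucially, the factor $2M$ coming from the number of telescoped terms cancels against the normalization $2N\epsilon$, so smallness of $\eta$ alone suffices and no rate on $\eta$ is needed. The irrationality of $\omega$ enters exactly once, guaranteeing $|1-e^{2\pi i m\omega}|>0$ for every $m$; over the finite range $1\leq m\leq n$ it is bounded below by a positive constant, which is exactly why it is legitimate to send $N\to\infty,\,\epsilon\to 0$ before $n\to\infty$.
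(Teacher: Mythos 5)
Your proof is correct, and it takes a genuinely different route from the paper's. The paper's proof partitions the window $\{|i|\leq N\epsilon\}$ into blocks of length $N^{1/3}\epsilon$, approximates $\lambda_{i_k+j}$ within each block by the arithmetic progression $\lambda_{i_k}+j\omega_{i_k}$ with drift error $B_{N,\epsilon}=N^{2/3}\epsilon^2\max_i|\omega_{i+1}-\omega_i|$ controlled by the first bound in~\eqref{omega-bds} (Assumption 2.2), and then evaluates each block by the closed-form geometric sum, bounding the denominator through the Diophantine quantity $\delta(n)=\min_{q\leq n}\min_{p}|(\w\bmod 1)-p/q|$ together with the uniform proximity of the $\omega_{i_k}$ to $\w$. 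Your Abel-summation identity replaces the blocking entirely: writing $(1-z)\sum_j a_j$ as a telescoped sum of $2M$ terms, each of modulus at most $2\pi m|\omega_j-\w|$, plus an $O(1)$ boundary term, the factor $2M$ cancels against the normalization $2N\epsilon$, so the single hypothesis $\eta=\max_{|j|\leq N\epsilon}|\omega_j-\w|\to0$ (the conclusion of Lemma~\ref{dlam-converge}, i.e.\ only the second bound in~\eqref{omega-bds}) suffices, and you never invoke Assumption 2.2 directly. Within Proposition~\ref{app:prop:aud} both hypotheses are available, so consuming only one makes your argument strictly leaner, and it yields the transparent bound $|\hat P^{N,\epsilon}(m)|\leq 2\pi m\,\eta/|1-z| + (N\epsilon\,|1-z|)^{-1}$. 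What the paper's heavier machinery buys is a different quantitative profile: the blockwise geometric sums use the smallness of $\eta$ only as a threshold ($\eta<\delta(n)/2$) rather than linearly, with the rate carried instead by $C(n)N^{-1/3}\epsilon^{-1}$ and the drift term, which can be sharper when $\omega_j\to\w$ uniformly but slowly while $N|\omega_{j+1}-\omega_j|$ stays bounded; for the purely qualitative double limit asserted in the lemma, the two are equally adequate. Two minor points you should make explicit, as the paper does: the off-by-one between the $2M+1$ summands and the $2N\epsilon$ normalization is harmless but worth a sentence, and the lower bound $\min_{1\leq m\leq n}|1-e^{2\pi i m\w}|>0$ over the finite Fourier range --- which you correctly identify as the sole entry point of irrationality --- is exactly what licenses taking $\Nepslim$ before $n\to\infty$ in the Erd\H{o}s--Tur\'an bound~\eqref{app:erdos-turan}.
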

\begin{proof}
We assume for simplicity that $N\epsilon$, $N^{1/3}\epsilon$, and $N^{2/3}$ are all integers. We also assume that $P^{N,\epsilon}$ is actually the average of the point masses $\lambda_i\bmod 1$ over $i=\{1-N\epsilon, \dots, N\epsilon\}$ (omitting $i=-N\epsilon$). The error these assumptions occur is negligible in the limit. Now, break up the indices $\{1-N\epsilon, \dots, N\epsilon\}$ into subsets of $N^{1/3}\epsilon$ consecutive indices.  We label the subsets $I_k$, $|k|\leq N^{2/3}$, with
$$I_k = \{i_k+1,\dots, i_k+N^{1/3}\epsilon\},\quad i_k=kN^{1/3}\epsilon.$$
%The $k$th subset $I_k$ centered at $i_k=2kN^\frac13\epsilon$
%$$I_k=\{ (2k-1)N^\frac13\epsilon,\dots, (2k+1)N^\frac13\epsilon\},\quad |k|\leq \frac12N^\frac23.$$ 
We use the subintervals $I_k$ to write $\hat P^{N,\epsilon}(m)$. 
\beq\label{Phat}
\hat P^{N,\epsilon}(m) = \frac{1}{2N\epsilon}\sum_{|k|\leq N^{2/3}}\sum_{j=1}^{N^{1/3}\epsilon}\e\l(2\pi im\lambda_{i_k+j}\r)
\eeq
%\beq\label{Phat}
%\hat P^{N,\epsilon}(m) = \frac{E_{-2\pi im\lambda^{-N\epsilon}}}{}\frac{1}{2N\epsilon}\sum_{|k|\leq \frac12N^\frac23}\sum_{|j|\leq N^\frac13\epsilon}\e\l(2\pi im\lambda_{i_k+j}\r)
%\eeq
Now, define $\|\delta(\bm{\omega})\|_\infty = \max_{|i|\leq N\epsilon}|\omega_i - \omega_{i-1}|$. We have
$$ \l|\omega_{i_k+\ell} - \omega_{i_k}\r|\leq \ell\|\delta(\bm{\omega})\|_\infty,\quad |k|\leq N^{2/3},\;\ell=1,\dots, N^{1/3}\epsilon,\vspace{-0.2cm}$$ so that% for all $\epsilon$ less than some $\epsilon_0'$ and $N$ greater than some $N_\epsilon'$. Hence %Recall from~\eqref{barz-termsof-barw} that we have
\beq\label{barz-barw-eps}
\lambda_{i_k+j}= \lambda_{i_k} + \sum_{\ell=1}^j\omega_{i_k+\ell} = \lambda_{i_k} + j\omega_{i_k} + \kappa_{j,k},\eeq where
%%\label{eps-jk}
\beq\label{eps-jk}|\kappa_{j,k}|\leq \sum_{\ell=1}^j\ell\|\delta(\bm{\omega})\|_\infty  \leq B_{N,\epsilon}:=N^{2/3}\epsilon^2\|\delta(\bm{\omega})\|_\infty\eeq for all $|k|\leq N^{2/3}$ and $j=1,\dots, N^{1/3}\epsilon$. %Let $\theta^{m,k}=2\pi m\delta\lambda_{i_k}$ (note that $\theta^{m,k}$ also depends on $N$). Using~\eqref{barz-barw-eps}, we have
%$$\e\l(2\pi i m\lambda_{i_k+j}\r) = \e\l(2\pi im\lambda_{i_k}\r)\e\l(2\pi imj\delta\lambda_{i_k}\r)\e\l(2\pi i m\epsilon_{j,k}\r).$$ 
Let \beq\label{delta}\kappa_{m,N,\epsilon} =  \frac{1}{2N\epsilon}\sum_{|k|\leq N^\frac23}\sum_{j=1}^{N^{1/3}\epsilon}\l|\e\l(2\pi im\kappa_{j,k}\r)-1\r|.\eeq Using~\eqref{eps-jk}, we have $|\kappa_{m,N,\epsilon}| \leq \e\l(2\pi mB_{N,\epsilon}\r)-1.$ Now, substituting~\eqref{barz-barw-eps} into~\eqref{Phat} and using the definition~\eqref{delta} of $\kappa_{m,N,\epsilon}$, we get
\beqs\label{P-est}
|\hat P^{N,\epsilon}(m)| &\leq \frac{1}{2N\epsilon}\sum_{|k|\leq N^{2/3}}\l|\sum_{j=1}^{N^{1/3}\epsilon}\e\l(2\pi im\l[\lambda_{i_k} + j\omega_{i_k}\r]\r)\r| + \kappa_{m,N,\epsilon}\\
&=  \frac{1}{2N\epsilon}\sum_{|k|\leq N^{2/3}}\l|\sum_{j=1}^{N^{1/3}\epsilon}\e\l(2\pi ijm\omega_{i_k}\r)\r| + \kappa_{m,N,\epsilon}.
\eeqs
Now, fix $n\in\mathbb N$, and define
$$\delta(n) = \min_{q=1,\dots,n}\;\min_{p=0,\dots,q-1}\l|\l(\w\bmod1\r)-\frac pq\r|.$$ Note that $\delta(n)$ is at most $1/n$. We know $\delta(n)>0$ because $\w$ is irrational. By the second bound in~\eqref{omega-bds}, there exists $\epsilon_1$ such that for all $\epsilon<\epsilon_1$, $N>N(\epsilon)$, we have
\beq
|\omega_{i_k}-\w|=|\omega_{i_k}\bmod1-\w\bmod1|< \delta(n)/2,\quad\forall |k|\leq N^{\frac23}.
\eeq 
For such $\epsilon,N$, and all $|k|\leq N^\frac23$, we have
\beq\label{min-qp}\min_{q=1,\dots,n}\;\min_{p=0,\dots,q-1}\l|\l(\omega_{i_k}\bmod1\r)-\frac pq \r|\geq \delta(n) - \delta(n)/2 = \delta(n)/2.\eeq We show that~\eqref{min-qp} implies that for all $m=1,\dots,n$, we have $|(m\omega_{i_k})\bmod1|\geq m\delta(n)/2\geq \delta(n)/2$. First, assume without loss of generality that $\omega_{i_k}\in [0,1)$. Then since $m\omega_{i_k}\in [0,m)$, we can write 
$$(m\omega_{i_k})\bmod 1 = \min_{j=0,\dots,m-1}|m\omega_{i_k}-j|.$$ Using~\eqref{min-qp}, we then have
\beqs\min_{j=0,\dots,m-1}|m\omega_{i_k}-j|&= m\min_{j=0,\dots,m-1}\l|\omega_{i_k}-j/m\r|\\&\geq m\min_{m=1,\dots,n}\min_{j=0,\dots,m-1}\l|\omega_{i_k}-j/m\r|\geq m\delta(n)/2.\eeqs
Now we return to~\eqref{P-est}. We can use the geometric sum formula for the sum inside the absolute value, since $m\omega_{i_k}$ is not an integer. Let $C(n) = 2/\sqrt{2-2\cos(\pi\delta(n))}$. We have
\beqs\label{P-est-II}
|\hat P^{N,\epsilon}(m)| &\leq \frac{1}{2N\epsilon}\sum_{|k|\leq N^{2/3}}\frac{2}{\l|1-\e\l(2\pi im\omega_{i_k}\r)\r|}+\kappa_{m,N,\epsilon}\\
&=  \frac{1}{2N\epsilon}\sum_{|k|\leq N^{2/3}}\frac{2}{\sqrt{2-2\cos(2\pi m\omega_{i_k})}}+ \kappa_{m,N,\epsilon}\\
&\leq  C(n)N^{-1/3}\epsilon^{-1} + \kappa_{m,N,\epsilon},\quad m=1,\dots, n.
\eeqs
Finally, we apply this bound, together with $\kappa_{m,N,\epsilon} \leq \e\l(2\pi nB_{N,\epsilon}\r)-1$, in~\eqref{app:erdos-turan}. We get that for all $\epsilon<\epsilon_1$ and $N>N(\epsilon)$,
$$
\sup_A|P^{N,\epsilon}(A) - |A|| \leq C\l(\frac1n + nC(n)N^{-1/3}\epsilon^{-1}+ n\l[\e\l(2\pi nB_{N,\epsilon}\r)-1\r]\r).$$
%\frac13}\l(\frac{1}{\epsilon C(n)} + C\epsilon^2\r)n\r)
%$$ 
Since we take $N\to\infty$ before taking $\epsilon\to0$, the second summand above will converge to zero in the limit. Further, recall that $B_{N,\epsilon}=\epsilon^2N^{2/3}\|\delta(\bm{\omega})\|_\infty$. By the first bound in~\eqref{omega-bds}, $B_{N,\epsilon}$ goes to zero as $\Nepslim$, so the third summand also goes to zero. We conclude that
$$\ol{\lim_{\epsilon\to0}}\;\ol{\lim_{N\to\infty}}\sup_A|P^{N,\epsilon}(A) - |A||\leq\frac Cn.$$ But $n$ was arbitrary, so we take $n\to\infty$ to conclude.
\end{proof}

\bibliographystyle{siamplain}
\bibliography{bibliogr}

\vspace{0.4cm}
\tiny{Disclaimer: This report was prepared as an account of work sponsored by an agency of the United States Government. Neither the United States Government nor any agency thereof, nor any of their employees, makes any warranty, express or implied, or assumes any legal liability or responsibility for the accuracy, completeness, or usefulness of any information, apparatus, product, or process disclosed, or represents that its use would not infringe privately owned rights. Reference herein to any specific commercial product, process, or service by trade name, trademark, manufacturer, or otherwise does not necessarily constitute or imply its endorsement, recommendation, or favoring by the United States Government or any agency thereof. The views and opinions of authors expressed herein do not necessarily state or reflect those of the United States Government or any agency thereof.}
\end{document}